\renewcommand{\epsilon}{\varepsilon}
\renewcommand{\theta}[0]{\vartheta}
\renewcommand{\phi}[0]{\varphi}
\DeclareMathOperator{\GL}{GL}
\DeclareMathOperator{\SL}{SL}
\DeclareMathOperator{\End}{End}
\DeclareMathOperator{\Aut}{Aut}
\DeclareMathOperator{\Hol}{Hol}
\DeclareMathOperator{\NHol}{NHol}
\DeclareMathOperator{\res}{res}
\newtheorem{dummy}{Dummy}
\numberwithin{dummy}{section}
\numberwithin{figure}{section}
\newtheorem{theorem}[dummy]{Theorem}
\newtheorem{lemma}[dummy]{Lemma}
\newtheorem{prop}[dummy]{Proposition}
\theoremstyle{definition}
\newtheorem*{notationstar}{Notation}
\newtheorem{assume}[dummy]{Assumption}
\theoremstyle{remark}
\newtheorem{remark}[dummy]{Remark}
\def\imod#1{\allowbreak\mkern10mu({\operator@font mod}\,\,#1)}
\newcommand{\extp}{\@ifnextchar^\@extp{\@extp^{\,}}}
\def\@extp^#1{\mathop{\bigwedge\nolimits^{\!#1}}}
\numberwithin{equation}{section}
\begin{document}

\date{31 August 2023, 12:00 CET --- Version 1.08%
}

\title[$p$-groups of class two with a small multiple holomorph]
      {Finite $p$-groups of class two with\\ a small multiple holomorph}
      
\author{A.~Caranti}

 \address[A.~Caranti]%
  {Dipartimento di Matematica\\
   Universit\`a degli Studi di Trento\\
   via Sommarive 14\\
   I-38123 Trento\\
   Italy\\\endgraf
   ORCiD: 0000-0002-5746-9294} 

 \email{andrea.caranti@unitn.it} 
 \urladdr{https://caranti.maths.unitn.it/}

\author{Cindy (Sin Yi) Tsang}

 \address[C.~Tsang]%
  {Department of Mathematics\\
   Ochanomizu University\\
   2-1-1 Otsuka\\
   Bunkyo-ku, Tokyo\\
   Japan\\\endgraf
   ORCiD: 0000-0003-1240-8102} 

 \email{tsang.sin.yi@ocha.ac.jp} 
 \urladdr{https://sites.google.com/site/cindysinyitsang/}

\subjclass[2020]{20B35 20D15 20D45}

 \keywords{holomorph, multiple holomorph, regular subgroups, finite $p$-groups of class two, bilinear forms}

\begin{abstract}We consider the quotient group $T(G)$ of the multiple holomorph by the holomorph of a finite $p$-group $G$ of class two for an odd prime $p$. By work of the first-named author, we know that $T(G)$ contains a cyclic subgroup of order $p^{r-1}(p-1)$, where $p^r$ is the exponent of the quotient of $G$ by its center. In this paper, we shall exhibit examples of $G$ (with $r = 1$) such that $T(G)$ has order exactly $p-1$, which is as small as possible.
\end{abstract}

 \thanks{
  The first-named author is a member of GNSAGA--INdAM, Italy, and acknowledges support from the Department of
Mathematics of the University of Trento.  
\\Both authors acknowledge that
this research was supported by JSPS KAKENHI Grant Number 21K20319. We also thank the referee for helpful comments.}

\maketitle

\thispagestyle{empty}

\section{Introduction}

Let $G$ be any group, and write $S(G)$ for the group of permutations on $G$, where maps are composed from left to right.  Let
\[ \rho : G\rightarrow S(G);\,\ x^{\rho(y)} = xy\]
denote the right regular representation of $G$. The  \emph{holomorph} of $G$ may be defined as the subgroup
\[ \Hol(G) = \Aut(G)\rho(G) = \Aut(G)\ltimes \rho(G)\]
of $S(G)$, or equivalently, the normalizer of $\rho(G)$ in $S(G)$. The \emph{multiple holomorph} of $G$, which is denoted as $\mathrm{NHol}(G)$, is in turn defined to be the normalizer of $\Hol(G)$ in $S(G)$. It is easy to see that the quotient
\[ T(G) = \NHol(G)/\Hol(G)\]
acts regularly, via conjugation, on the regular subgroups of $S(G)$  that are isomorphic to $G$ and have normalizer equal to $\Hol(G)$. These subgroups are exactly the normal regular subgroups of $\Hol(G)$ isomorphic to $G$ when $G$ is finite, which is the case of interest of this paper.

The group $T(G)$ was first studied in \cite{Miller} and has attracted more attention recently since \cite{Kohl} was published. The structure of $T(G)$ has been computed for certain families of groups $G$, such as 
\begin{itemize}
\item finitely generated abelian groups (\cite{Mills},\cite{fg}),
\item dihedral and generalized quaternion groups (\cite{Kohl}),
\item finite groups of squarefree order (\cite{squarefree}).
\end{itemize}
In this paper, we shall consider finite $p$-groups $G$ of class two, where $p$ always denotes an odd prime. In this case, by \cite[Proposition 3.1]{class2}, we know that  
$T(G)$ has a cyclic subgroup of order $p^{r-1}(p-1)$, where $p^r$ is the exponent of the quotient of $G$ by its center. We shall say that the order of $T(G)$ is \emph{minimal} when it is exactly $p^{r-1}(p-1)$.

\begin{notationstar}For any group $G$, we employ the following notation:
\begin{itemize}
\item $G'=\mbox{the commutator subgroup of $G$}$.
\item $Z(G)=\mbox{the center of $G$}$.
\item $\mathrm{Frat}(G)=\mbox{the Frattini subgroup of $G$}$.
\item $\Aut_c(G)=\mbox{the kernel of the natural homomorphism}$
\[\Aut(G)\rightarrow \Aut(G/Z(G)).\]
Also write $\Aut^c(G)$ for the image of this homomorphism, which we shall sometimes identify as $\Aut(G)/\Aut_c(G)$.
\item $\Aut_z(G)=\mbox{the kernel of the natural homomorphism}$
\[  \Aut(G)\rightarrow \Aut(Z(G)).\]
\item In the case that $G'=Z(G)$, we have $\Aut_c(G)\subseteq \Aut_z(G)$, and this allows us to define a natural homomorphism
\[ \Aut(G/G')\rightarrow \Aut(G');\,\ \alpha\mapsto \hat{\alpha}.\]
Here $\hat{\alpha}$ is the element induced by any lift of $\alpha$ in $\Aut(G)$.
\end{itemize}
For any $x,y\in G$, we shall also write $x^y = y^{-1}xy$ and $[x,y] = x^{-1}x^y$.
\end{notationstar}

Let us now return to finite $p$-groups $G$ of class two. In this case, by \cite[Section 2]{class2}, we may study certain elements of $T(G)$ via the use of bilinear forms. In our previous work \cite[Section 4]{LMH}, we exploited this approach and showed that
\begin{equation}\label{T(G) pre} T(G)= \mathcal{S}\rtimes \mathcal{S}'\mbox{ when $G' = Z(G)$ and $\Aut^c(G)=1$}.\end{equation}
Here, loosely speaking, the subgroups $\mathcal{S}$ and $\mathcal{S}'$ consist of the elements corresponding to symmetric and anti-symmetric forms, respectively. In Section \ref{bilinear form sec}, we shall generalize this fact and show that (\ref{T(G) pre}) is still valid even when $\Aut^c(G)\neq 1$, provided that there is no non-trivial $\Aut^c(G)$-equivariant homomorphism from $G/G'$ to $\Aut^c(G)$ (Assumption \ref{assumption}).

In \cite{LMH}, we considered a family of $p$-groups $G$ of class two which may be constructed from linear maps $\pi : V\rightarrow\Lambda^2V$. Here $V$ is a finite dimensional vector space over $\mathbb{F}_p$ and $\Lambda^2V$ denotes its exterior square. The construction is from \cite{CarIJM}, to be reviewed in Section \ref{group section}. Let us note that the constructed $p$-groups $G$ have order 
\[ |G| = p^{n + {n\choose 2}}\mbox{ when }n = \dim_{\mathbb{F}_p}(V),\]
and are \emph{special} in the sense that
\[G' = Z(G) = \mathrm{Frat}(G).\]
By \cite{simple-constr}, the condition $\Aut^c(G)=1$ may be realized by certain $\pi$ of full rank for any $n\geq 4$. Using these special $p$-groups $G$ and (\ref{T(G) pre}), we proved in \cite{LMH}  that $T(G)$ can be made arbitrarily large.

In the present paper, we consider the opposite direction and search for examples of $G$ such that $T(G)$ is small (or even of minimal order). To get such examples, we naturally want $\Aut^c(G)$ to be large (see the discussion after Remark \ref{remark}), and this is more likely to happen when $\pi$ has small rank (see the discussion after (\ref{iden Aut})). 

The extreme case is when $\pi$ is the trivial linear map (when the rank is zero). The associated group $G$ as defined in (\ref{Gpi}) is the free $p$-group of class two and exponent $p$. We have that
\[ \Aut^c(G) = \Aut(G/G') \]
is as large as possible, and it is already known by \cite[Theorem 5.2]{class2} that $T(G)$ has minimal order $p-1$.

The next natural case to consider is when the rank of $\pi$ is one. For $n=2$, the associated $G$ has order $p^3$, and we already know by \cite[Proposition 5.1]{class2} that  $T(G)$ has minimal order $p-1$. For $n=3,4$, as we shall show in Propositions \ref{rank one prop'} and \ref{rank one prop}, respectively, up to a choice of basis there are only two and three possibilities of $\pi$ of rank one.  It turns out that $T(G)$ has minimal order $p-1$ in four of the cases, and $T(G)$ has order $(p-1)^2$ in the remaining case. 
Our main result is the following:

\begin{theorem}\label{thm1} The following holds.
\begin{enumerate}[label = $(\alph*)$]
\item Let $p\geq 5$ be any prime. For the group
\begin{align*}
G =\Bigg\langle x_1,x_2,x_3: &\, [[x_i,x_j],x_k] = 1\mbox{ for }1\leq i,j,k\leq 3,\\[-10pt]
&x_2^p = x_3^p= 1,\, x_1^p = [x_1,x_2]\Bigg\rangle
\end{align*}
of order $p^6$, we have $T(G)\simeq \mathbb{F}_p^\times$.
\item Let $p\geq 3$ be any prime. For the group
\begin{align*}
G =\Bigg\langle x_1,x_2,x_3: &\, [[x_i,x_j],x_k] = 1\mbox{ for }1\leq i,j,k\leq 3,\\[-10pt]
&x_2^p = x_3^p= 1,\, x_1^p = [x_2,x_3]\Bigg\rangle
\end{align*}
of order $p^6$, we have $T(G)\simeq \mathbb{F}_p^\times$.
\item Let $p\geq 5$ be any prime. For the group
\begin{align*}
G =\Bigg\langle x_1,x_2,x_3,x_4 : &\, [[x_i,x_j],x_k] = 1\mbox{ for }1\leq i,j,k\leq 4,\\[-10pt]
&x_2^p = x_3^p=x_4^p = 1,\, x_1^p = [x_1,x_2]\Bigg\rangle
\end{align*}
of order $p^{10}$, we have $T(G)\simeq \mathbb{F}_p^\times$.
\item Let $p\geq 3$ be any prime. For the group
\begin{align*}
G =\Bigg\langle x_1,x_2,x_3,x_4 : &\, [[x_i,x_j],x_k] = 1\mbox{ for }1\leq i,j,k\leq 4,\\[-10pt]
&x_2^p = x_3^p=x_4^p = 1,\, x_1^p = [x_3,x_4]\Bigg\rangle
\end{align*}
of order $p^{10}$, we have $T(G)\simeq \mathbb{F}_p^\times$.
\item Let $p\geq 5$ be any prime. For the group
\begin{align*}
G =\Bigg\langle x_1,x_2,x_3,x_4 : &\, [[x_i,x_j],x_k] = 1\mbox{ for }1\leq i,j,k\leq 4,\\[-10pt]
&x_2^p = x_3^p=x_4^p = 1,\, x_1^p = [x_1,x_2][x_3,x_4]\Bigg\rangle
\end{align*}
of order $p^{10}$, we have $T(G)\simeq\mathbb{F}_p^\times \times \mathbb{F}_p^\times$.
\end{enumerate}
\end{theorem}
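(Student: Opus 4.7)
The plan is to apply the decomposition $T(G) = \mathcal{S} \rtimes \mathcal{S}'$ of (\ref{T(G) pre}), as generalised in Section \ref{bilinear form sec}, and then compute the two factors explicitly in each of the five cases. All five groups arise from the construction of Section \ref{group section} applied to a linear map $\pi \colon V \to \Lambda^2 V$ of rank one over $V = \F_p^n$, and they satisfy $G' = Z(G) = \Frat(G)$ with $G/Z(G)$ of exponent $p$. Hence $r = 1$, and the cyclic subgroup of order $p - 1$ guaranteed by \cite[Proposition 3.1]{class2} already provides the required lower bound in (a)--(d); the work lies entirely in establishing the matching upper bounds, together with the extra $\F_p^\times$ in case (e).

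The first step is to verify Assumption \ref{assumption}, namely that no non-trivial $\Aut^c(G)$-equivariant homomorphism $G/G' \to \Aut_c(G)$ exists. Because $\pi$ has rank one, $\Aut^c(G)$ identifies with a large subgroup of $\GL(V)$ stabilising the one-dimensional image of $\pi$, and in each of (a)--(e) this stabiliser can be written down explicitly as a group of block matrices together with an induced scaling on $\pi(V)$. A comparison of the weights of this action on $G/G'$ and on $\Aut_c(G) \cong \Hom(V, Z(G))$ should rule out any equivariant map, so that the decomposition $T(G) = \mathcal{S} \rtimes \mathcal{S}'$ becomes available.

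It then remains to classify the symmetric and anti-symmetric bilinear forms on $V$ that are compatible with $\pi$ in the sense of Section \ref{bilinear form sec}. Since the image of $\pi$ is one-dimensional, compatibility reduces to a single linear condition coupling a distinguished vector of $V$ to a distinguished element of $\Lambda^2 V$. A direct calculation should show that $\mathcal{S}$ is generated by one non-zero symmetric form, giving $\mathcal{S} \simeq \F_p^\times$, and that $\mathcal{S}'$ is trivial in (a)--(d). The distinctive feature of case (e) is that $\pi(x_1) = x_1 \wedge x_2 + x_3 \wedge x_4$ is an \emph{indecomposable} (symplectic) $2$-form on $\langle x_1, x_2, x_3, x_4 \rangle$; this extra non-degeneracy should support a non-trivial anti-symmetric compatible form, producing the extra $\F_p^\times$ factor in $\mathcal{S}'$, whereas in (d) the decomposable form $x_3 \wedge x_4$ decouples entirely from $x_1$ and no such form exists.

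The main obstacle is expected to be the explicit linear-algebra bookkeeping needed to rule out exotic compatible bilinear forms, together with explaining the hypothesis $p \geq 5$ in (a), (c) and (e). That hypothesis presumably excludes a characteristic-$3$ coincidence among the weights under which $\Aut^c(G)$ acts, which in characteristic $3$ would create spurious symmetric or anti-symmetric compatible forms. A sensible order of attack is to handle the ``decoupled'' cases (b) and (d) first, then adapt the argument to the asymmetric cases (a) and (c) where $x_1$ appears on both sides of $\pi$, and finally treat (e) by pinning down the additional anti-symmetric invariant supplied by the symplectic form.
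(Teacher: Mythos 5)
Your overall strategy matches the paper's: verify Assumption \ref{assumption} so that $T(G) = \mathcal{S}\rtimes\mathcal{S}'$ applies, then compute the two factors via the bilinear-form dictionary. However, there is a genuine error in how you attribute the $\mathbb{F}_p^\times$. You claim that $\mathcal{S}$ is ``generated by one non-zero symmetric form, giving $\mathcal{S}\simeq\mathbb{F}_p^\times$,'' and that $\mathcal{S}'$ is trivial in (a)--(d). This is backwards, and in fact impossible: by Theorem \ref{pre thm}, $\mathcal{S}\simeq S$, and $S$ is a group of $G'$-valued bilinear forms under pointwise multiplication, hence an elementary abelian $p$-group; a non-trivial $S$ would force $p\mid |T(G)|$, and $S$ can never be isomorphic to $\mathbb{F}_p^\times$ (order $p-1$). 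Moreover, the cyclic subgroup of order $p-1$ that you yourself invoke as the lower bound comes from the power maps $\theta_\kappa$ of Remark \ref{remark}, and these correspond to the \emph{anti-symmetric} forms $\Delta_{[\lambda]}$; so $\mathcal{S}'$ is certainly not trivial. The correct accounting, which is the content of Propositions \ref{S=1} and \ref{S' prop}, is that $S=1$ in all five cases, that $S'$ consists exactly of the scalar forms $\Delta_{[\lambda]}$ in (a)--(d) (with the extra family $\Delta_{[\kappa]}^*$ in (e)), and that $T(G)\simeq\res(\mathcal{S}')$; the multiplicative group $\mathbb{F}_p^\times$ then emerges because the group law on $\mathcal{S}'$ is not the pointwise one on $S'$ but the twisted law of Proposition \ref{multiplication prop}, realized through $\res$ and the parametrization $\lambda\mapsto(1+2\lambda)^{-1}$.

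A secondary concern is your description of the compatibility condition as ``a single linear condition coupling a distinguished vector of $V$ to a distinguished element of $\Lambda^2V$.'' The condition (\ref{Delta2}) is equivariance under the entire group $\Aut^c(\pi)$, which for rank-one $\pi$ is large; the paper handles it by decomposing $S^2V$ and $\Lambda^2V$ into irreducible modules over the reductive part $Q$ of $\Aut^c(\pi)=P\rtimes Q$ and comparing components, then using the unipotent part $P$ to kill the remaining parameters. Your guess about $p\geq 5$ is in the right spirit (a coincidence $s=s^{-1}$ for all $s\in\mathbb{F}_3^\times$), but it is needed in verifying Assumption \ref{assumption} for cases (a), (c), (e), not in ruling out compatible forms. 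As written, the proposal would conclude $T(G)\simeq C_p$ in cases (a)--(d), so the misattribution is not cosmetic; the computation of both $S$ and $\res(\mathcal{S}')$ must be redone along the lines just indicated.
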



\section{Multiple holomorph via bilinear forms}\label{bilinear form sec}

Let $p$ be an odd prime and let $G$ be a finite $p$-group of class two for which $G'=Z(G)$. Notice that then $\Aut_c(G)\subseteq \Aut_z(G)$. In this case, by \cite{LMH}, the structure of $T(G)$ may be studied using certain bilinear forms, as follows.

First, clearly any regular subgroup of $\Hol(G)$ must take the form
\[ N_\gamma = \{\gamma(x)\rho(x): x\in G\},\mbox{ where }\gamma :G \rightarrow\Aut(G)\]
is some suitable map. As shown in \cite[Theorem 5.2]{perfect}, the set $N_\gamma$ is a normal regular subgroup of $\Hol(G)$ exactly when
\[ \begin{cases}
 \gamma(xy) =\gamma(y)\gamma(x)\\
 \gamma(x^\beta) = \gamma(x)^\beta
 \end{cases}
 \mbox{for all }x,y\in G\mbox{ and } \beta\in \Aut(G),\]
namely when $\gamma$ is an $\Aut(G)$-equivariant anti-homomorphism. In our setting of $G$, it follows from \cite[Proposition 2.2]{class2} that such $\gamma$ with image lying inside $\Aut_c(G)$ may be parametrized by $G'$-valued bilinear forms on $G/G'$ which are also $\Aut(G)$-equivariant.

\begin{prop}
The following data are equivalent.
\begin{enumerate}[label = $(\arabic*)$]
\item A normal regular subgroup $N$ of $\Hol(G)$ such that its projection onto $\Aut(G)$ along $\rho(G)$ is contained in $\Aut_c(G)$.
\item An anti-homomorphism $\gamma : G\rightarrow\Aut_c(G)$ such that
\[\gamma(x^\beta) = \gamma(x)^\beta\]
for all $x\in G$ and $\beta\in \Aut(G)$.
\item A bilinear form $\Delta : G/G'\times G/G'\rightarrow G'$ such that
\begin{equation}\label{Delta1} \Delta(u^\beta,v^\beta) = \Delta(u,v)^\beta \end{equation}
for all $u,v\in G/G'$ and $\beta\in \Aut(G)$.
\end{enumerate}
The data $(1),(2)$ are related via $N = N_\gamma$, and $(2),(3)$ are related via  
\begin{equation}\label{gamma-Delta}x^{\gamma(y)} = x\Delta(xG',yG')\end{equation}
for all $x,y\in G$.
\end{prop}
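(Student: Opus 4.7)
My plan is to combine two existing results: \cite[Theorem 5.2]{perfect} for the equivalence $(1) \Leftrightarrow (2)$, and \cite[Proposition 2.2]{class2} together with the standing hypothesis $G' = Z(G)$ for $(2) \Leftrightarrow (3)$. For $(1) \Leftrightarrow (2)$, as the paper has already observed, \cite[Theorem 5.2]{perfect} sets up a bijection $\gamma \leftrightarrow N_\gamma$ between $\Aut(G)$-equivariant anti-homomorphisms $\gamma : G \to \Aut(G)$ and normal regular subgroups of $\Hol(G)$. Since the projection of $N_\gamma$ onto $\Aut(G)$ along $\rho(G)$ is precisely $\gamma(G)$, requiring this projection to lie in $\Aut_c(G)$ corresponds exactly to requiring $\gamma$ to take values in $\Aut_c(G)$, so this equivalence is immediate.

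For $(2) \Leftrightarrow (3)$, the first step is to recall the key structural fact (essentially \cite[Proposition 2.2]{class2}, valid because $G' = Z(G)$ forces $\Aut_c(G) \subseteq \Aut_z(G)$): every $\alpha \in \Aut_c(G)$ has the form $\alpha(x) = x \cdot \delta_\alpha(x)$ with $\delta_\alpha$ a homomorphism $G/G' \to G'$, and the map $\alpha \mapsto \delta_\alpha$ is an isomorphism $\Aut_c(G) \cong \Hom(G/G', G')$. In particular $\Aut_c(G)$ is abelian, so any anti-homomorphism $\gamma : G \to \Aut_c(G)$ is automatically a homomorphism and is therefore trivial on $G'$. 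I will then set
\[ \Delta(xG', yG') = \delta_{\gamma(y)}(x), \]
obtaining a well-defined map $G/G' \times G/G' \to G'$ that is bilinear: linearity in the first argument is the homomorphism property of $\delta_{\gamma(y)}$, and linearity in the second follows from $\delta_{\gamma(yy')} = \delta_{\gamma(y)} + \delta_{\gamma(y')}$, which comes from $\gamma$ being a homomorphism into the additive group $\Hom(G/G', G')$.

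For the $\Aut(G)$-equivariance, I will verify the elementary identity $\delta_{\alpha^\beta}(x^\beta) = \delta_\alpha(x)^\beta$ for $\alpha \in \Aut_c(G)$ and $\beta \in \Aut(G)$ by a direct computation. Combined with the equivariance $\gamma(y^\beta) = \gamma(y)^\beta$ of $\gamma$, this immediately yields $\Delta(u^\beta, v^\beta) = \Delta(u,v)^\beta$. The reverse direction is the same chain of observations read backwards: given $\Delta$ satisfying (3), define $\gamma(y) \in \Aut_c(G)$ by $x^{\gamma(y)} = x\Delta(xG', yG')$, and then bilinearity of $\Delta$ gives the anti-homomorphism property of $\gamma$, while equivariance of $\Delta$ gives the equivariance of $\gamma$. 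The main obstacle will be purely notational---keeping the conventions for the action of automorphisms on $G$ and for conjugation $\alpha^\beta = \beta^{-1}\alpha\beta$ in $\Aut(G)$ consistent throughout. There are no conceptual difficulties beyond the cited results, and the equivalence falls out by unpacking definitions.
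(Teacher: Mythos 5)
Your proposal is correct and follows the same route the paper takes: the paper's own justification for this proposition is precisely the two citations you invoke (\cite[Theorem 5.2]{perfect} for the bijection $\gamma \leftrightarrow N_\gamma$ and \cite[Proposition 2.2]{class2}, via $\Aut_c(G)\cong\Hom(G/G',G')$ under $G'=Z(G)$, for passing between $\gamma$ and $\Delta$), with the remaining verifications left as routine. Your fleshed-out details (well-definedness, bilinearity from the (anti-)homomorphism property, and the identity $\delta_{\alpha^\beta}(x^\beta)=\delta_\alpha(x)^\beta$ giving equivariance) are all sound.
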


Not every $\Aut(G)$-equivariant anti-homomorphism  $\gamma : G\rightarrow\Aut(G)$ would have image lying inside $\Aut_c(G)$ in general. But for $x\in G'$, we know from \cite[Lemma 1.7]{class2} that $\gamma(x)$ is always an inner automorphism. Since $G$ is of class two, all inner automorphisms are central. It follows that $\gamma$ induces a well-defined map
\[ \overline{\gamma}: G/G'\rightarrow \Aut^c(G);\,\ \overline{\gamma}(xG') = \gamma(x)\Aut_c(G).\]
This map is clearly an anti-homomorphism, or simply homomorphism because the domain $G/G'$ is abelian, satisfying
\[ \overline{\gamma}(v^\alpha) = \overline{\gamma}(v)^\alpha\]
for all $v\in G/G'$ and $\alpha\in\Aut^c(G)$, namely it is an $\Aut^c(G)$-equivariant homomorphism. We note that the image of $\gamma$ lies inside $\Aut_c(G)$ if and only if the induced map $\overline{\gamma}$ is trivial.

In our setting of $G$, we have the inclusion $\Aut_c(G)\subseteq \Aut_z(G)$, and so the condition (\ref{Delta1}) is vacuous when $\beta\in \Aut_c(G)$. This means that we only need to consider the action of $\Aut^c(G)$ here.

For simplicity, henceforth, let us further assume the following:

\begin{assume}\label{assumption}There is no non-trivial $\Aut^c(G)$-equivariant homomorphism from $G/G'$ to $\Aut^c(G)$.\end{assume}

From the above discussion, we then deduce that:

\begin{prop}\label{gamma prop new}
Under Assumption \ref{assumption}, the following data are equivalent.
\begin{enumerate}[label = $(\arabic*)$]
\item A normal regular subgroup $N$ of $\Hol(G)$.
\item A bilinear form $\Delta : G/G'\times G/G'\rightarrow G'$ such that
\begin{equation}\label{Delta2} \Delta(u^\alpha,v^\alpha) = \Delta(u,v)^{\hat{\alpha}} \end{equation}
for all $u,v\in G/G'$ and $\alpha\in \Aut^c(G)$. 
\end{enumerate}
\end{prop}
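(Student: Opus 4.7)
The plan is to derive Proposition~\ref{gamma prop new} from the preceding proposition by showing that Assumption~\ref{assumption} forces every normal regular subgroup $N$ of $\Hol(G)$ into the special situation treated there, and then observing that condition~\eqref{Delta1} and condition~\eqref{Delta2} amount to the same requirement in our setting.

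First, I would start with an arbitrary normal regular subgroup $N$ of $\Hol(G)$. Such an $N$ has the form $N = N_\gamma$ for some $\Aut(G)$-equivariant anti-homomorphism $\gamma : G \rightarrow \Aut(G)$, this being the content of the characterization recalled just before the earlier proposition. The discussion immediately preceding Assumption~\ref{assumption} shows that $\gamma$ induces an $\Aut^c(G)$-equivariant homomorphism $\overline{\gamma} : G/G' \rightarrow \Aut^c(G)$, and that the image of $\gamma$ lies in $\Aut_c(G)$ precisely when $\overline{\gamma}$ is trivial. Assumption~\ref{assumption} tells us that $\overline{\gamma}$ must indeed be trivial, so item~(1) of Proposition~\ref{gamma prop new} is equivalent to the restricted item~(1) of the earlier proposition, which already produces a bilinear form $\Delta$ satisfying~\eqref{Delta1}.

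It remains to check that~\eqref{Delta1} and~\eqref{Delta2} are equivalent conditions on such a $\Delta$. This is the routine part: the automorphisms in $\Aut_c(G)$ act trivially on $G/G'$ by definition, and trivially on $G' = Z(G)$ thanks to the inclusion $\Aut_c(G) \subseteq \Aut_z(G)$, so both sides of~\eqref{Delta1} are unaffected by $\beta \in \Aut_c(G)$ (a point already noted in the excerpt). Hence~\eqref{Delta1} descends to a condition indexed by $\alpha = \beta\Aut_c(G) \in \Aut^c(G)$; on the left-hand side the action of $\beta$ on $G/G'$ is exactly the action of $\alpha$, while on the right-hand side the action of $\beta$ on $G' = Z(G)$ via the natural map $\Aut(G) \rightarrow \Aut(Z(G))$ coincides with $\hat{\alpha}$ by the very definition of $\hat{\alpha}$. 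This gives~\eqref{Delta1} $\Leftrightarrow$~\eqref{Delta2}, completing the equivalence.

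There is no real obstacle here, as the heavy lifting was done in the preceding proposition: the only substantive step is recognizing that Assumption~\ref{assumption} is tailor-made to kill the potentially non-inner component $\overline{\gamma}$ of $\gamma$, after which the equivalence of the two equivariance conditions is a matter of unpacking definitions and using $\Aut_c(G) \subseteq \Aut_z(G)$.
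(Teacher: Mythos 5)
Your argument is correct and follows essentially the same route as the paper, which derives this proposition directly from the preceding discussion: Assumption~\ref{assumption} forces the induced map $\overline{\gamma}$ to be trivial, hence the image of $\gamma$ lies in $\Aut_c(G)$, and the inclusion $\Aut_c(G)\subseteq\Aut_z(G)$ makes \eqref{Delta1} vacuous on $\Aut_c(G)$ so that it descends to \eqref{Delta2}. Nothing is missing.
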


Before proceeding, let us set up some notation. Define
\[ B =  \{\mbox{bilinear forms $\Delta :G/G'\times G/G'\rightarrow G'$ satisfying (\ref{Delta2})}\},\]
which is an abelian group under pointwise multiplication in $G'$. Let $S$ and $S'$, respectively, be the subgroups of $B$ consisting of the bilinear forms which are symmetric and anti-symmetric, namely
\begin{align*}
S & = \{\Delta\in B \mid \Delta(u,v) = \Delta(v,u) \mbox{ for all }u,v\in G/G'\},\\
S' & = \{\Delta\in B\mid \Delta(u,v) = \Delta(v,u)^{-1}\mbox{ for all }u,v\in G/G'\}.
\end{align*}Every $\Delta\in B$ may be decomposed as
\[ \Delta(u,v) = (\Delta(u,v)\Delta(v,u))^{1/2} \cdot (\Delta(u,v)\Delta(v,u)^{-1})^{1/2}.\]
We then deduce that $B = SS'$ with $S\cap S'=\{\Delta_0\}$ is an inner direct product of $S$ and $S'$, where $\Delta_0$ denotes the trivial bilinear form which sends everything to the identity element.

Each bilinear form $\Delta\in B$ gives rise to an $\Aut(G)$-equivariant anti-homomorphism $\gamma : G\rightarrow \Aut(G)$ via (\ref{gamma-Delta}). Let us write
 $N_\Delta = N_\gamma$ for the associated normal regular subgroup of $\Hol(G)$, which corresponds to an element of $T(G)$ if and only if $N_\Delta$ is isomorphic to $G$. We know by \cite[Proposition 3.1]{LMH} that the isomorphism class of $N_\Delta$ depends only on the anti-symmetric component of $\Delta$. Since $N_{\Delta_0}$ is simply $\rho(G)$, as shown in \cite[Corollary 3.2]{LMH}, we have $N_\Delta\simeq G$ for all symmetric forms $\Delta \in S$. For anti-symmetric forms $\Delta\in S'$, things are much more complicated and it seems to us that not much can be said in general, except the few facts proven in \cite[Proposition 3.3]{LMH} (also see Remark \ref{remark}). But for a special family (\ref{Gpi}) of groups $G$ that we shall consider in Section \ref{group section}, there is a linear algebra method to determine whether $N_\Delta$ is isomorphic to $G$ for anti-symmetric forms $\Delta \in S'$ (see Proposition \ref{criterion} for the criterion).
 
Given any $\theta\Hol(G) \in T(G)$, consider its corresponding normal regular subgroup $\rho(G)^\theta$ of $\Hol(G)$. By Proposition \ref{gamma prop new}, there is a unique bilinear form $\Delta_\theta\in B$ for which $\rho(G)^\theta = N_{\Delta_\theta}$. Let us put
\begin{align*}
\mathcal{S} & = \{\theta\Hol(G)\in T(G)\mid\Delta_\theta\in S\},\\
\mathcal{S}' & = \{\theta\Hol(G) \in T(G) \mid \Delta_\theta\in S'\}.
\end{align*}
It was shown in \cite[Section 4]{LMH} that
\[ T(G)= \mathcal{S}\rtimes \mathcal{S}'\mbox{ whenever }\Aut^c(G) =1.\]
But the condition $\Aut^c(G) =1$ was imposed there only for the sake of simplicity, and essentially the same argument shows that this in fact holds more generally. Some modifications do need to be made, so let us explain this carefully.

Given any $\theta\Hol(G)\in T(G)$ with $1^\theta = 1$, the exact same argument as in \cite{LMH} shows that $\theta$ induces, via restriction, automorphisms
\begin{equation}\label{res def} \res_c(\theta) : G/G' \rightarrow G/G'\mbox{ and }\res_z(\theta): G'\rightarrow G'. \end{equation}
Notice that the coset representative $\theta$ is unique up to $\Aut(G)$. In the case that $\Aut^c(G)=1$, both of the above are then independent of the choice of $\theta$, so as in \cite{LMH} one gets a well-defined homomorphism
\begin{align}\label{old res}
\res: T(G) &\rightarrow \Aut(G/G')\times \Aut(G')\\\notag
 \theta\Hol(G) &\mapsto (\res_c(\theta),\res_z(\theta)).
 \end{align}
 However, this is no longer well-defined when $\Aut^c(G)\neq 1$ and we need to change the range of $\res$ to avoid this problem. 

Let $\Gamma(G)$ denote the image of the  
 natural homomorphism
\begin{align*}
\Aut(G)&\rightarrow \Aut(G/G')\times \Aut(G')\\
\beta&\mapsto (\res_c(\beta),\res_z(\beta))
\end{align*}
induced by restrictions. Equivalently, in the current setting
\[ \Gamma(G) = \{(\alpha,\hat{\alpha}) : \alpha \in \Aut^c(G)\}.\]
Let $\mathrm{Norm}(\Gamma(G))$ denote its normalizer in $\Aut(G/G')\times \Aut(G')$.
 
\begin{lemma}\label{res image}For any $\theta\Hol(G)\in T(G)$ with $1^\theta = 1$, we have
\[ (\res_c(\theta),\res_z(\theta))\in \mathrm{Norm}(\Gamma(G)).\]
\end{lemma}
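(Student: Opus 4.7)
The plan is to show that conjugation by the pair $(\res_c(\theta),\res_z(\theta))$ inside $\Aut(G/G')\times\Aut(G')$ sends $\Gamma(G)$ into itself, and then to run the same argument with $\theta^{-1}$ in place of $\theta$ to obtain the reverse inclusion. The structural observation driving the whole proof is that $\theta$ fixes $1$ and normalizes $\Hol(G)$, so it must normalize the point-stabilizer of $1$ in $\Hol(G)$, which is precisely $\Aut(G)$. In particular, for every $\beta \in \Aut(G)$, the conjugate $\theta^{-1}\beta\theta$ again lies in $\Aut(G)$.

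First I would recall, via the argument in \cite{LMH} invoked just before (\ref{res def}), that $\theta$ preserves $G'$ setwise and permutes the $G'$-cosets in $G$; the same holds for every $\beta \in \Aut(G)$ since $G'$ is characteristic. A direct verification from the defining formula $(xG')^{\res_c(\sigma)} = x^\sigma G'$, and the analogous one for $\res_z$, then gives the multiplicativity
\[ \res_c(\sigma_1\sigma_2) = \res_c(\sigma_1)\res_c(\sigma_2), \qquad \res_z(\sigma_1\sigma_2) = \res_z(\sigma_1)\res_z(\sigma_2), \]
valid for any permutations $\sigma_1,\sigma_2$ of $G$ that fix $1$ and preserve $G'$ setwise. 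Both $\theta$ and any $\beta \in \Aut(G)$ satisfy this, so the formula applies to all the products we will need to compute.

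Given $(\alpha,\hat{\alpha})\in \Gamma(G)$, I would then pick any $\beta\in\Aut(G)$ with $\res_c(\beta) = \alpha$, which forces $\res_z(\beta) = \hat{\alpha}$. By the key observation, $\beta':=\theta^{-1}\beta\theta$ again lies in $\Aut(G)$, and applying the multiplicativity twice yields
\[ \res_c(\beta') = \res_c(\theta)^{-1}\,\alpha\,\res_c(\theta), \qquad \res_z(\beta') = \res_z(\theta)^{-1}\,\hat{\alpha}\,\res_z(\theta). \]
Since $\beta'\in\Aut(G)$, the second entry is automatically the hat of the first, so the displayed pair lies in $\Gamma(G)$, proving that $(\res_c(\theta),\res_z(\theta))$ normalizes $\Gamma(G)$ on one side; the other side is identical with $\theta^{-1}$ in place of $\theta$. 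There is no real obstacle in the argument: the content of the lemma is essentially organizational, amounting to the correct enlargement of the codomain of $\res$ now that the coset representative $\theta$ is determined only modulo $\Aut(G)$ rather than being canonical.
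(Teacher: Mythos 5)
Your proof is correct and follows essentially the same route as the paper: observe that $\theta$ fixes $1$ and normalizes $\Hol(G)$, hence normalizes $\Aut(G)$, so that $(\res_c(\theta^{-1}\beta\theta),\res_z(\theta^{-1}\beta\theta))$ is the conjugate of $(\res_c(\beta),\res_z(\beta))$ and lies in $\Gamma(G)$. The only difference is that you spell out the multiplicativity of $\res_c,\res_z$ and the reverse inclusion via $\theta^{-1}$, both of which the paper leaves implicit.
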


\begin{proof}Note that $\theta$ is an element of $\NHol(G)$. This means that $\theta$ normalizes $\Hol(G)$, but it fixes the identity, so it also normalizes $\Aut(G)$. For any $\beta\in \Aut(G)$, we then have $\theta^{-1}\beta\theta\in \Aut(G)$ and so
\begin{align*}
&(\res_c(\theta),\res_z(\theta))^{-1}(\res_c(\beta),\res_z(\beta))(\res_c(\theta),\res_z(\theta)) \\
&\hspace{5cm}=
(\res_c(\theta^{-1}\beta\theta),\res_z(\theta^{-1}\beta\theta))
\end{align*}
is again an element of $\Gamma(G)$.
\end{proof}

Returning to the discussion in (\ref{res def}), since the $\theta$ there is unique up to an element of $\Aut(G)$, the coset
\[ (\res_c(\theta) ,\res_z(\theta))\Gamma(G)\]
does not depend on the choice of $\theta$. Thus, by Lemma \ref{res image}, we obtain a well-defined homomorphism
\begin{align}\label{new res}
 \res : T(G) &\rightarrow \mathrm{Norm}(\Gamma(G))/\Gamma(G)\\\notag
 \theta\Hol(G)&\mapsto (\res_c(\theta),\res_z(\theta)) \Gamma(G).
 \end{align}
 In the case that $\Aut^c(G)=1$, the subgroup $\Gamma(G)$ is trivial and we get back the map (\ref{old res}) that we had in \cite{LMH}.

\begin{lemma}\label{action lemma}
For any $\Delta\in B$ and $(\alpha_c,\alpha_z)\in \mathrm{Norm}(\Gamma(G))$, define
\[ \Delta^{(\alpha_c,\alpha_z)}(u,v) = \Delta(u^{\alpha_c^{-1}},v^{\alpha_c^{-1}})^{\alpha_z}\]
for all $u,v\in G/G'$. Then $\Delta^{(\alpha_c,\alpha_z)}$ also belongs to $B$.
\end{lemma}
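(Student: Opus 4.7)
The plan is to verify the two defining properties of membership in $B$: bilinearity and the equivariance condition (\ref{Delta2}). Bilinearity is immediate because $\Delta^{(\alpha_c,\alpha_z)}$ is built from $\Delta$ by precomposing each argument with the linear map $\alpha_c^{-1}$ on $G/G'$ and postcomposing with the linear map $\alpha_z$ on $G'$, both of which preserve bilinearity. So the real content is verifying (\ref{Delta2}).

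To check (\ref{Delta2}), fix an arbitrary $\alpha \in \Aut^c(G)$. The normalizer hypothesis on $(\alpha_c,\alpha_z)$ is what unlocks everything: since $(\alpha_c^{-1},\alpha_z^{-1}) \in \mathrm{Norm}(\Gamma(G))$ as well, conjugation of $(\alpha,\hat{\alpha}) \in \Gamma(G)$ by it produces another element of $\Gamma(G)$, that is, there exists a (necessarily unique) $\beta \in \Aut^c(G)$ with
\[ \alpha_c\,\alpha\,\alpha_c^{-1} = \beta \quad\text{and}\quad \alpha_z\,\hat{\alpha}\,\alpha_z^{-1} = \hat{\beta}. \]
The crucial feature is that a single $\beta$ controls both components simultaneously.

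The computation then proceeds in three short moves. First, rewrite the precomposition: from $\alpha\,\alpha_c^{-1} = \alpha_c^{-1}\beta$, we get $u^{\alpha\,\alpha_c^{-1}} = (u^{\alpha_c^{-1}})^\beta$ (and similarly for $v$). Second, apply the property (\ref{Delta2}) of $\Delta$ itself with the automorphism $\beta$:
\[ \Delta\bigl((u^{\alpha_c^{-1}})^\beta, (v^{\alpha_c^{-1}})^\beta\bigr) = \Delta(u^{\alpha_c^{-1}}, v^{\alpha_c^{-1}})^{\hat{\beta}}. \]
Third, use the commutation $\hat{\beta}\,\alpha_z = \alpha_z\,\hat{\alpha}$ (from the second normalizer relation) to push $\hat{\beta}$ past $\alpha_z$. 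Stringing these together yields
\[ \Delta^{(\alpha_c,\alpha_z)}(u^\alpha, v^\alpha) = \Delta(u^{\alpha_c^{-1}}, v^{\alpha_c^{-1}})^{\alpha_z\,\hat{\alpha}} = \Delta^{(\alpha_c,\alpha_z)}(u,v)^{\hat{\alpha}}, \]
which is exactly (\ref{Delta2}) for $\Delta^{(\alpha_c,\alpha_z)}$.

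There is no real obstacle here; the only subtlety is conceptual rather than technical, namely recognizing that the normalizer condition is designed precisely to ensure the coherence between the action of $\alpha_c$ on $G/G'$ and the action of $\alpha_z$ on $G'$ that is needed to transport the $\Aut^c(G)$-equivariance of $\Delta$ through the twist. Once the $\beta$ above is produced, the verification is a one-line manipulation.
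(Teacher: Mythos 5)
Your proof is correct and follows essentially the same route as the paper's: both use the normalizer condition to produce a single automorphism (your $\beta$, the paper's $\beta_0$) whose actions on $G/G'$ and $G'$ simultaneously intertwine with $\alpha_c$ and $\alpha_z$, then apply the equivariance of $\Delta$ with that conjugate and commute back. The only cosmetic difference is that you verify (\ref{Delta2}) directly for $\alpha\in\Aut^c(G)$ while the paper runs the identical computation in the equivalent form (\ref{Delta1}) with an arbitrary $\beta\in\Aut(G)$ via $\res_c$ and $\res_z$.
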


\begin{proof}Clearly $\Delta^{(\alpha_c,\alpha_z)}$ is a $G'$-valued bilinear form on $G/G'$, and the issue here is whether $\Delta^{(\alpha_c,\alpha_z)}$ also satisfies (\ref{Delta2}), or equivalently (\ref{Delta1}). Let $\beta\in \Aut(G)$, and since $(\alpha_c,\alpha_z)\in \mathrm{Norm}(\Gamma(G))$, we have
\[ (\alpha_c\res_c(\beta)\alpha_c^{-1},\alpha_z\res_z(\beta)\alpha_z^{-1}) = (\res_c(\beta_0),\res_z(\beta_0))
\]
for some $\beta_0\in \Aut(G)$. For any $u,v\in G/G'$, we then have
\begin{align*}
\Delta^{(\alpha_c,\alpha_z)}(u^\beta,v^\beta) 
& = \Delta(u^{\beta\alpha_c^{-1}},v^{\beta\alpha_c^{-1}})^{\alpha_z}\\
& = \Delta(u^{\alpha_c^{-1}\cdot \alpha_c\beta\alpha_c^{-1}}, v^{\alpha_c^{-1}\cdot \alpha_c\beta\alpha_c^{-1}})^{\alpha_z}\\
& = \Delta(u^{\alpha_c^{-1}\beta_0},v^{\alpha_c^{-1}\beta_0})^{\alpha_z}\\
& = \Delta(u^{\alpha_c^{-1}},v^{\alpha_c^{-1}})^{\beta_0\alpha_z} \\
&= \Delta(u^{\alpha_c^{-1}},v^{\alpha_c^{-1}})^{\alpha_z\beta}\\
& = \Delta^{(\alpha_c,\alpha_z)}(u,v)^\beta,
\end{align*}
where in the third last equality, we used the fact that $\Delta\in B$ satisfies (\ref{Delta1}). This shows that indeed $\Delta^{(\alpha_c,\alpha_z)}$ is an element of $B$.
\end{proof}

Lemma \ref{action lemma} shows that we have a right action of $\mathrm{Norm}(\Gamma(G))$ on the abelian group $B$. Notice that both $S$ and $S'$ are invariant under this action. Moreover, since every $\Delta\in B$ satisfies (\ref{Delta2}), this action factors through $\Gamma(G)$. Multiplication of elements in $T(G)$, when translated to the corresponding bilinear forms, may then be expressed as follows.

\begin{prop}\label{multiplication prop} For any $\theta_1\Hol(G),\theta_2\Hol(G)\in T(G)$, we have
\[ \Delta_{\theta_1\theta_2} = \Delta_{\theta_1}^{\res(\theta_2)}\Delta_{\theta_2}.\]
\end{prop}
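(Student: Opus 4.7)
The plan is to pick coset representatives $\theta_1,\theta_2$ of the given elements of $T(G)$ that fix $1\in G$, so that $\theta_1\theta_2$ also fixes $1$ and the automorphisms $\alpha_c:=\res_c(\theta_2)$ and $\alpha_z:=\res_z(\theta_2)$ of $G/G'$ and $G'$, respectively, are literal restrictions of $\theta_2$. The key idea is to read off $\Delta_\theta$ from the way $\theta$ fails to be a group homomorphism of $G$. Concretely, $\rho(G)^\theta=N_{\Delta_\theta}$ means that $\theta^{-1}\rho(x)\theta$ lies in $N_{\Delta_\theta}$ for each $x\in G$; applying it to $1$ (which both $\theta$ and $N_{\Delta_\theta}$ fix) identifies this element as $\gamma(x\theta)\rho(x\theta)$, where $\gamma$ is the anti-homomorphism attached to $\Delta_\theta$ via \eqref{gamma-Delta}, and evaluating at a general $y$ yields the functional identity
\[ (ab)\theta=(a\theta)(b\theta)\cdot\Delta_\theta\bigl((a\theta)G',(b\theta)G'\bigr),\qquad a,b\in G. \]
This identity will be the workhorse, applied once for $\theta_1$ and once for $\theta_2$.

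Given $u,v\in G$, I would set $a'=u(\theta_1\theta_2)^{-1}$, $b'=v(\theta_1\theta_2)^{-1}$, $a=a'\theta_1$, $b=b'\theta_1$, so that $a\theta_2=u$ and $b\theta_2=v$. One application of the functional identity to $\theta_1$ gives $(a'b')\theta_1=ab\cdot c$ with $c:=\Delta_{\theta_1}(aG',bG')\in G'$. Applying $\theta_2$ and using the same identity, one splits $(abc)\theta_2$ as $((ab)\theta_2)\cdot(c\theta_2)$: this is legitimate because $c\in G'$ forces $c\theta_2\in G'$ as well, so $\Delta_{\theta_2}(-,c\theta_2 G')=\Delta_{\theta_2}(-,G')=1$ by bilinearity. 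A second, direct application of the identity then gives $(ab)\theta_2=uv\cdot\Delta_{\theta_2}(uG',vG')$, while $c\theta_2=c^{\alpha_z}$ since the restriction of $\theta_2$ to $G'$ is exactly $\alpha_z$.

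It only remains to rewrite $aG'$ in terms of $uG'$: from $a\theta_2=u$ and the fact that $\theta_2$ induces $\alpha_c$ on $G/G'$ one gets $aG'=(uG')^{\alpha_c^{-1}}$, and similarly $bG'=(vG')^{\alpha_c^{-1}}$. By the very definition in Lemma \ref{action lemma}, the term $\Delta_{\theta_1}(aG',bG')^{\alpha_z}$ is therefore $\Delta_{\theta_1}^{(\alpha_c,\alpha_z)}(uG',vG')$. Combining everything and isolating $\Delta_{\theta_1\theta_2}(uG',vG')=v^{-1}u^{-1}\cdot(a'b')\theta_1\theta_2$ yields
\[ \Delta_{\theta_1\theta_2}(uG',vG')=\Delta_{\theta_1}^{(\alpha_c,\alpha_z)}(uG',vG')\cdot\Delta_{\theta_2}(uG',vG'), \]
which is the claim; the two factors commute because they lie in the abelian group $G'$, and since the action in Lemma \ref{action lemma} factors through $\Gamma(G)$ the right-hand side depends only on $\res(\theta_2)$.

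The main obstacle is really just bookkeeping: $\theta_2$ is merely a permutation and is \emph{a priori} incompatible with products, and one genuinely needs the vanishing $\Delta_{\theta_2}(-,G')=1$ to peel the central factor $c$ off cleanly when the functional identity is applied to $\theta_2$.
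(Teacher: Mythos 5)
Your argument is correct, and it is essentially the computation that the paper outsources by citing \cite[Proposition 4.1]{LMH} verbatim: one conjugates $\rho(x)$ by a representative $\theta$ fixing $1$, identifies $\theta^{-1}\rho(x)\theta$ with $\gamma(x^\theta)\rho(x^\theta)$ by regularity, and reads off the twisted-product identity, with the central factor peeled off exactly as you do using $c^{\theta_2}\in G'$ and $\Delta_{\theta_2}(-,G')=1$. The only points worth flagging are that $(\res_c(\theta_2),\res_z(\theta_2))\in\mathrm{Norm}(\Gamma(G))$ (Lemma \ref{res image}) is what licenses invoking Lemma \ref{action lemma}, and that the action factoring through $\Gamma(G)$ is what makes $\Delta_{\theta_1}^{\res(\theta_2)}$ well defined --- both of which you note.
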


\begin{proof}
The proof of \cite[Proposition 4.1]{LMH} verbatim.
\end{proof}

\begin{prop}\label{kernel prop}We have $\mathcal{S} = \ker(\res)$.
\end{prop}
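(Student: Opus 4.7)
The plan is to follow \cite[Proposition 4.2]{LMH}, with modifications to accommodate the nontrivial subgroup $\Gamma(G)$. I will prove the two containments separately, using in each direction the explicit relationship between $\theta$ and its associated bilinear form.

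For the forward inclusion $\mathcal{S} \subseteq \ker(\res)$, given $\theta\Hol(G) \in \mathcal{S}$ I will produce a specific representative whose restrictions to $G/G'$ and to $G'$ are both the identity, so that $\res$ vanishes on it. Write $\Delta = \Delta_\theta \in S$. Since $p$ is odd and $G'$ is a finite abelian $p$-group, the square root $\mu(u) := \Delta(u,u)^{1/2}$ is well defined on $G/G'$, and symmetry together with bilinearity of $\Delta$ yield the quadratic-refinement identity
\[ \mu(uv) = \mu(u)\,\mu(v)\,\Delta(u,v). \]
Setting $\theta_0(x) = x \cdot \mu(xG')$ defines a bijection of $G$ fixing $1$. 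A direct computation, using centrality of $G'$ and the identity above, shows $\theta_0^{-1}\rho(y)\theta_0 = \gamma(\theta_0(y))\rho(\theta_0(y))$, where $\gamma$ is the anti-homomorphism associated with $\Delta$ via (\ref{gamma-Delta}). Thus $\rho(G)^{\theta_0} = N_\Delta = \rho(G)^\theta$. Since $N_\Delta$ is normal in $\Hol(G)$, the bijection $\theta_0$ automatically lies in $\NHol(G)$, and by the regularity of the $T(G)$-action on normal regular subgroups we have $\theta_0\Hol(G) = \theta\Hol(G)$. By construction $\theta_0$ acts trivially on $G/G'$ and, because $\mu$ vanishes on $G'$, also on $G'$, so $\res(\theta\Hol(G)) = 1$.

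For the reverse inclusion $\ker(\res) \subseteq \mathcal{S}$, pick a representative $\theta$ with $1^\theta = 1$. The hypothesis $\res(\theta\Hol(G))=1$ gives $(\res_c(\theta), \res_z(\theta)) \in \Gamma(G)$, so by definition of $\Gamma(G)$ there exists $\beta \in \Aut(G)$ with $\res_c(\beta) = \res_c(\theta)^{-1}$ and $\res_z(\beta) = \res_z(\theta)^{-1}$. Replacing $\theta$ by $\theta\beta$ keeps $1$ fixed and the coset $\theta\Hol(G)$ unchanged, and after this normalisation I may assume $\res_c(\theta) = \id$ and $\res_z(\theta) = \id$. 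Then $\theta(x) = x\,f(x)$ for some $f : G \to G'$ with $f(x)=1$ whenever $x \in G'$. Using $\theta^{-1}\rho(y)\theta = \gamma(\theta(y))\rho(\theta(y))$, centrality of $G'$, and the fact that $\theta$ preserves cosets of $G'$, this identity translates into the functional equation
\[ f(xy) = f(x)\, f(y)\, \Delta_\theta(xG', yG'). \]
Specialising to $y \in G'$ shows $f$ is constant on cosets of $G'$, hence descends to $\bar f : G/G' \to G'$. Since $G/G'$ is abelian we have $\bar f(uv) = \bar f(vu)$, and comparing the two expressions produced by the functional equation with $(u,v)$ and $(v,u)$ forces $\Delta_\theta(u,v) = \Delta_\theta(v,u)$, so $\Delta_\theta \in S$ and $\theta\Hol(G) \in \mathcal{S}$.

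The main obstacle is organisational rather than substantive: unlike in \cite{LMH}, where $\Aut^c(G)=1$ makes the restrictions $\res_c(\theta)$ and $\res_z(\theta)$ canonical, here both must be understood only modulo $\Gamma(G)$. The only extra step this demands is the explicit normalisation by $\beta$ at the start of the reverse direction, which the very definition of $\Gamma(G)$ makes possible precisely when $\theta\Hol(G) \in \ker(\res)$; once it is carried out, the derivation of symmetry of $\Delta_\theta$ proceeds exactly as in the special case $\Aut^c(G)=1$.
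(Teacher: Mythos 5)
Your proof is correct and follows essentially the same route as the paper: the forward inclusion uses the square‑root map $x\mapsto x\Delta(xG',xG')^{1/2}$ to realize a symmetric $\Delta_\theta$ by a representative acting trivially on $G/G'$ and $G'$ (the content of \cite[Corollary 3.2]{LMH}), and the reverse inclusion hinges on exactly the normalisation by a $\beta\in\Aut(G)$ with $(\res_c(\beta),\res_z(\beta))=(\res_c(\theta),\res_z(\theta))$ that the paper's proof singles out as the needed modification of \cite[Proposition 4.2]{LMH}. Extracting symmetry from the cocycle identity $f(xy)=f(x)f(y)\Delta_\theta(xG',yG')$ together with $f(xy)=f(yx)$ is the same computation as the paper's comparison of $[x^\beta,y^\beta]$ with $[x^\beta,y^\beta]_\circ$, just phrased additively.
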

\begin{proof}The proof of \cite[Proposition 4.2]{LMH} verbatim, except that for our generalized $\res$, when $\theta\Hol(G)\in \ker(\res)$ we only know that there exists $\beta\in \Aut(G)$ such that
\[ (\res_c(\theta),\res_z(\theta) ) = (\res_c(\beta),\res_z(\beta)).\]
But we simply replace the equation in \cite[Proposition 4.2]{LMH} by
\[ [x^\beta,y^\beta]=[x,y]^\beta = [x,y]^\theta = [x^\theta,y^\theta]_\circ = [x^\beta,y^\beta]_\circ\mbox{ for all }x,y\in G\]
and the same argument shows that $\Delta_{\theta}$ is symmetric.
 \end{proof}

\begin{prop}\label{sd prop} We have $T(G) = \mathcal{S}\rtimes \mathcal{S}'$.
\end{prop}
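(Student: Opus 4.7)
The plan is to mimic the argument of \cite[Proposition 4.3]{LMH}, making adjustments analogous to those already made in Propositions \ref{multiplication prop} and \ref{kernel prop} to accommodate the generalized $\res$. Three things must be verified: that $\mathcal{S}$ is a normal subgroup of $T(G)$, that $\mathcal{S}'$ is a subgroup, and that $T(G) = \mathcal{S}\cdot\mathcal{S}'$ with trivial intersection.

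Normality of $\mathcal{S}$ is immediate from Proposition \ref{kernel prop}, which identifies it with $\ker(\res)$. For $\mathcal{S}'$, closure under multiplication follows from Proposition \ref{multiplication prop}: given $\theta_1\Hol(G),\theta_2\Hol(G)\in\mathcal{S}'$, the form $\Delta_{\theta_1\theta_2} = \Delta_{\theta_1}^{\res(\theta_2)}\Delta_{\theta_2}$ is antisymmetric, because the $\mathrm{Norm}(\Gamma(G))$-action preserves $S'$ (as noted just after Lemma \ref{action lemma}) and because $S'$ is a subgroup of $B$; closure under inversion is analogous. The intersection $\mathcal{S}\cap\mathcal{S}'$ corresponds, via the injection $\theta\Hol(G)\mapsto\Delta_\theta$, to $S\cap S' = \{\Delta_0\}$, and since $N_{\Delta_0} = \rho(G)$ this forces $\theta\in\Hol(G)$.

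It then remains to factor an arbitrary $\theta\Hol(G)\in T(G)$ as $\xi\eta$ with $\xi\Hol(G)\in\mathcal{S}$ and $\eta\Hol(G)\in\mathcal{S}'$. Decompose $\Delta_\theta = \Delta_S\cdot\Delta_{S'}$ according to the inner direct product $B = S\cdot S'$. By \cite[Proposition 3.1]{LMH}, the isomorphism class of $N_\Delta$ depends only on the antisymmetric component of $\Delta$, so $N_{\Delta_{S'}}\simeq N_{\Delta_\theta}\simeq G$; this yields an element $\eta\Hol(G)\in\mathcal{S}'$ with $\Delta_\eta = \Delta_{S'}$. Setting $\xi = \theta\eta^{-1}$ and applying Proposition \ref{multiplication prop} to $\theta = \xi\eta$, one obtains $\Delta_\xi = \Delta_S^{\res(\eta)^{-1}}$, which still lies in $S$ since the $\mathrm{Norm}(\Gamma(G))$-action preserves $S$; hence $\xi\Hol(G)\in\mathcal{S}$ as required. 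The main (indeed essentially only nontrivial) point is the appeal to \cite[Proposition 3.1]{LMH}, which guarantees that the antisymmetric component of $\Delta_\theta$ corresponds to a genuine element of $T(G)$ rather than merely to a bilinear form in $S'$; everything else is bookkeeping with the multiplication rule of Proposition \ref{multiplication prop} and the invariance of $S$ and $S'$ under the $\mathrm{Norm}(\Gamma(G))$-action.
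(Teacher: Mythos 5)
Your proof is correct and follows essentially the same route as the paper: normality of $\mathcal{S}$ via Proposition \ref{kernel prop}, trivial intersection via $S\cap S'=\{\Delta_0\}$, and surjectivity of the product by decomposing $\Delta_\theta$ in $B=SS'$ and invoking \cite[Proposition 3.1]{LMH} together with the multiplication rule of Proposition \ref{multiplication prop}. The only (harmless) difference is that in the last step you define the symmetric factor directly as $\xi=\theta\eta^{-1}\in T(G)$ and then check $\Delta_\xi=\Delta_S^{\res(\eta)^{-1}}\in S$, whereas the paper first realizes the twisted symmetric form $\Delta^{\res(\theta_2)^{-1}}$ as some $\Delta_{\theta_1}$ via \cite[Corollary 3.2]{LMH} and then verifies the product; your version slightly shortens this by avoiding that extra citation.
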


\begin{proof}
We have $\mathcal{S}\cap\mathcal{S}' =1$ because $S\cap S' = 1$ and the trivial bilinear form corresponds to the identity element of $T(G)$.
Since $\mathcal{S}$ is a normal subgroup of $T(G)$ by Proposition \ref{kernel prop}, it remains to verify that every element $\theta\Hol(G)\in T(G)$ belongs to the product $\mathcal{S}\mathcal{S}'$.

 To do so, we first decompose $\Delta_\theta = \Delta\Delta_2$ with $\Delta\in S$ and $\Delta_2\in S'$. We have $N_{\Delta_\theta}\simeq G$ because $\theta\Hol(G)\in T(G)$. Then $N_{\Delta_2}\simeq G$ holds by \cite[Proposition 3.1]{LMH}, which means that 
\[ \Delta_ 2 = \Delta_{\theta_2}\mbox{ for some }\theta_2\Hol(G)\in T(G).\]
Consider $\Delta_1 = \Delta^{\res(\theta_2)^{-1}}$, which is an element of $S$. By \cite[Corollary 3.2]{LMH}, we have  $N_{\Delta_1}\simeq G$ and so
\[ \Delta_1 = \Delta_{\theta_1}\mbox{ for some }\theta_1\Hol(G)\in T(G).\]
From Proposition \ref{multiplication prop}, we then deduce that
\[ \Delta_{\theta_1\theta_2} = \Delta_{\theta_1}^{\res(\theta_2)}\Delta_{\theta_2} =\Delta\Delta_2 =\Delta_\theta.\]
This shows that $\theta\Hol(G) = \theta_1\Hol(G)\cdot \theta_2\Hol(G)$, which is an element of $\mathcal{S}\mathcal{S}'$ because $\Delta_{\theta_1}\in S$ and $\Delta_{\theta_2}\in S'$.
 \end{proof}

Finally, we also have:

\begin{theorem}\label{pre thm}We have $T(G)\simeq S\rtimes \res(\mathcal{S}')$.
\end{theorem}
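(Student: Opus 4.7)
The plan is to use Proposition \ref{sd prop} to write $T(G)=\mathcal{S}\rtimes \mathcal{S}'$, and then identify the two factors separately: the normal factor $\mathcal{S}$ with $S$, and the complement $\mathcal{S}'$ with $\res(\mathcal{S}')$. Once both are in place, the only remaining task is to verify that the conjugation action of $\mathcal{S}'$ on $\mathcal{S}$ inside $T(G)$ transports, under these identifications, to the restriction to $\res(\mathcal{S}')\times S$ of the action on $B$ from Lemma \ref{action lemma}.

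For the first identification I would define
\[ \Phi : \mathcal{S}\longrightarrow S,\qquad \theta\Hol(G)\longmapsto \Delta_\theta.\]
Injectivity is immediate from Proposition \ref{gamma prop new}, since $\rho(G)^\theta=N_{\Delta_\theta}$ determines $\Delta_\theta$. Surjectivity uses \cite[Corollary 3.2]{LMH}: for every $\Delta\in S$ one has $N_\Delta\simeq G$, so $N_\Delta=\rho(G)^\theta$ for some $\theta\Hol(G)\in T(G)$, which lies in $\mathcal{S}$ by definition of $\mathcal{S}$. That $\Phi$ is a homomorphism follows from Proposition \ref{multiplication prop}: if $\theta_2\in \mathcal{S}=\ker(\res)$ then $\res(\theta_2)$ is trivial in $\mathrm{Norm}(\Gamma(G))/\Gamma(G)$, and since the action of Lemma \ref{action lemma} factors through $\Gamma(G)$ (because every $\Delta\in B$ satisfies (\ref{Delta2})), one has $\Delta_{\theta_1}^{\res(\theta_2)}=\Delta_{\theta_1}$, whence $\Delta_{\theta_1\theta_2}=\Delta_{\theta_1}\Delta_{\theta_2}$.

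For the second identification, the restriction $\res|_{\mathcal{S}'}$ has kernel $\mathcal{S}'\cap \ker(\res)=\mathcal{S}'\cap\mathcal{S}=\{1\}$ by Propositions \ref{kernel prop} and \ref{sd prop}, so $\res|_{\mathcal{S}'}$ is an isomorphism onto $\res(\mathcal{S}')$. To pin down the semidirect product structure, I would take $\theta_1\in \mathcal{S}$ and $\theta_2\in \mathcal{S}'$ and compute $\Delta_{\theta_2^{-1}\theta_1\theta_2}$ by applying Proposition \ref{multiplication prop} twice. Using $\res(\theta_1)=\Gamma(G)$ one first gets $\Delta_{\theta_2^{-1}\theta_1}=\Delta_{\theta_2^{-1}}\Delta_{\theta_1}$, and then
\[ \Delta_{\theta_2^{-1}\theta_1\theta_2}=\Delta_{\theta_2^{-1}}^{\res(\theta_2)}\,\Delta_{\theta_1}^{\res(\theta_2)}\,\Delta_{\theta_2}.\]
The identity $\Delta_{\theta_2^{-1}}^{\res(\theta_2)}\Delta_{\theta_2}=\Delta_{\theta_2^{-1}\theta_2}=\Delta_0$ gives $\Delta_{\theta_2^{-1}}^{\res(\theta_2)}=\Delta_{\theta_2}^{-1}$, and since $B$ is abelian the expression collapses to $\Delta_{\theta_1}^{\res(\theta_2)}$. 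Under $\Phi$ this is exactly the prescribed action of $\res(\mathcal{S}')$ on $S$, so the combined map
\[ T(G)=\mathcal{S}\rtimes\mathcal{S}'\longrightarrow S\rtimes \res(\mathcal{S}'),\qquad \theta_1\theta_2\Hol(G)\longmapsto (\Delta_{\theta_1},\res(\theta_2))\]
is a group isomorphism.

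I do not expect a genuine obstacle: all the nontrivial inputs, namely the decomposition $T(G)=\mathcal{S}\rtimes\mathcal{S}'$, the multiplicativity formula for $\Delta$, and the realizability of every symmetric form by an element of $T(G)$, are already available from Propositions \ref{multiplication prop}, \ref{kernel prop}, \ref{sd prop} and \cite[Corollary 3.2]{LMH}. The only point requiring care is bookkeeping the fact that the action of $\mathrm{Norm}(\Gamma(G))$ on $B$ genuinely factors through $\mathrm{Norm}(\Gamma(G))/\Gamma(G)$, so that evaluating it at the coset $\res(\theta_2)$ makes sense; this is built into the construction of $\res$ and Lemma \ref{action lemma}.
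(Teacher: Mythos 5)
Your proposal is correct and follows essentially the route the paper intends: it assembles the isomorphism from Propositions \ref{multiplication prop}, \ref{kernel prop} and \ref{sd prop}, the realizability of symmetric forms via \cite[Corollary 3.2]{LMH}, and the fact that the action of Lemma \ref{action lemma} factors through $\Gamma(G)$, which is exactly the argument of \cite[Theorem 4.4]{LMH} that the paper invokes verbatim. The computation $\Delta_{\theta_2^{-1}\theta_1\theta_2}=\Delta_{\theta_1}^{\res(\theta_2)}$ correctly identifies the semidirect-product action, so nothing is missing.
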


\begin{proof}The proof of \cite[Theorem 4.4]{LMH} verbatim.
\end{proof}


Regarding the structure of $T(G)$, for the symmetric part, we simply have to compute $S$ and this gives us a normal subgroup of $T(G)$. For the anti-symmetric part, however, the regular subgroup arising from a $\Delta\in S'$ need not be isomorphic to $G$. The multiplication in $S'$ does not correspond to that in $T(G)$ either, so in addition to computing $S'$, we also need to understand the structure of $\res(\mathcal{S}')$.

\begin{remark}\label{remark} As observed in \cite[Example 2.6]{LMH}, each $\sigma\in \End(G')$ yields an anti-symmetric bilinear form
\[ \Delta_{[\sigma]}: G/G' \times G/G' \rightarrow G';\,\ \Delta_{[\sigma]}(xG',yG') = [x,y]^\sigma.\]
In our setting, this is an element of $S'$ if and only if 
\begin{equation}\label{sigma commute} \sigma \hat{\alpha} = \hat{\alpha}\sigma\mbox{ for all }\alpha\in \Aut^c(G).\end{equation}
For each $\lambda\in \mathbb{Z}$, we in particular have
\[ \Delta_{[\lambda]} : G/G'\times G/G'\rightarrow G';\;\ \Delta_{[\lambda]}(xG',yG') = [x,y]^\lambda,\]
which is clearly an element of $S'$. These bilinear forms were first considered in \cite[Section 3]{class2}, where it was shown that
\[  N_{\Delta_{[\lambda]}} \simeq G\iff \lambda\not\equiv -\tfrac{1}{2}\hspace{-3mm}\pmod{p}.\]
In this case, the corresponding element of $T(G)$ is the power map
\[ \theta_{\kappa} : G\rightarrow G;\,\ x^{\theta_{\kappa}} = x^{\kappa},\]
where $\kappa$ is the multiplicative inverse of $1+2\lambda$ modulo the exponent $p^r$ of $G/Z(G)$. These power maps give rise to the cyclic subgroup
\begin{equation}\label{theta}
\{ \theta_\kappa : \kappa\in \mathbb{Z}\mbox{ coprime to }p\} \simeq (\mathbb{Z}/p^r\mathbb{Z})^\times\end{equation}
of $T(G)$, and this is exactly the content of \cite[Proposition 3.1]{class2}. For an arbitrary $\sigma\in \End(G')$, however, it appears that there is no simple way to determine whether $N_{\Delta_{[\sigma]}} \simeq G$ in general, except the necessary condition $1+2\sigma\in \Aut(G')$ as shown in \cite[Example 3.4]{LMH}. But as we previously noted, for a special family (\ref{Gpi}) of groups $G$, we have a criterion (to be stated in Proposition \ref{criterion}) which can be checked using linear algebra techniques.\end{remark}

In the case that $\Aut^c(G) =1$, the condition (\ref{Delta2}) is vacuous, and $B$ is the set of all $G'$-valued bilinear forms on $G/G'$. Then $T(G)$ is likely to be large, and this is how we found examples of $G$ with large $T(G)$ in \cite{LMH}. To get examples of $G$ with a small $T(G)$, we shall consider the case when $\Aut^c(G)$ is large so that (\ref{Delta2}) yields many relations. In fact, for the groups $G$ in Theorem \ref{thm1}, there are enough relations from (\ref{Delta2}) that $S=1$ is forced to be trivial, while $\mathcal{S}'\simeq \res(\mathcal{S}')$ is forced to be the subgroup (\ref{theta})  in (a) -- (d) and is only slightly bigger in (e).

\section{A special family of finite $p$-groups of class two}\label{group section}

Let $p$ be an odd prime and let $n\geq 2$ be an integer. Let us review a construction, via linear algebra as described in \cite{CarIJM}, of $p$-groups of order $p^{n+{n\choose 2}}$ and class two.

Let $V$ be an $n$-dimensional vector space over $\mathbb{F}_p$ and let $\Lambda^2 V$ denote its exterior square. We shall write their additions multiplicatively and their scalar multiplications exponentially.

Now, let $\pi : V\rightarrow \Lambda^2V$ be any linear map. By fixing an $\mathbb{F}_p$-basis 
\[v_1,v_2,\dots,v_n\]
for $V$, in which case we have that
\[v_j \wedge v_k,\,\ 1\leq j < k \leq n\]
is an $\mathbb{F}_p$-basis for $\Lambda^2 V$, we may write $\pi$ in matrix form
\[ (\pi_{i,(j,k)}),\mbox{ where }1\leq i\leq n\mbox{ and }1\leq j<k\leq n,\]
with entries in $\mathbb{F}_p$. More precisely, for each $1\leq i\leq n$, we have
\[v_i^\pi =  \prod_{j<k} (v_j\wedge v_k)^{\pi_{i,(j,k)}}.\]
We can then construct a $p$-group $G = G_\pi$ via the presentation
\begin{align}\label{Gpi}\notag
G =\Bigg\langle x_1,x_2,\dots,x_n : &\, [[x_i,x_j],x_k] = 1\mbox{ for }1\leq i,j,k\leq n,\\[-10pt]
&\,  x_i^p = \prod_{j<k} [x_j,x_k]^{\pi_{i,(j,k)}} \mbox{ for }1\leq i\leq n\Bigg\rangle.
\end{align}
This group has order $p^{n+{n\choose 2}}$ and is special, namely
\[ G' = Z(G) = \mathrm{Frat}(G).\]
Moreover, we may identify
\begin{equation}\label{iden}
 G/G' = V\mbox{ and } G' = \Lambda^2 V
 \end{equation}
by associating each $x_iG'$ with $v_i$ and each $[x_j,x_k]$ with $v_j\wedge v_k$. Then the linear map $\pi$ corresponds to the $p$th power map $G/G'\rightarrow G'$ of $G$. For each $\alpha\in \GL(V)$, let us write $ \hat{\alpha}\in \GL(\Lambda^2 V)$ for the natural linear map induced by $\alpha$, given by
\[(v_j\wedge v_k)^{\hat{\alpha}} = v_j^\alpha \wedge v_k^\alpha\mbox{ for all }1\leq j<k\leq n.\]
As shown in \cite[Section 3]{CarIJM}, by setting
\[ \Aut^c(\pi) = \{ \alpha\in \GL(V) \mid \pi\hat{\alpha} = \alpha\pi\},\]
we have a natural isomorphism
\begin{equation}\label{iden Aut}
\Aut^c(G) \simeq\Aut^c(\pi).
\end{equation}
In particular, for any $\beta\in \Aut(G)$ such that $\beta\Aut_c(G)$ corresponds to $\alpha\in \Aut^c(\pi)$, its actions on $G/G'$ and $G'$, respectively, are those given by $\alpha$ and $\hat{\alpha}$ via the identifications (\ref{iden}). Notice that $\pi\hat{\alpha} = \alpha\pi$ is likely to yield more relations on $\alpha$ when $\pi$ has high rank. To find examples with large $\Aut^c(\pi)$, naturally we want to consider $\pi$ of small rank, and this is why we restrict to $\pi$ of rank one in this paper.

The structure of $T(G)$ for such groups $G$ was considered in \cite{LMH}. As noted in \cite[Proposition 5.1]{LMH}, the universal property of the exterior square and (\ref{iden}) imply that the $G'$-valued anti-symmetric bilinear forms on $G/G'$ are precisely the $\Delta_{[\sigma]}$ in Remark \ref{remark}.

\begin{prop}\label{criterion}Let $\sigma\in \End(G')$ be such that $1+2\sigma\in \Aut(G')$ and that (\ref{sigma commute}) holds. For any $\eta\in \Aut(G/G')$, the following are equivalent:
\begin{enumerate}[label=$(\alph*)$]
\item There exists $\theta\Hol(G)\in T(G)$ with $1^\theta =1$ such that $\Delta_\theta = \Delta_{[\sigma]}$ and $\res_c(\theta) = \eta$.
\item The equality $\pi\hat{\eta}(1+2\sigma) = \eta\pi$ is satisfied.
\end{enumerate}
Moreover, in this case, we have
\[ \res(\theta \Hol(G)) = (\eta,\hat{\eta}(1+2\sigma))\Gamma(G).\]
\end{prop}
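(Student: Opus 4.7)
The plan is to exploit the correspondence, established in Section~\ref{bilinear form sec}, between the datum in $(a)$ and isomorphisms $(G,\cdot) \to (G,\circ)$, where the operation $\circ$ on $G$ is obtained from (\ref{gamma-Delta}) with $\Delta = \Delta_{[\sigma]}$, that is,
\[ x \circ y = x \Delta_{[\sigma]}(xG', yG')\, y = xy[x,y]^\sigma. \]
A direct class-two computation inside $G$, using that $[x,y]^\sigma \in G' \subseteq Z(G)$, yields the two key identities
\[ [x,y]_\circ = [x,y]^{1+2\sigma} \quad\text{and}\quad x^{\circ p} = x^p. \]
The assumption $1+2\sigma \in \Aut(G')$ then guarantees that $(G,\circ)$ is a class-two group whose derived subgroup and centre both coincide with $G'$, and the compatibility $\sigma \hat{\alpha} = \hat{\alpha}\sigma$ for $\alpha \in \Aut^c(G)$ is exactly what puts $\Delta_{[\sigma]}$ in $B$ (as recorded in Remark~\ref{remark}).

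For the direction $(a) \Rightarrow (b)$: any isomorphism $\theta : (G,\cdot) \to (G,\circ)$ satisfies $[x,y]^\theta = [x^\theta, y^\theta]_\circ = [x^\theta, y^\theta]^{1+2\sigma}$; since the right-hand commutator depends only on cosets modulo $G'$, it corresponds, under (\ref{iden}), to $v \wedge w$ twisted by $\hat{\eta}$, and one reads off
\[ \res_z(\theta) = \hat{\eta}(1 + 2\sigma). \]
Applying $\theta$ to the defining relation $x_i^p = v_i^\pi$, the left-hand side becomes $v_i^{\pi \hat{\eta}(1+2\sigma)}$ via this formula for $\res_z(\theta)$, while the right-hand side is $(x_i^\theta)^{\circ p} = (x_i^\theta)^p$, which under (\ref{iden}) equals $v_i^{\eta\pi}$. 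Equating these gives exactly $(b)$.

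For $(b) \Rightarrow (a)$: pick, for each $i$, an arbitrary lift $t_i \in G$ of $v_i^\eta \in V = G/G'$, and attempt to define $\theta$ by $x_i^\theta = t_i$ and linear (homomorphic) extension $(G,\cdot) \to (G,\circ)$. The class-two relations of (\ref{Gpi}) automatically hold in $(G,\circ)$, while the $p$-th power relations $t_i^{\circ p} = \prod_{j<k}[t_j,t_k]_\circ^{\pi_{i,(j,k)}}$, after substituting $[t_j,t_k]_\circ = [t_j,t_k]^{1+2\sigma}$ and $t_i^{\circ p} = t_i^p$, translate under (\ref{iden}) to $v_i^{\eta\pi} = v_i^{\pi\hat{\eta}(1+2\sigma)}$, which is precisely $(b)$. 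Hence $\theta$ extends consistently by the universal property of the presentation (\ref{Gpi}), and it is a bijection since it acts as $\eta$ on $G/G'$ and as $\hat{\eta}(1+2\sigma)$ on $G'$, both invertible by hypothesis. Finally, $\theta$ conjugates $\rho(G)$ onto $N_{\Delta_{[\sigma]}}$, a normal regular subgroup of $\Hol(G)$ isomorphic to $G$; since both $\Hol(G)$ and $\Hol(N_{\Delta_{[\sigma]}})$ have the same order inside $S(G)$ and the former is contained in the latter, they agree, forcing $\theta \in \NHol(G)$ and thus $\theta \Hol(G) \in T(G)$.

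The concluding formula $\res(\theta\Hol(G)) = (\eta, \hat{\eta}(1+2\sigma))\Gamma(G)$ is immediate from the values $\res_c(\theta) = \eta$ and $\res_z(\theta) = \hat{\eta}(1+2\sigma)$ together with the definition (\ref{new res}) of $\res$. The main technical care lies in the bookkeeping between the group-theoretic operations on $(G,\circ)$ and the identifications (\ref{iden}) of $G/G'$ with $V$ and $G'$ with $\Lambda^2 V$; once these are handled, both implications reduce to the single computation $[x,y]_\circ = [x,y]^{1+2\sigma}$ and the observation that $p$-th powers are unchanged by passage from $\cdot$ to $\circ$.
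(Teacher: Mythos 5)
Your proof is correct and takes essentially the route the paper intends: the paper's own ``proof'' merely cites \cite[Proposition 5.2]{LMH} (adjusted for the condition (\ref{sigma commute}) and the generalized map (\ref{new res})), and your argument via the operation $x\circ y = xy[x,y]^\sigma$, the identities $[x,y]_\circ = [x,y]^{1+2\sigma}$ and $x^{\circ p}=x^p$, and the presentation (\ref{Gpi}) is exactly a reconstruction of that cited argument. The only ingredient the paper adds explicitly that you leave implicit is the check that $(\eta,\hat{\eta}(1+2\sigma))$ lies in $\mathrm{Norm}(\Gamma(G))$, but this already follows from Lemma \ref{res image}, so there is no gap.
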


\begin{proof}
Basically the same proof as \cite[Proposition 5.2]{LMH}. The same argument is valid as long as we impose the condition (\ref{sigma commute}) and replace the map $\res$ there with our generalization (\ref{new res}).

Let us note that $(\eta,\hat{\eta}(1+2\sigma))$ is indeed an element of $\mathrm{Norm}(\Gamma(G))$. This is because  by the condition (\ref{sigma commute}), we have
\begin{align*}
& \hspace{5mm}(\eta,\hat{\eta}(1+2\sigma))(\alpha,\hat{\alpha})(\eta,\hat{\eta}(1+2\sigma))^{-1} \\
& = (\eta\alpha\eta^{-1}, \hat{\eta}(1+2\sigma) \hat{\alpha}(1+2\sigma)^{-1}\hat{\eta}^{-1})\\
&=(\eta\alpha\eta^{-1},\hat{\eta}\hat{\alpha}\hat{\eta}^{-1})
\end{align*}
for any $(\alpha,\hat{\alpha}) \in \Gamma(G)$ with $\alpha\in \Aut^c(G)$.

Let us also note that whether the equality in (b) holds depends only on the coset $\eta\Aut^c(G)$. Indeed, if $\pi\hat{\eta}(1+2\sigma) = \eta\pi$ is satisfied, then by the condition (\ref{sigma commute}) and the identification (\ref{iden Aut}), we have
\begin{align*}
\pi\widehat{\eta\alpha}(1+2\sigma) & = \pi \hat{\eta}\hat{\alpha}(1+2\sigma)\\
& = \pi\hat{\eta} (1+2\sigma) \hat{\alpha}\\
& = \eta\pi \hat{\alpha}\\
& = \eta\alpha\pi
\end{align*}
for any $\alpha\in \Aut^c(G)$. This corresponds to the fact that 
\[ \theta\Hol(G) = \theta\beta\Hol(G)\]
for any $\beta\in \Aut(G)$, and so the existence of $\theta\Hol(G)$ in (a) also only depends on the coset $\eta\Aut^c(G)$.
\end{proof}

Therefore, under Assumption \ref{assumption}, we deduce that
\begin{align*}
\res(\mathcal{S}') & = \{(\eta,\hat{\eta}(1+2\sigma))\Gamma(G) : \eta\in \Aut(G/G'),\, \sigma\in \End(G')\\
&\hspace{3.25cm}\mbox{satisfying $1+2\sigma\in \Aut(G')$ and (\ref{sigma commute})},\\
&\hspace{3.25cm}\mbox{the equation $\pi\hat{\eta}(1+2\sigma) =\eta\pi$ holds}\},
\end{align*}
or by making the change of variables $\tau = 1+2\sigma$, that
\begin{align}\notag
\res(\mathcal{S}') & = \{(\eta,\hat{\eta}\tau)\Gamma(G) : \eta\in \Aut(G/G'),\, \tau\in \Aut(G')\\\label{S'}
&\hspace{3.25cm}\mbox{satisfying (\ref{sigma commute}) and $\pi\hat{\eta}\tau =\eta\pi$}\},
\end{align}
where $\mathcal{S}'$ is the subgroup of $T(G)$ defined as in Section \ref{bilinear form sec}. This implies that the structure of $\res(\mathcal{S}')$ may be computed by solving $\pi\hat{\eta}\tau =\eta\pi$ for $\eta\in \Aut(G/G')$ for each of the $\tau\in\Aut(G')$ satisfying (\ref{sigma commute}), and this is essentially a problem in linear algebra.

In the next subsections, we shall restrict to $n=3,4$ and determine the linear maps $\pi: V\rightarrow \Lambda^2V$ of rank one, up to a change of basis. We shall use the following remark in the proof.

\begin{remark}It is a well-known fact that every element of $\Lambda^2V$ may be written as a sum (or product in our notation) of at most $\lfloor \frac{n}{2}\rfloor$ decomposable $2$-vectors (wedge products $u\wedge v$), and there are elements that attain this bound. This follows immediately from the natural identification of elements of $\Lambda^2V$ as $n\times n$ anti-symmetric matrices, and from the usual canonical form for the latter.

Let us also notice that non-zero wedge products $u\wedge v$ correspond to the $2$-dimensional subspaces $\langle u,v\rangle$ of $V$, where a change of basis of  the latter results in scaling $u\wedge v$ by the determinant of the corresponding invertible matrix. \end{remark}
 
%

\subsection{$3$-dimensional case}

In this subsection, we take $n=3$. Then
\[ \dim_{\mathbb{F}_p}(V) = 3,\,\ \dim_{\mathbb{F}_p}(\Lambda^2V) ={3\choose 2} = 3,\]
and the groups (\ref{Gpi}) that we construct are of order $p^{6}$. For $\pi$ of rank one, there are only two possibilities up to a change of basis.

\begin{prop}\label{rank one prop'} Let $\pi : V\rightarrow\Lambda^2V$ be a linear map of rank one.  

There exists a basis $v_1,v_2,v_3$ of $V$ such that
\[ v_2^\pi = v_3^\pi  =1\mbox{ and } v_1^\pi=\begin{cases}
(v_1\wedge v_2),\\
(v_2\wedge v_3).\\
\end{cases}\]
\end{prop}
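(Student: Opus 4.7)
The plan is to use the fact noted in the Remark above: in dimension three, every non-zero element of $\Lambda^2 V$ is decomposable, since $\lfloor 3/2 \rfloor = 1$. This reduces the problem to analyzing the mutual position of two two-dimensional subspaces of $V$, which in a three-dimensional ambient space can only take two shapes.

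First, I would write the one-dimensional image of $\pi$ as $\langle u \wedge v \rangle$ for some linearly independent $u, v \in V$, and set $W = \langle u, v \rangle$ and $K = \ker(\pi)$. Both are two-dimensional subspaces of the three-dimensional $V$. I would then split into two cases according to whether $K = W$ or $K \neq W$; these will correspond, respectively, to the normal forms $v_1^\pi = v_2 \wedge v_3$ and $v_1^\pi = v_1 \wedge v_2$.

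If $K = W$, I pick any basis $v_2, v_3$ of $W$ and any $v_1 \in V \setminus W$. Then $v_2^\pi = v_3^\pi = 1$ automatically, while $v_1^\pi$ is a non-zero scalar multiple of $u \wedge v$, hence of $v_2 \wedge v_3$; since $v_1$ does not appear on the right, rescaling $v_1$ normalizes the coefficient to $1$. If $K \neq W$, then $K \cap W$ is one-dimensional. I would pick a non-zero $v_2 \in K \cap W$, extend to a basis of $W$ by some $v_1 \in W \setminus K$, and pick any $v_3 \in K \setminus W$. These three vectors form a basis of $V$ because $v_1, v_2$ span $W$ and $v_3 \notin W$; moreover $v_2^\pi = v_3^\pi = 1$, and $v_1^\pi$ lies in $\langle u \wedge v\rangle = \langle v_1 \wedge v_2 \rangle$ (as $W = \langle v_1, v_2 \rangle$), with non-zero coefficient because $v_1 \notin K$.

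The only step requiring a little care is the normalization in the second case: scaling $v_1$ is useless, since both sides of $v_1^\pi = c(v_1 \wedge v_2)$ get multiplied by the same factor; one must instead rescale $v_2$, which remains in $K$ and scales only the right-hand side. I do not anticipate any real obstacle beyond this bookkeeping, since everything is elementary linear algebra in a three-dimensional space together with the decomposability remark.
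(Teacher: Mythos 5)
Your proof is correct and follows essentially the same route as the paper's: both use the decomposability of $2$-vectors in dimension three to reduce to the relative position of the $2$-dimensional subspaces $U=\langle u,v\rangle$ and $\ker(\pi)$, splitting into the cases $\dim(U\cap\ker(\pi))=1$ or $2$. You additionally spell out the scalar normalization (rescaling $v_1$ in one case and $v_2$ in the other), which the paper leaves implicit under ``it is not hard to see,'' and you handle it correctly.
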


\begin{proof}The hypothesis means that 
\[ \dim_{\mathbb{F}_p}(\ker(\pi))=2 \mbox{ and } \dim_{\mathbb{F}_p}(V^\pi)=1.\]
Let $\omega$ be a generator of $V^\pi$. We know that $\omega$ is expressible as a single wedge product because $n=3$. This implies that $\omega\in U\wedge U$ for some $2$-dimensional subspace $U$ of $V$. It is not hard to see that there exists a basis $v_1,v_2,v_3$ of $V$ such that
\[ v_1^\pi = (v_1\wedge v_2)\mbox{ with }U = \langle v_1,v_2\rangle \mbox{ and }\ker(\pi) =\langle v_2,v_3\rangle\]
when $U\cap \ker(\pi) = \langle v_2\rangle$ has dimension $1$, and
\[ v_1^\pi = (v_2\wedge v_3)\mbox{ with }U = \ker(\pi)= \langle v_2,v_3\rangle\]
when $U = \ker(\pi)=\langle v_2,v_3\rangle$ has dimension $2$.
\end{proof}

Next, let us compute $\Aut^c(\pi)$ for the two linear maps $\pi : V\rightarrow \Lambda^2V$ of rank one in Proposition \ref{rank one prop'}. We shall fix a basis $v_1,v_2,v_3$ of $V$ and write elements of $\GL(V)$ in matrix form
\[ \alpha = \begin{bmatrix}
a_{11} & a_{12} & a_{13} \\
a_{21} & a_{22} & a_{23} \\
a_{31} & a_{32} & a_{33} 
\end{bmatrix}\iff \begin{array}{c}
v_1^\alpha = v_1^{a_{11}} v_2^{a_{12}} v_3^{a_{13}},\\
v_2^\alpha=v_1^{a_{21}} v_2^{a_{22}} v_3^{a_{23}},\\
v_3^\alpha = v_1^{a_{31}} v_2^{a_{32}} v_3^{a_{33}}.
\end{array}\]
For the $\alpha\in \GL(V)$ such that $\pi\hat{\alpha} = \alpha\pi$, clearly $\ker(\pi)= \langle v_2,v_3\rangle$ is an invariant subspace of $\alpha$, which means that
\begin{equation}\label{alpha'}
 \alpha = \begin{bmatrix}
a_{11} & a_{12} & a_{13} \\
0 & a_{22} & a_{23} \\
0 & a_{32} & a_{33} 
\end{bmatrix}.
 \end{equation}
Note that then $a_{11}\neq 0$ since $\alpha$ is invertible, and we have $\alpha \in \Aut^c(\pi)$ exactly when $v_1^{\pi\hat{\alpha}}= v_1^{\alpha\pi}$. Moreover, in order for $v_1^{\pi\hat{\alpha}}= v_1^{\alpha\pi}$ to hold, the image $\langle v_1^\pi\rangle$ of $\pi$ must be invariant under $\hat{\alpha}$.

\begin{prop}\label{auto1'}Let $\pi :V \rightarrow\Lambda^2 V$ be the linear map defined by
\[ v_2^\pi = v_3^\pi=1\mbox{ and }v_1^\pi = (v_1\wedge v_2).\]
Then $\Aut^c(\pi) = P\rtimes Q$ is a semidirect product of the subgroups
\begin{align*}
P & = \left\{ 
\begin{bmatrix}
1 & b & 0 \\
0 & 1 & 0\\
0 & c & 1 
\end{bmatrix} : b,c,\in \mathbb{F}_p
\right\},\\
Q & = \left\{ 
\begin{bmatrix}
s & 0 & 0\\
0 & 1 & 0\\
0 & 0 & t
\end{bmatrix}: s,t\in\mathbb{F}_p^\times
\right\}.
\end{align*}
\end{prop}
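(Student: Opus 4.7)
The plan is to use directly the constraint $v_1^{\pi\hat\alpha} = v_1^{\alpha\pi}$ after reducing to the matrix shape in (\ref{alpha'}), which the paragraph just before the proposition already forces.

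First I would start from an arbitrary $\alpha \in \Aut^c(\pi)$ in the form
\[
\alpha = \begin{bmatrix} a_{11} & a_{12} & a_{13} \\ 0 & a_{22} & a_{23} \\ 0 & a_{32} & a_{33} \end{bmatrix}
\]
(necessary because $\langle v_2,v_3\rangle = \ker(\pi)$ is $\alpha$-invariant). Since $v_2, v_3$ are in the kernel, the only equation to impose is $v_1^{\pi\hat\alpha} = v_1^{\alpha\pi}$.

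Next I would compute both sides. The right-hand side is
\[
v_1^{\alpha\pi} = (v_1^{a_{11}} v_2^{a_{12}} v_3^{a_{13}})^\pi = (v_1\wedge v_2)^{a_{11}},
\]
using that $v_2,v_3 \in \ker\pi$. For the left-hand side, I expand
\[
v_1^{\pi\hat\alpha} = v_1^\alpha \wedge v_2^\alpha = (v_1^{a_{11}} v_2^{a_{12}} v_3^{a_{13}}) \wedge (v_2^{a_{22}} v_3^{a_{23}}),
\]
which by bilinearity and anti-commutativity of the wedge gives
\[
(v_1\wedge v_2)^{a_{11}a_{22}} \,(v_1\wedge v_3)^{a_{11}a_{23}} \,(v_2\wedge v_3)^{a_{12}a_{23}-a_{13}a_{22}}.
\]
Comparing the two expressions in the basis $\{v_1\wedge v_2, v_1\wedge v_3, v_2\wedge v_3\}$ of $\Lambda^2 V$ and using $a_{11}\neq 0$, I read off $a_{22}=1$, $a_{23}=0$, and consequently $a_{13}=0$. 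No further constraints arise, so $\Aut^c(\pi)$ consists exactly of the matrices of the form
\[
\begin{bmatrix} s & b & 0 \\ 0 & 1 & 0 \\ 0 & c & t \end{bmatrix}, \qquad s,t\in \mathbb{F}_p^\times,\ b,c\in \mathbb{F}_p.
\]

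Finally I would exhibit the semidirect structure by direct computation: every such matrix factors uniquely as the product of an element of $Q$ (the diagonal part) with an element of $P$ (the unipotent part), so $\Aut^c(\pi) = PQ$ with $P\cap Q = 1$; and conjugating a generic element of $P$ by a generic element of $Q$ simply rescales $b\mapsto s^{-1}b$, $c\mapsto t^{-1}c$, staying inside $P$, so $P\trianglelefteq \Aut^c(\pi)$. This yields $\Aut^c(\pi) = P\rtimes Q$ as claimed. I do not anticipate a real obstacle here: the entire proof is a short matrix computation and the only mildly delicate point is the careful bookkeeping of signs when expanding the wedge $v_1^\alpha \wedge v_2^\alpha$ in the basis of $\Lambda^2 V$.
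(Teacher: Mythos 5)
Your proof is correct and follows essentially the same route as the paper: reduce to the shape forced by the invariance of $\ker(\pi)$, impose $v_1^{\pi\hat\alpha}=v_1^{\alpha\pi}$ to get $a_{22}=1$ and $a_{13}=a_{23}=0$, and then verify the semidirect decomposition. The only cosmetic difference is that you obtain $a_{13}=a_{23}=0$ by expanding the wedge and comparing coefficients, whereas the paper first deduces it from the invariance of $\langle v_1\wedge v_2\rangle$ under $\hat\alpha$, and you check normality of $P$ by direct conjugation rather than exhibiting the projection homomorphism onto $Q$; both computations check out.
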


\begin{proof}Let $\alpha\in \GL(V)$ be as in (\ref{alpha'}). For $\langle v_1^\pi\rangle$ to be invariant under $\hat{\alpha}$, necessarily $\langle v_1,v_2\rangle$ is invariant under $\alpha$. This means that
\[a_{13} = a_{23} =0,\]
and we see that
\begin{align*}
 v_1^{\pi\hat{\alpha}}= v_1^{\alpha\pi}&\iff v_1^{a_{11}}v_2^{a_{12}} \wedge v_2^{a_{22}} = (v_1\wedge v_2)^{a_{11}}\\& \iff a_{22} = 1.
 \end{align*}
It follows that $ \alpha \in \Aut^c(\pi)$ if and only if $\alpha$ has the form stated below. It is not hard to check that we have a surjective homomorphism
\begin{align*}
\Aut^c(\pi) & \twoheadrightarrow Q\\
\begin{bmatrix}
a_{11} & a_{12} & 0 \\
0 & 1& 0 \\
0 & a_{32} & a_{33} 
\end{bmatrix}&\mapsto  \begin{bmatrix}
a_{11} & 0 & 0\\
0 & 1 & 0 \\
0 &  0&a_{33}
\end{bmatrix}
\end{align*}
with kernel $P$, and we deduce that $\Aut^c(\pi) = P\rtimes Q$.
\end{proof}

\begin{prop}\label{auto2'}Let $\pi :V \rightarrow\Lambda^2 V$ be the linear map defined by
\[ v_2^\pi = v_3^\pi  =1\mbox{ and }v_1^\pi = (v_2\wedge v_3).\]
 Then $\Aut^c(\pi) = P\rtimes Q$ is a semidirect product of the subgroups
\begin{align*}
P & = \left\{ 
\begin{bmatrix}
1 & b & c \\
0 & 1 &0\\
 0 & 0 & 1
\end{bmatrix} : b,c\in \mathbb{F}_p
\right\},\\
Q & =\left\{ 
\begin{bmatrix}
|A| &  \begin{matrix} 0 & 0 \end{matrix}\\
 \begin{matrix} 0 \\ 0 \end{matrix} & A
\end{bmatrix}:  A \in \GL_2(\mathbb{F}_p)
\right\}.
\end{align*}
\end{prop}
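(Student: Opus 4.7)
The plan is to mirror the strategy of Proposition~\ref{auto1'}. Any $\alpha \in \Aut^c(\pi)$ must preserve $\ker(\pi) = \langle v_2, v_3\rangle$, so $\alpha$ takes the block form (\ref{alpha'}). I would denote the lower-right $2\times 2$ block by $A = \begin{bmatrix} a_{22} & a_{23} \\ a_{32} & a_{33}\end{bmatrix}$, with the remaining off-diagonal entries $a_{12}, a_{13}$ appearing in the first row, and $a_{11} \neq 0$ ensuring invertibility of the first column.

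A direct computation yields $v_2^\alpha \wedge v_3^\alpha = |A|\cdot (v_2 \wedge v_3)$, so the image $\langle v_1^\pi\rangle = \langle v_2\wedge v_3\rangle$ is \emph{automatically} $\hat{\alpha}$-invariant, in sharp contrast to Proposition~\ref{auto1'}, where requiring invariance of the image forced $a_{13} = a_{23} = 0$. Consequently, the sole remaining condition $v_1^{\pi\hat{\alpha}} = v_1^{\alpha\pi}$ reduces to $|A|\cdot(v_2\wedge v_3) = a_{11}(v_2\wedge v_3)$, i.e.\ $a_{11} = |A|$. Thus $\alpha \in \Aut^c(\pi)$ exactly when $\alpha$ has the shape asserted in the statement, with $A \in \GL_2(\mathbb{F}_p)$ arbitrary and $a_{12}, a_{13}\in \mathbb{F}_p$ free.

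For the semidirect product decomposition, I would define the obvious map $\Aut^c(\pi) \twoheadrightarrow Q$ sending $\alpha$ to its block-diagonal part (killing the row entries $a_{12}, a_{13}$); this is clearly a surjective homomorphism with kernel precisely $P$. A short conjugation computation then shows that for $q \in Q$ built from $A$ and $p \in P$ recording the row $(b, c)$, one has $qpq^{-1} \in P$ with new row $(b, c) A^{-1}$ (the scalar $|A|$ in the $(1,1)$-entry cancels). This confirms $P \trianglelefteq \Aut^c(\pi)$, and together with the obvious $P \cap Q = 1$ and the factorization $\alpha = p\cdot q$ with $q = \begin{bmatrix} |A| & 0 \\ 0 & A\end{bmatrix}$, gives $\Aut^c(\pi) = P \rtimes Q$.

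I do not expect any real obstacle: the one conceptual point is that, unlike in Proposition~\ref{auto1'}, the image of $\pi$ is contained in $\Lambda^2\ker(\pi)$, so invariance of the image imposes no additional constraint on $\alpha$ beyond invariance of $\ker(\pi)$. This is precisely why $\Aut^c(\pi)$ turns out to be genuinely larger here, with the $Q$-block being (a copy of) all of $\GL_2(\mathbb{F}_p)$ rather than a split torus.
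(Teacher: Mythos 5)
Your argument is correct and follows the paper's own proof essentially verbatim: reduce to the block form forced by invariance of $\ker(\pi)$, observe that the single remaining condition $v_1^{\pi\hat{\alpha}} = v_1^{\alpha\pi}$ amounts to $a_{11} = \lvert A\rvert$ (the image $\langle v_2\wedge v_3\rangle$ being automatically $\hat{\alpha}$-invariant), and split off $Q$ as the image of the block-diagonal projection with kernel $P$. The only quibble is in your redundant conjugation check: with the paper's row-vector convention one gets the new row $\lvert A\rvert\,(b,c)A^{-1}$ rather than $(b,c)A^{-1}$, i.e.\ the scalar does not cancel, but this is immaterial since the conjugate still lies in $P$ and normality of $P$ already follows from its being a kernel.
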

\begin{proof}
Let $\alpha\in \GL(V)$ be as in (\ref{alpha'}). We have
\begin{align*}
 v_1^{\pi\hat{\alpha}}= v_1^{\alpha\pi} &\iff v_2^{a_{22}}v_3^{a_{23}}  \wedge v_2^{a_{32}}v_3^{a_{33}} = (v_2\wedge v_3)^{a_{11}}\\
 & \iff \begin{vmatrix}a_{22} &a_{23}\\ a_{32} & a_{33}\end{vmatrix} =a_{11}.\end{align*}
 It follows that $ \alpha \in \Aut^c(\pi)$ if and only if $\alpha$ has the form stated below. It is not hard to check that we have a surjective homomorphism
\begin{align*}
\Aut^c(\pi) & \twoheadrightarrow Q\\
\begin{bmatrix}
\lvert\begin{smallmatrix}
a_{22} & a_{23}\\a_{32} & a_{33}
\end{smallmatrix}\rvert&a_{12}& a_{13}\\
0 & a_{22} & a_{23}\\
0 & a_{32} & a_{33}
\end{bmatrix}&\mapsto \begin{bmatrix}
\lvert\begin{smallmatrix}
a_{22} & a_{23}\\a_{32} & a_{33}
\end{smallmatrix}\rvert& 0 & 0\\
0 & a_{22} & a_{23}\\
0 & a_{32} & a_{33}
\end{bmatrix}\end{align*}
with kernel $P$, and we deduce that $\Aut^c(\pi) = P\rtimes Q$.
\end{proof}

\subsection{4-dimensional case}
In this subsection, we take $n=4$. Then
\[ \dim_{\mathbb{F}_p}(V) = 4,\,\ \dim_{\mathbb{F}_p}(\Lambda^2V) ={4\choose 2} = 6,\]
and the groups (\ref{Gpi}) that we construct are of order $p^{10}$. For $\pi$ of rank one, there are only three possibilities up to a change of basis.

\begin{prop}\label{rank one prop}Let $\pi : V\rightarrow\Lambda^2V$ be a linear map of rank one. 

There exists a basis $v_1,v_2,v_3,v_4$ of $V$ such that
\[ v_2^\pi = v_3^\pi = v_4^\pi =1\mbox{ and } v_1^\pi=\begin{cases}
(v_1\wedge v_2),\\
(v_3\wedge v_4),\\
(v_1\wedge v_2)(v_3\wedge v_4).
\end{cases}\]
\end{prop}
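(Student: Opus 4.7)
The plan is to mimic the proof of Proposition~3.5, splitting into cases according to the tensor rank of $\omega := v_0^\pi$, where $v_0$ is any vector outside $K := \ker\pi$. The rank-one hypothesis gives $\dim K = 3$ and $V^\pi = \langle \omega\rangle$. By the remark on the canonical form of antisymmetric matrices, $\omega$ is a product of at most $\lfloor 4/2\rfloor = 2$ decomposable $2$-vectors, so there are precisely two possibilities to treat: $\omega$ is itself a single wedge, or $\omega$ requires exactly two decomposable summands.

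First, suppose $\omega = u \wedge u'$ is decomposable, and set $U = \langle u, u'\rangle$. This plane is determined by $\omega$ alone (as the subspace of all $v \in V$ with $v \wedge \omega = 1$), so this case is intrinsic. Since $\dim U + \dim K = 5 > \dim V$, the intersection $U \cap K$ has dimension $1$ or $2$. When $\dim(U \cap K) = 1$, I would pick $v_2$ spanning $U \cap K$, extend to a basis $\{v_2, w\}$ of $U$ with $w \in U \setminus K$, and use the scalar relation $w^\pi \in \langle w \wedge v_2\rangle$ to rescale $v_2$ so that $w^\pi = w \wedge v_2$; then extending $v_2$ to a basis of $K$ with two further vectors $v_3, v_4$ and setting $v_1 = w$ yields the first option. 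When $U \subseteq K$ instead, I would set $v_3 = u$, $v_4 = u'$, $v_1 = v_0$, and pick any $v_2 \in K \setminus U$ to realise the second option $v_1^\pi = v_3 \wedge v_4$.

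The main case is when $\omega$ is not decomposable. Because $v_0 \notin K$ and $K$ has codimension one, one has the direct-sum decomposition $\Lambda^2 V = \Lambda^2 K \oplus (v_0 \wedge K)$, so I can write $\omega = (v_0 \wedge k)(k_1 \wedge k_2)$ with $k, k_1, k_2 \in K$, using also that $\dim \Lambda^2 K = 3$ forces every element of $\Lambda^2 K$ to be a single wedge (this is the $n = 3$ case of the remark). The key step, and the main obstacle of the proof, is to show that $\{k, k_1, k_2\}$ is a basis of $K$: if $k_1, k_2$ were linearly dependent, then $\omega = v_0 \wedge k$ would already be decomposable; if $k_1, k_2$ were independent but $k = ak_1 + bk_2$, a short explicit manipulation (for instance, when $a, b \neq 0$ one checks that $\omega = (bv_0 - k_1 - (2b/a)k_2) \wedge (av_0 - k_2)$, with the edge cases $a = 0$ or $b = 0$ handled even more easily) exhibits $\omega$ as a single wedge, contradicting non-decomposability. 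Once linear independence is established, setting $v_1 = v_0$, $v_2 = k$, $v_3 = k_1$, $v_4 = k_2$ gives a basis of $V$ realising the third option $v_1^\pi = (v_1\wedge v_2)(v_3\wedge v_4)$, and $v_2, v_3, v_4 \in K$ gives $v_2^\pi = v_3^\pi = v_4^\pi = 1$, completing the proof.
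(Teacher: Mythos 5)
Your argument is correct, and while the decomposable case runs exactly as in the paper (split according to whether $\dim(U\cap\ker\pi)$ is $1$ or $2$), your treatment of the indecomposable case takes a genuinely different route. The paper first writes $\omega=\omega_1\omega_2$ as a product of two wedges coming from complementary planes $U_1,U_2$ (via the canonical form), then examines $\dim(U_i\cap\ker\pi)$ and, in the awkward case where both intersections are $1$-dimensional, performs an explicit rewriting $\omega=(u_1\wedge u_2u_4)(u_1^{-1}u_3\wedge u_4)$ to replace the pair $(U_1,U_2)$ by one with $U_2\subseteq\ker\pi$. You instead decompose the ambient space first, using $\Lambda^2V=\Lambda^2K\oplus(v_0\wedge K)$ for $K=\ker\pi$ and a fixed $v_0\notin K$, which immediately produces a decomposition $\omega=(v_0\wedge k)(k_1\wedge k_2)$ already adapted to the kernel; the only thing left to check is that $k,k_1,k_2$ are independent, which you reduce to decomposability of $\omega$ via the identity $(bv_0-k_1-(2b/a)k_2)\wedge(av_0-k_2)=v_0\wedge(ak_1+bk_2)+k_1\wedge k_2$ (which I verified, including the edge cases $a=0$ or $b=0$, and the degenerate possibilities $k=0$ or $k_1\wedge k_2=1$ are also absorbed into decomposability). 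Your route buys a cleaner logical structure --- the normalization relative to $\ker\pi$ is built in from the start rather than achieved by massaging an arbitrary two-wedge decomposition --- at the cost of one explicit computation; the paper's route stays closer to the canonical-form remark it had already set up and generalizes more visibly to the philosophy of ``choose the decomposition $\omega_1\omega_2$ wisely.'' Either is a complete proof.
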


\begin{proof}The hypothesis means that 
\[ \dim_{\mathbb{F}_p}(\ker(\pi))=3 \mbox{ and } \dim_{\mathbb{F}_p}(V^\pi)=1.\]
Let $\omega$ be a generator of $V^\pi$. We know that either $\omega$ is a single wedge product or a product of two wedge products because $n=4$.

Assume first that $\omega$ is a single wedge product. Then $\omega\in U\wedge U$ for some $2$-dimensional subspace $U$ of $V$. It is not hard to see that there exists a basis $v_1,v_2,v_3,v_4$ of $V$ such that
\[ v_1^\pi = (v_1\wedge v_2)\mbox{ with }U = \langle v_1,v_2\rangle \mbox{ and }\ker(\pi) =\langle v_2,v_3,v_4\rangle\]
when $U\cap \ker(\pi) = \langle v_2\rangle$ has dimension $1$, and
\[ v_1^\pi = (v_3\wedge v_4)\mbox{ with }U = \langle v_3,v_4\rangle\mbox{ and } \ker(\pi)= \langle v_2,v_3,v_4\rangle\]
when $U\cap \ker(\pi) = \langle v_3,v_4\rangle$ has dimension $2$.

Assume next that $\omega = \omega_1\omega_2$ is a product of two wedge products $\omega_1$ and $\omega_2$, where $\omega_1\in U_1\wedge U_1$ and $\omega_2\in U_2\wedge U_2$ for some $2$-dimensional subspaces $U_1$ and $U_2$ of $V$. We may assume that $U_1\cap U_2$ is trivial, for otherwise $\omega$ is expressible as a single wedge product. In the case that
\[ \dim_{\mathbb{F}_p}(U_1\cap \ker(\pi)) =  \dim_{\mathbb{F}_p}(U_2\cap \ker(\pi)) = 1,\]
we can find a basis $u_1,u_2$ of $U_1$ and a basis $u_3,u_4$ of $U_2$ such that
\[ \omega_1 = (u_1\wedge u_2),\,\ \omega_2 = (u_3\wedge u_4), \,\ \ker(\pi) = \langle u_2,u_4,u_1^{-1}u_3\rangle.\]
But notice that we can rewrite
\[ \omega = \omega_1\omega_2 = (u_1\wedge u_2u_4)(u_1^{-1}u_3\wedge u_4).\]
Thus, by changing the choice of $\omega_1,\omega_2$, we may assume that
\[ \dim_{\mathbb{F}_p}(U_1\cap \ker(\pi)) = 1\mbox{ and } \dim_{\mathbb{F}_p}(U_2\cap \ker(\pi)) = 2.\]
We then deduce that 
\[ v_1^\pi = (v_1\wedge v_2)(v_3\wedge v_4)\mbox{ with } 
\begin{cases}U_1 = \langle v_1,v_2\rangle\\
U_2 = \langle v_3,v_4\rangle
\end{cases}\hspace{-1em}
\mbox{ and } \ker(\pi) =\langle v_2,v_3,v_4\rangle
 \]
 for a suitable basis $v_1,v_2,v_3,v_4$ of $V$.
\end{proof}

Next, we compute $\Aut^c(\pi)$ for the three linear maps $\pi : V\rightarrow \Lambda^2V$ of rank one in Proposition \ref{rank one prop}. We shall fix a basis $v_1,v_2,v_3,v_4$ of $V$ and write elements of $\GL(V)$ in matrix form
\[ \alpha = \begin{bmatrix}
a_{11} & a_{12} & a_{13} & a_{14}\\
a_{21} & a_{22} & a_{23} & a_{24}\\
a_{31} & a_{32} & a_{33} & a_{34}\\
a_{41} & a_{42} & a_{43} & a_{44}
\end{bmatrix}\iff  
\begin{array}{c}
v_1^\alpha = v_1^{a_{11}} v_2^{a_{12}} v_3^{a_{13}} v_4^{a_{14}}, \\
v_2^\alpha  = v_1^{a_{21}} v_2^{a_{22}} v_3^{a_{23}} v_4^{a_{24}},\\
v_3^\alpha  = v_1^{a_{31}} v_2^{a_{32}} v_3^{a_{33}} v_4^{a_{34}},\\
v_4^\alpha  = v_1^{a_{41}} v_2^{a_{42}} v_3^{a_{43}} v_4^{a_{44}}.
\end{array}\]
For the $\alpha\in \GL(V)$ such that $\pi\hat{\alpha} = \alpha\pi$, clearly $\ker(\pi)= \langle v_2,v_3,v_4\rangle$ is an invariant subspace of $\alpha$, which means that
\begin{equation}\label{alpha}
 \alpha = \begin{bmatrix}
a_{11} & a_{12} & a_{13} &a_{14}\\
0 & a_{22} & a_{23} &a_{24}\\
0 & a_{32} & a_{33}  &a_{34}\\
0 & a_{42} & a_{43} & a_{44}
\end{bmatrix}.
 \end{equation}
 Note that then $a_{11}\neq 0$ since $\alpha$ is invertible, and we have $\alpha \in \Aut^c(\pi)$ exactly when $v_1^{\pi\hat{\alpha}}= v_1^{\alpha\pi}$. Moreover, in order for  $v_1^{\pi\hat{\alpha}}= v_1^{\alpha\pi}$ to hold, the image $\langle v_1^\pi\rangle$ of $\pi$ must be invariant under $\hat{\alpha}$.
 

\begin{prop}\label{auto1}Let $\pi :V \rightarrow\Lambda^2 V$ be the linear map defined by
\[ v_2^\pi = v_3^\pi=v_4^\pi =1\mbox{ and }v_1^\pi = (v_1\wedge v_2).\]
Then $\Aut^c(\pi) = P\rtimes Q$ is a semidirect product of the subgroups
\begin{align*}
P & = \left\{ 
\begin{bmatrix}
1 & b & 0 & 0 \\
0 & 1 & 0 & 0\\
0 & c & 1 & 0\\
0 & d & 0 & 1
\end{bmatrix} : b,c,d\in \mathbb{F}_p
\right\},\\
Q & = \left\{ 
\begin{bmatrix}
s & 0 & \begin{matrix} 0 & 0 \end{matrix}\\
0 & 1 & \begin{matrix} 0 & 0 \end{matrix}\\
\begin{matrix} 0 \\ 0 \end{matrix} & \begin{matrix} 0 \\ 0 \end{matrix} & A
\end{bmatrix}: s\in\mathbb{F}_p^\times,\, A \in \GL_2(\mathbb{F}_p)
\right\}.
\end{align*}
\end{prop}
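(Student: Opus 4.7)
My plan is to follow exactly the strategy of Proposition~\ref{auto1'}, now in dimension four. Starting from the shape~(\ref{alpha}) of $\alpha\in\GL(V)$, which is already forced by $\alpha$-invariance of $\ker(\pi)=\langle v_2,v_3,v_4\rangle$, the remaining condition is the single equation $v_1^{\pi\hat\alpha}=v_1^{\alpha\pi}$. This reads $v_1^\alpha\wedge v_2^\alpha=(v_1\wedge v_2)^{a_{11}}$, where the right-hand side is nonzero because $a_{11}\neq 0$. Since the left-hand side is then a decomposable $2$-vector and a nonzero multiple of $v_1\wedge v_2$, a standard fact about decomposable $2$-vectors forces $\langle v_1^\alpha,v_2^\alpha\rangle=\langle v_1,v_2\rangle$, which translates into $a_{13}=a_{14}=a_{23}=a_{24}=0$.

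With $\alpha$ now block-upper-triangular, the wedge reduces to $v_1^\alpha\wedge v_2^\alpha=a_{11}a_{22}(v_1\wedge v_2)$, and the equation $a_{11}a_{22}=a_{11}$ together with $a_{11}\neq 0$ yields $a_{22}=1$. Hence $\alpha$ has exactly the shape displayed in the statement: a free scalar $a_{11}\in\F_p^\times$, free column-$2$ entries $a_{12},a_{32},a_{42}\in\F_p$, and an arbitrary invertible block $A=\bigl[\begin{smallmatrix}a_{33}&a_{34}\\a_{43}&a_{44}\end{smallmatrix}\bigr]\in\GL_2(\F_p)$ in the lower-right corner.

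To assemble the semidirect product: every such $\alpha$ factors as $\alpha=pq$ with $q\in Q$ carrying $a_{11}$ and $A$, and $p\in P$ carrying $(a_{12},a_{32},a_{42})$, which can be read off by a direct matrix multiplication; $P\cap Q=\{I\}$ is immediate by comparing entries. The one substantive point is normality of $P$, which I would verify by a direct computation of $qpq^{-1}$ for $q\in Q$ with scalar $s$ and block $A$ and $p\in P$ with parameters $(b,c,d)$: the conjugate stays in $P$, with $b$ rescaled by $s^{-1}$ and $(c,d)$ transformed linearly by $A^{-1}$. The small obstacle, if any, is just the wedge-product bookkeeping in the first step; everything else is routine linear algebra, essentially identical to the three-dimensional case.
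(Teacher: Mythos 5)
Your argument is correct and follows essentially the same route as the paper's proof: invariance of $\ker(\pi)$ gives the shape (\ref{alpha}), the decomposability of $v_1^\alpha\wedge v_2^\alpha$ forces $\langle v_1,v_2\rangle$ to be $\alpha$-invariant (hence $a_{13}=a_{14}=a_{23}=a_{24}=0$), and then $a_{22}=1$; the paper packages the semidirect product as the splitting of a surjection $\Aut^c(\pi)\twoheadrightarrow Q$ with kernel $P$, whereas you check $P\cap Q=1$, the factorization $\alpha=pq$, and normality of $P$ by direct conjugation, which is an equivalent bookkeeping of the same decomposition (only the exact twisting of $(b,c,d)$ under conjugation depends on whether one computes $qpq^{-1}$ or $q^{-1}pq$, and is immaterial).
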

\begin{proof}Let $\alpha\in \GL(V)$ be as in (\ref{alpha}). For $\langle v_1^\pi\rangle$ to be invariant under $\hat{\alpha}$, necessarily $\langle v_1,v_2\rangle$ is invariant under $\alpha$. This means that 
\[a_{13} = a_{14} = a_{23} = a_{24} =0,\]
and we see that
\begin{align*}
 v_1^{\pi\hat{\alpha}}= v_1^{\alpha\pi}&\iff v_1^{a_{11}}v_2^{a_{12}} \wedge v_2^{a_{22}} = (v_1\wedge v_2)^{a_{11}}\\& \iff a_{22} = 1.
 \end{align*}
 It follows that $ \alpha \in \Aut^c(\pi)$ if and only if $\alpha$ has the form stated below. It is not hard to check that we have a surjective homomorphism
\begin{align*}
\Aut^c(\pi) & \twoheadrightarrow Q\\
\begin{bmatrix}
a_{11} & a_{12} & 0 & 0\\
0 & 1 & 0 & 0 \\
0 & a_{32} & a_{33} & a_{34} \\
0 & a_{42} & a_{43} & a_{44}
\end{bmatrix}&\mapsto  \begin{bmatrix}
a_{11} & 0 & 0 & 0\\
0 & 1 & 0 & 0 \\
0 & 0& a_{33} & a_{34} \\
0 & 0 & a_{43} & a_{44}
\end{bmatrix}
\end{align*}
with kernel $P$, and we deduce that $\Aut^c(\pi) = P\rtimes Q$.
\end{proof}

\begin{prop}\label{auto2}Let $\pi :V \rightarrow\Lambda^2 V$ be the linear map defined by
\[ v_2^\pi = v_3^\pi=v_4^\pi =1\mbox{ and }v_1^\pi = (v_3\wedge v_4).\]
 Then $\Aut^c(\pi) = P\rtimes Q$ is a semidirect product of the subgroups
\begin{align*}
P & = \left\{ 
\begin{bmatrix}
1 & b & c & e \\
0 & 1 &d & f\\
0 & 0 & 1 & 0\\
0 & 0 & 0 & 1
\end{bmatrix} : b,c,d,e,f\in \mathbb{F}_p
\right\},\\
Q & =\left\{ 
\begin{bmatrix}
|A| & 0 & \begin{matrix} 0 & 0 \end{matrix}\\
0 & s & \begin{matrix} 0 & 0 \end{matrix}\\
\begin{matrix} 0 \\ 0 \end{matrix} & \begin{matrix} 0 \\ 0 \end{matrix} & A
\end{bmatrix}: s\in \mathbb{F}_p^\times,\, A \in \GL_2(\mathbb{F}_p)
\right\}.
\end{align*}
\end{prop}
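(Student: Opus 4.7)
The plan is to mirror the proofs of Propositions \ref{auto1'}, \ref{auto2'}, and \ref{auto1}. I would start with an arbitrary $\alpha \in \GL(V)$ of the form (\ref{alpha}), which is forced by the invariance of $\ker(\pi) = \langle v_2, v_3, v_4\rangle$ under $\alpha$. Then I would spell out the condition $\alpha \in \Aut^c(\pi)$ as the single equation
\[ v_3^\alpha \wedge v_4^\alpha = v_1^{\pi\hat\alpha} = v_1^{\alpha\pi} = (v_3 \wedge v_4)^{a_{11}} \]
in $\Lambda^2 V$. Expanding the left-hand side using the explicit expressions of $v_3^\alpha$ and $v_4^\alpha$ from (\ref{alpha}) and comparing coefficients against the right-hand side yields three scalar equations, namely $a_{32}a_{43} - a_{33}a_{42} = 0$ (coefficient of $v_2 \wedge v_3$), $a_{32}a_{44} - a_{34}a_{42} = 0$ (coefficient of $v_2 \wedge v_4$), and $a_{33}a_{44} - a_{34}a_{43} = a_{11}$ (coefficient of $v_3 \wedge v_4$).

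The step I expect to be the main obstacle is deducing from these three equations, together with $a_{11} \neq 0$, that $a_{32} = a_{42} = 0$. This requires a brief case analysis: if $a_{32} \neq 0$, then the first two equations give $a_{43} = (a_{42}/a_{32})\, a_{33}$ and $a_{44} = (a_{42}/a_{32})\, a_{34}$, which collapses the third equation to $0 = a_{11}$ and contradicts $a_{11} \neq 0$; so $a_{32} = 0$, and then $a_{42} \neq 0$ would force $a_{33} = a_{34} = 0$ via the same two equations, again contradicting $a_{11} \neq 0$. Conceptually, this is just the observation that $v_3^\alpha \wedge v_4^\alpha$ being a nonzero scalar multiple of the decomposable $2$-vector $v_3 \wedge v_4$ forces $\langle v_3, v_4\rangle$ to be $\alpha$-invariant.

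Once $a_{32} = a_{42} = 0$ is established, the remaining condition is simply $\det A = a_{11}$, where $A$ is the bottom-right $2 \times 2$ block $\bigl(\begin{smallmatrix} a_{33} & a_{34} \\ a_{43} & a_{44}\end{smallmatrix}\bigr)$. This matches the claimed shape of elements of $\Aut^c(\pi)$, with $a_{22} \in \mathbb{F}_p^\times$ (needed for invertibility of $\alpha$), $A \in \GL_2(\mathbb{F}_p)$, and the entries $a_{12}, a_{13}, a_{14}, a_{23}, a_{24}$ arbitrary. Finally, in complete parallel with the earlier propositions, I would construct the surjective homomorphism $\Aut^c(\pi) \twoheadrightarrow Q$ that zeroes out the entries $a_{12}, a_{13}, a_{14}, a_{23}, a_{24}$ and keeps the block-diagonal triple $(\det A,\, a_{22},\, A)$; its kernel is exactly $P$, and a direct matrix multiplication confirms that $Q$ normalizes $P$, yielding $\Aut^c(\pi) = P \rtimes Q$.
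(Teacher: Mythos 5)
Your proposal is correct and follows essentially the same route as the paper: reduce to the form (\ref{alpha}), translate $v_1^{\pi\hat\alpha}=v_1^{\alpha\pi}$ into the two vanishing $2\times 2$ determinants plus $\det A = a_{11}$, deduce $a_{32}=a_{42}=0$ from $a_{11}\neq 0$, and then exhibit the surjection onto $Q$ with kernel $P$. Your case analysis for $a_{32}=a_{42}=0$ is just a slightly more explicit version of the paper's one-line observation, so there is nothing to add.
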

\begin{proof}Let $\alpha\in \GL(V)$ be as in (\ref{alpha}). We have
\begin{align*}
v_1^{\pi\hat{\alpha}} =v_1^{\alpha\pi} & \iff   v_2^{a_{32}} v_3^{a_{33}} v_4^{a_{34}}\wedge v_2^{a_{42}} v_3^{a_{43}} v_4^{a_{44}} = (v_3\wedge v_4)^{a_{11}}\\
& \iff  \begin{vmatrix}
a_{32} & a_{33}\\
a_{42} & a_{43}
\end{vmatrix} =\begin{vmatrix} 
a_{32} & a_{34}\\
a_{42} & a_{44}
\end{vmatrix} =0,\, 
 a_{11}=\begin{vmatrix}
a_{33} & a_{34}\\
a_{43} & a_{44}
\end{vmatrix} .
\end{align*}
Observe that then $(a_{32},a_{42}) = (0,0)$ must hold, for otherwise $(a_{33},a_{43})$ and $(a_{34},a_{44})$ would be scalar multiples of each other by the first relation, and so $a_{11} = 0$ by the second relation. It follows that $\alpha\in \Aut^c(\pi)$ if and only if $\alpha$ has the form stated below. It is not hard to check that we have a surjective homomorphism
\begin{align*}
\Aut^c(\pi) & \twoheadrightarrow Q\\
\begin{bmatrix}
\lvert\begin{smallmatrix}
a_{33} & a_{34}\\
a_{43} & a_{44}
\end{smallmatrix}\rvert & a_{12} & a_{13} & a_{14}\\
0 & a_{22} & a_{23} & a_{24} \\
0 & 0 & a_{33} & a_{34} \\
0 & 0 & a_{43} & a_{44}
\end{bmatrix}&\mapsto \begin{bmatrix}
\lvert\begin{smallmatrix}
a_{33} & a_{34}\\
a_{43} & a_{44}
\end{smallmatrix}\rvert & 0 & 0& 0\\
0 & a_{22} & 0& 0 \\
0 & 0 & a_{33} & a_{34} \\
0 & 0 & a_{43} & a_{44}
\end{bmatrix}
\end{align*}
with kernel $P$, and we deduce that $\Aut^c(\pi) = P\rtimes Q$.
\end{proof}

\begin{prop}\label{auto3}Let $\pi :V \rightarrow\Lambda^2 V$ be the linear map defined by
\[ v_2^\pi = v_3^\pi = v_4^\pi = 1 \mbox{ and }v_1^\pi = (v_1\wedge v_2)(v_3\wedge v_4).\]
Then $\Aut^c(\pi) = P\rtimes Q$ is a semidirect product of the subgroups
\begin{align*}
P & = \left\{ 
\begin{bmatrix}
1 & b & -d & c\\
0 & 1 & 0 & 0\\
0 & c & 1 & 0\\
0 & d & 0 & 1
\end{bmatrix} : b,c,d\in \mathbb{F}_p
\right\},\\
Q & =\left\{ 
\begin{bmatrix}
|A| & 0 & \begin{matrix} 0 & 0 \end{matrix}\\
0 & 1 & \begin{matrix} 0 & 0 \end{matrix}\\
\begin{matrix} 0 \\ 0 \end{matrix} & \begin{matrix} 0 \\ 0 \end{matrix} & A
\end{bmatrix}: A \in \GL_2(\mathbb{F}_p)
\right\}.
\end{align*}
\end{prop}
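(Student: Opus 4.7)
The plan is to proceed exactly as in Propositions \ref{auto1} and \ref{auto2}. I take $\alpha\in\GL(V)$ of the form (\ref{alpha}), so that $\ker(\pi) = \langle v_2,v_3,v_4\rangle$ is $\alpha$-invariant and $a_{11}\neq 0$, and I impose the single defining equation $v_1^{\pi\hat\alpha} = v_1^{\alpha\pi}$. The right-hand side is easy: since $v_2,v_3,v_4 \in \ker(\pi)$, one has $v_1^{\alpha\pi} = v_1^{a_{11}\pi} = (v_1\wedge v_2)^{a_{11}}(v_3\wedge v_4)^{a_{11}}$. For the left-hand side, I expand $(v_1^\alpha\wedge v_2^\alpha)(v_3^\alpha\wedge v_4^\alpha)$ in the basis $\{v_i\wedge v_j:i<j\}$ of $\Lambda^2V$ and match coefficients.

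The coefficients of $v_1\wedge v_2$, $v_1\wedge v_3$, $v_1\wedge v_4$ come only from $v_1^\alpha\wedge v_2^\alpha$ (since $v_3^\alpha,v_4^\alpha$ carry no $v_1$-component), and they immediately force $a_{22}=1$ and $a_{23}=a_{24}=0$. The coefficients of $v_2\wedge v_3$, $v_2\wedge v_4$, $v_3\wedge v_4$ receive contributions from \emph{both} wedge products, and after substituting the previous relations they yield
\begin{align*}
a_{13} &= a_{32}a_{43}-a_{33}a_{42},\\
a_{14} &= a_{32}a_{44}-a_{34}a_{42},\\
a_{11} &= a_{33}a_{44}-a_{34}a_{43} = |A|,
\end{align*}
where $A=\begin{pmatrix}a_{33}&a_{34}\\ a_{43}&a_{44}\end{pmatrix}$. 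Since $a_{11}\neq 0$ forces $A\in\GL_2(\mathbb{F}_p)$, the free parameters of $\alpha\in\Aut^c(\pi)$ are exactly $a_{12},a_{32},a_{42}\in\mathbb{F}_p$ and $A\in\GL_2(\mathbb{F}_p)$.

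To obtain the semidirect product, I set up the natural projection $\alpha\mapsto q$ obtained by zeroing out $a_{12},a_{32},a_{42}$ (which, by the formulas above, automatically zeroes $a_{13},a_{14}$); the image lies in $Q$ and surjectivity is clear. Checking it is a homomorphism is routine. Its kernel consists of those $\alpha$ with $A=I$ and arbitrary $a_{12}=b$, $a_{32}=c$, $a_{42}=d$, in which case the formulas collapse to $a_{13}=-d$ and $a_{14}=c$, exactly reproducing the matrix description of $P$. Together with the obvious $P\cap Q=\{1\}$, this gives $\Aut^c(\pi)=P\rtimes Q$.

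The main obstacle is purely bookkeeping rather than conceptual: in Propositions \ref{auto1} and \ref{auto2} only one of the two basis wedges $(v_i\wedge v_j)$ contributing to $v_1^\pi$ was effectively present, so the coefficient equations decoupled cleanly, whereas here both wedges contribute simultaneously to three of the six coefficient equations. It is precisely this mixing that produces the "twist" $(b,c,d)\mapsto(b,-d,c)$ visible in the first row of the matrices describing $P$, and I expect the verification that the projection is a homomorphism to be the one step requiring the most care.
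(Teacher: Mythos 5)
Your proposal is correct and follows essentially the same route as the paper: impose $v_1^{\pi\hat\alpha}=v_1^{\alpha\pi}$ on $\alpha$ of the form (\ref{alpha}), match coefficients in the basis of $\Lambda^2V$ to get $a_{22}=1$, $a_{23}=a_{24}=0$, $a_{13}=\bigl|\begin{smallmatrix}a_{32}&a_{33}\\a_{42}&a_{43}\end{smallmatrix}\bigr|$, $a_{14}=\bigl|\begin{smallmatrix}a_{32}&a_{34}\\a_{42}&a_{44}\end{smallmatrix}\bigr|$, $a_{11}=|A|$, and then exhibit $Q$ as the image and $P$ as the kernel of the block-diagonal projection. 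The only cosmetic difference is that the paper first deduces $a_{23}=a_{24}=0$ from the $\alpha$-invariance of $\langle v_2\rangle$ before expanding, whereas you read it off from the $v_1\wedge v_3$ and $v_1\wedge v_4$ coefficients; the resulting conditions and the identification of $P$ (including the $(b,c,d)\mapsto(b,-d,c)$ first row) agree exactly.
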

\begin{proof}Let $\alpha\in \GL(V)$ be as in (\ref{alpha}). For $\langle v_1^\pi\rangle$ to be invariant under $\hat{\alpha}$, necessarily $\langle v_2\rangle$ is invariant under $\alpha$. This means that 
\[a_{23} = a_{24} =0,\]
and so we have
\begin{align*}
v_1^{\pi\hat{\alpha}}&= (v_1^{a_{11}} v_2^{a_{12}} v_3^{a_{13}} v_4^{a_{14}}\wedge  v_2^{a_{22}})(v_2^{a_{32}} v_3^{a_{33}} v_4^{a_{34}}\wedge v_2^{a_{42}} v_3^{a_{43}} v_4^{a_{44}} ),\\
v_1^{\alpha\pi}&=(v_1\wedge v_2)^{a_{11}}(v_3\wedge v_4)^{a_{11}}.
\end{align*}
We then see that $v_1^{\pi\hat{\alpha}} = v_1^{\alpha\pi}$ holds if and only if
\[a_{22} = 1,\, -a_{13}+\begin{vmatrix}
a_{32} & a_{33}\\
a_{42} & a_{43}
\end{vmatrix} = -a_{14} +\begin{vmatrix}
a_{32} & a_{34}\\
a_{42} & a_{44}
\end{vmatrix} =0,\,
 \begin{vmatrix}
a_{33} & a_{34}\\
a_{43} & a_{44}
\end{vmatrix} = a_{11}.\]
It follows that $\alpha\in \Aut^c(\pi)$ if and only if $\alpha$ has the form stated below. It is not hard to check that we have a surjective homomorphism
\begin{align*}
\Aut^c(\pi) & \twoheadrightarrow Q\\
\begin{bmatrix}
\lvert\begin{smallmatrix}
a_{33} & a_{34}\\
a_{43} & a_{44}
\end{smallmatrix}\rvert& a_{12} & \lvert\begin{smallmatrix}
a_{32} & a_{33}\\
a_{42} & a_{43}
\end{smallmatrix}\rvert & \lvert\begin{smallmatrix}
a_{32} & a_{34}\\
a_{42} & a_{44}
\end{smallmatrix}\rvert\\
0 & 1 & 0 & 0 \\
0 & a_{32} & a_{33} & a_{34} \\
0 & a_{42} & a_{43} & a_{44}
\end{bmatrix}&\mapsto \begin{bmatrix}
\lvert\begin{smallmatrix}
a_{33} & a_{34}\\
a_{43} & a_{44}
\end{smallmatrix}\rvert&0 &0 &0\\
0 & 1 & 0 & 0 \\
0 & 0 & a_{33} & a_{34} \\
0 & 0 & a_{43} & a_{44}
\end{bmatrix}\end{align*}
with kernel $P$, and we deduce that $\Aut^c(\pi) = P\rtimes Q$.
\end{proof}

\section{Proof of the main theorem}

Let $p$ be an odd prime and let $V$ be an $n$-dimensional vector space over $\mathbb{F}_p$ with basis $v_1,v_2,\dots,v_n$. The groups $G$ in (a),(b) are precisely those in (\ref{Gpi}), with $n=3$, associated to the linear maps
\begin{enumerate}[label = (\alph*)]
\item $\pi : V\rightarrow \Lambda^2V;\,$ $v_2^\pi = v_3^\pi  =1 $ and $v_1^\pi = (v_1\wedge v_2)$,
\item $\pi : V\rightarrow \Lambda^2V;\,$ $v_2^\pi = v_3^\pi =1 $ and $v_1^\pi = (v_2\wedge v_3)$.
\end{enumerate}
Similarly, the groups $G$ in (c),(d),(e) are precisely those in  (\ref{Gpi}), with $n=4$, associated to the linear maps
\begin{enumerate}[label = (\alph*)]\setcounter{enumi}{+2}
\item $\pi : V\rightarrow \Lambda^2V;\,$ $v_2^\pi = v_3^\pi = v_4^\pi =1 $ and $v_1^\pi = (v_1\wedge v_2)$,
\item $\pi : V\rightarrow \Lambda^2V;\,$ $v_2^\pi = v_3^\pi = v_4^\pi =1 $ and $v_1^\pi = (v_3\wedge v_4)$,
\item $\pi : V\rightarrow \Lambda^2V;\,$ $v_2^\pi = v_3^\pi = v_4^\pi =1 $ and $v_1^\pi = (v_1\wedge v_2)(v_3\wedge v_4)$.
\end{enumerate}
By Propositions \ref{rank one prop'} and \ref{rank one prop}, for $n=3,4$ and up to a change of basis, these are the only linear maps $\pi$ of rank one.

In this section, let us take $n=3,4$ and the symbol $\pi$ denotes one of the five linear maps above. As explained in Section \ref{group section}, we may identify
\[ G/G' = V\mbox{ and }G'=\Lambda^2V.\]
Moreover, we have a natural isomorphism
\[ \Aut^c(G) \simeq \Aut^c(\pi).\]
With these identifications, we may rephrase (\ref{Delta2}) as
\begin{equation}\label{Delta3}
\Delta(u^\alpha,v^\alpha) = \Delta(u,v)^{\hat{\alpha}}
\end{equation}
for all $u,v\in V$ and $\alpha\in\Aut^c(\pi)$. The $S$ and $S'$ in Section \ref{bilinear form sec} become
\begin{align*}
S &=  \{\mbox{symmetric bilinear $\Delta :V\times V\rightarrow \Lambda^2V$ satisfying (\ref{Delta3})}\}\\
S' &= \{\mbox{anti-symmetric bilinear  $\Delta :V\times V\rightarrow \Lambda^2V$ satisfying (\ref{Delta3})}\}
\end{align*}
in the current setting. The group $\Aut^c(\pi)$ was computed in Section \ref{group section}. Let $P$ and $Q$ denote the subgroups defined there. Then, we have
\[\Aut^c(\pi) = P\rtimes Q.\]
We shall also make the following assumption.

\begin{assume}Assume that $p\geq 5$ in the cases (a),(c),(e).
\end{assume}

We first show that the groups $G$ in question satisfy Assumption \ref{assumption} so that the discussion thereafter applies.  
\begin{lemma}\label{gamma lemma}
Let $\gamma : V\rightarrow\Aut^c(\pi)$ be an $\Aut^c(\pi)$-equivariant homomorphism and let $1\leq i,j\leq n$. Suppose that
\begin{enumerate}[label = $(\arabic*)$]
\item $\gamma(v_i)=1$,
\item $v_i^\alpha = v_iv_j$ for some $\alpha\in \Aut^c(\pi)$.
\end{enumerate}
Then $\gamma(v_j)=1$ also holds.
\end{lemma}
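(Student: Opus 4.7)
The statement is essentially a one-line computation from the definitions, so the plan is straightforward.

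The plan is to apply equivariance of $\gamma$ at the element $v_i$ using the automorphism $\alpha$ furnished by hypothesis (2). By the $\Aut^c(\pi)$-equivariance property of $\gamma$, we have $\gamma(v_i^\alpha) = \gamma(v_i)^\alpha$. Hypothesis (2) identifies the left-hand side as $\gamma(v_i v_j)$, while hypothesis (1) forces the right-hand side to be $1^\alpha = 1$. Thus $\gamma(v_i v_j) = 1$.

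Next, I would invoke the fact that $\gamma$ is a homomorphism (here the domain $V = G/G'$ is written multiplicatively, since it is identified with $G/G'$). Since $\gamma(v_i v_j) = \gamma(v_i)\gamma(v_j)$ and $\gamma(v_i) = 1$ by hypothesis (1), this immediately yields $\gamma(v_j) = 1$.

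There is no real obstacle here; the whole argument is a direct chain of equalities
\[
\gamma(v_j) = \gamma(v_i)^{-1}\gamma(v_i v_j) = \gamma(v_i^\alpha) = \gamma(v_i)^\alpha = 1^\alpha = 1,
\]
using in turn the homomorphism property, hypothesis (1) on the first factor, hypothesis (2), equivariance, and hypothesis (1) again. The lemma is really a bookkeeping device: it will be used in subsequent arguments to propagate the vanishing of $\gamma$ from one basis vector to another whenever the orbit of $v_i$ under $\Aut^c(\pi)$ contains $v_i v_j$, so that one can eventually conclude $\gamma \equiv 1$ on a spanning set and thereby verify Assumption~\ref{assumption} for each of the five linear maps $\pi$ under consideration.
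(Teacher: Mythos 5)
Your proof is correct and is essentially identical to the paper's: both apply $\Aut^c(\pi)$-equivariance to get $\gamma(v_i^\alpha)=\gamma(v_i)^\alpha=1$, then use the homomorphism property to split $\gamma(v_iv_j)=\gamma(v_i)\gamma(v_j)$ and conclude $\gamma(v_j)=1$. No issues.
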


\begin{proof}Indeed, we have
\[ 1 = \gamma(v_i)^\alpha = \gamma(v_i^\alpha) = \gamma(v_i)\gamma(v_j) = \gamma(v_j)\]
by the hypotheses.
\end{proof}

\begin{prop}
  There is no non-trivial $\Aut^c(\pi)$-equivariant homomorphism from $V$ to $\Aut^c(\pi)$.
\end{prop}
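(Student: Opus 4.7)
My plan is to prove, in each of the five cases (a)--(e), that any such $\gamma$ must send every basis vector $v_i$ to the identity of $\Aut^c(\pi)$; by multiplicativity, this forces $\gamma \equiv 1$. The tools are the explicit semidirect decomposition $\Aut^c(\pi) = P \rtimes Q$ established in Propositions \ref{auto1'}--\ref{auto3} and Lemma \ref{gamma lemma}, which allows us to propagate vanishing from one basis vector to another.

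Step 1 is to observe that the image $\gamma(V)$ sits inside the elementary abelian $p$-group $P$. Since $V$ has exponent $p$, each $\gamma(v_i)$ is a $p$-element of $\Aut^c(\pi)$. In cases (a) and (c), where $Q$ already has order coprime to $p$, this is automatic. In cases (b), (d), (e), where $Q$ involves $\GL_2(\mathbb{F}_p)$, one projects the equivariance identity $\gamma(v_i^\alpha) = \gamma(v_i)^\alpha$ down to $Q$ and applies it with $\alpha$ ranging over $\SL_2$-type elements (to force the $Q$-component to commute with a suitably large subgroup, hence to be central) together with scaling matrices (to pin it down to the identity).

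Step 2 is a scaling argument inside $P$, which is now an $\mathbb{F}_p$-vector space on which conjugation by $Q$ acts linearly. For each basis vector $v_i$ such that some $\alpha \in Q$ scales $v_i$ by a factor $c \in \mathbb{F}_p^\times \setminus \{1\}$, the equivariance relation reads $c \cdot \gamma(v_i) = \alpha^{-1}\gamma(v_i)\alpha$, an explicit $\mathbb{F}_p$-linear equation in $\gamma(v_i)$. Letting $c$ vary over enough scalars forces $\gamma(v_i) = 0$; the hypothesis $p \geq 5$ in cases (a), (c), (e) is precisely what guarantees enough distinct scaling factors to kill all components. For the remaining basis vectors, those fixed by every $\alpha \in Q$ (such as $v_2$ in case (a)), Lemma \ref{gamma lemma} finishes the job: the matrices in Propositions \ref{auto1'}--\ref{auto3} exhibit elements of $P$ realizing $v_i \mapsto v_i v_j$ for enough pairs $(i,j)$ that the vanishing of $\gamma$ propagates through every basis vector.

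The step I expect to be most delicate is Step 1 in cases (b), (d), (e), since $\GL_2(\mathbb{F}_p)$ has non-trivial $p$-elements, so ruling out a non-trivial projection of $\gamma(v_i)$ to $Q$ requires a genuine use of the equivariance rather than cardinality alone. Once that reduction is secured, the rest of the argument amounts to a bookkeeping exercise through the five cases using the explicit matrix descriptions already in hand.
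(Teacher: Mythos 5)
Your overall strategy --- reduce to $\gamma(V)\subseteq P$, kill the entries of $\gamma(v_i)$ by conjugating with elements of $Q$, and propagate the vanishing to the remaining generators via Lemma \ref{gamma lemma} --- is exactly the paper's. But three concrete points in your sketch are wrong or too weak as stated. First, in case (c) one has $Q\simeq\mathbb{F}_p^\times\times\GL_2(\mathbb{F}_p)$, whose order is divisible by $p$, so your claim that Step 1 is ``automatic'' there by coprimality is false; case (c) needs the same treatment as (b), (d), (e). The paper sidesteps all of this with one structural observation worth adopting: equivariance makes $\gamma(V)$ a \emph{normal} $p$-subgroup of $\Aut^c(\pi)$, and since $\Aut^c(\pi)/P\simeq Q$ has no non-trivial normal $p$-subgroup in any of the five cases, $\gamma(V)\subseteq P$ follows uniformly, with no need for the delicate $\SL_2$-commutation argument you anticipate.

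Second, $P$ is not elementary abelian in cases (d) and (e) (in case (d), for instance, it is a non-abelian group of class two), so Step 2 cannot literally be phrased as an $\mathbb{F}_p$-linear equation in the vector space $P$; one must compute $\gamma(v_i)^c$ and $\alpha^{-1}\gamma(v_i)\alpha$ as explicit matrices, as the paper does. Third, your dichotomy in Step 2 (either some $\alpha\in Q$ scales $v_i$ by $c\neq 1$, or $v_i$ is fixed by all of $Q$ and Lemma \ref{gamma lemma} applies) misses the mechanism actually needed in cases (b) and (d), where only $p\geq 3$ is assumed: there the decisive elements of $Q$ \emph{fix} $v_1$ (so $c=1$) but conjugate $P$ non-trivially --- e.g.\ the diagonal element with entries $1,s,s^{-1}$ in case (b) --- and it is this conjugation alone that forces the entries of $\gamma(v_1)$ to vanish. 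If you insist on $c\neq 1$, then for $p=3$ the only available scalar is $-1$, whose square is $1$, and your scaling equations degenerate. None of these problems is fatal to the strategy, but each requires a genuine repair before the argument closes.
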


\begin{proof}Let $\gamma : V\rightarrow\Aut^c(\pi)$ be an $\Aut^c(\pi)$-equivariant homomorphism and observe that $\gamma(V)$ must be a normal $p$-subgroup of $\Aut^c(\pi)$. But
 \[ Q \simeq \begin{cases}
\mathbb{F}_p^\times\times \mathbb{F}_p^\times &\mbox{in case (a)}\\
\GL_2(\mathbb{F}_p)&\mbox{in cases (b) and (e)}\\
\mathbb{F}_p^\times \times \GL_2(\mathbb{F}_p)&\mbox{in cases (c) and (d)}
\end{cases}\]
has no non-trivial normal $p$-subgroup. Since $\Aut^c(\pi) = P\rtimes Q$, we see that $\gamma(V)$ must lie inside $P$. We now deal with each case separately.
\begin{enumerate}[label=(\alph*), wide=0pt]
\item It is clear from Proposition \ref{auto1'} that
\[ v_1^{\alpha_{12}} = v_1v_2\]
for some $\alpha_{12} \in P$, and so it is enough to show that $\gamma(v_1)=\gamma(v_3)=1$ by Lemma \ref{gamma lemma}. Let us put
\[ \gamma(v_1) = \begin{bmatrix}
1 & b_1 & 0 \\
0 & 1 & 0 \\
0 & c_1 & 1
\end{bmatrix}\mbox{ and }\gamma(v_3)= \begin{bmatrix}
1 & b_3 & 0 \\
0 & 1 & 0 \\
0 & c_3 & 1
\end{bmatrix}.\]
From $\gamma(v_1^\alpha) = \gamma(v_1)^\alpha$ for $\alpha\in Q$ of the shape
\[ \alpha = \begin{bmatrix}s & 0 & 0 \\
0 & 1 & 0\\
0 & 0 & s\end{bmatrix} \mbox{ with } s\in \mathbb{F}_p^\times,\]
we get that $\gamma(v_1^\alpha) = \gamma(v_1)^s$ and
\[  \begin{bmatrix}
1 & sb_1 & 0 \\
0 & 1 & 0 \\
0 & sc_1 & 1
\end{bmatrix}= \begin{bmatrix}
1 & s^{-1}b_1 & 0 \\
0 & 1 & 0 \\
0 & s^{-1}c_1 & 1
\end{bmatrix}.\]
Since $p\geq 5$, there exists $s\in \mathbb{F}_p^\times$ with $s^2\neq 1$, and so $b_1=c_1=0$. We may obtain $b_3 = c_3 =0$ by the exact same calculation.
\item It is clear from Proposition \ref{auto2'} that
\[ v_1^{\alpha_{12}} = v_1v_2\mbox{ and } v_1^{\alpha_{13}} = v_1v_3\]
for some $\alpha_{12},\alpha_{13} \in P$, so it suffices to show that $\gamma(v_1)=1$ by Lemma \ref{gamma lemma}. Let us put
\[ \gamma(v_1) = \begin{bmatrix}1 & b_1 & c_1\\0& 1 & 0 \\ 0 & 0 & 1\end{bmatrix}.\]
From $\gamma(v_1^\alpha) = \gamma(v_1)^\alpha$ for $\alpha\in Q$ of the shape
\[\begin{bmatrix}
1 & 0 & 0\\
0 & s & 0\\
0 & 0 &s^{-1}
\end{bmatrix}\mbox{ with }s\in\mathbb{F}_p^\times,\]
we get that $\gamma(v_1^\alpha) = \gamma(v_1)$ and
\[  \begin{bmatrix}1 & b_1 & c_1\\0& 1 & 0 \\ 0 & 0 & 1\end{bmatrix}
=  \begin{bmatrix}1 & sb_1 & s^{-1}c_1\\0& 1 & 0 \\ 0 & 0 & 1\end{bmatrix}.\]
This yields $b_1=c_1=0$.
\item It is clear from Proposition \ref{auto1} that
\[ v_1^{\alpha_{12}} = v_1v_2\mbox{ and }v_3^{\alpha_{34}} = v_3v_4\]
for some $\alpha_{12}\in P, \alpha_{34}\in Q$, so it suffices to show that $\gamma(v_1)=\gamma(v_3)=1$ by Lemma \ref{gamma lemma}. Let us put
\[ \gamma(v_1) = \begin{bmatrix}
1 & b_1 & 0 & 0\\
0 & 1 & 0 & 0\\
0 & c_1 & 1 & 0\\
0 & d_1 & 0 & 1
\end{bmatrix}\mbox{ and }
 \gamma(v_3) = \begin{bmatrix}
1 & b_3 & 0 & 0\\
0 & 1 & 0 & 0\\
0 & c_3 & 1 & 0\\
0 & d_3 & 0 & 1
\end{bmatrix}.\]
From $\gamma(v_1^\alpha) = \gamma(v_1)^\alpha$ for $\alpha\in  Q$ of the shape
\[ \alpha =\begin{bmatrix}
s & 0 & 0 &0\\
0 & 1 & 0 & 0\\
0 & 0 & s & 0\\
0 & 0 & 0 & s
\end{bmatrix} \mbox{ with } s\in \mathbb{F}_p^\times,\]
we get that $\gamma(v_1^\alpha) = \gamma(v_1)^s$ and
\[ \begin{bmatrix}
1 & sb_1 & 0 & 0\\
0 & 1 & 0 & 0\\
0 & sc_1 &1 & 0\\
0 & sd_1 & 0 & 1
\end{bmatrix} = \begin{bmatrix}
1 & s^{-1}b_1 & 0 & 0\\
0 & 1 & 0 & 0\\
0 & s^{-1}c_1 &1 & 0\\
0 & s^{-1}d_1 & 0 & 1
\end{bmatrix} .\]
Since $p\geq 5$, there exists $s\in \mathbb{F}_p^\times$ with $s^2\neq 1$, and so $b_1=c_1=d_1=0$. We may obtain $b_3=c_3=d_3=0$ by the exact same calculation.
\item It is clear from Proposition \ref{auto2} that
\[ v_1^{\alpha_{12}} = v_1v_2,\,\ v_1^{\alpha_{13}} = v_1v_3,\,\ v_1^{\alpha_{14}} = v_1v_4.\]
for some $\alpha_{12},\alpha_{13},\alpha_{14}\in P$, and so it suffices to show that $\gamma(v_1)=1$ by Lemma \ref{gamma lemma}. Let us put
\[ \gamma(v_1) = \begin{bmatrix}
1 & b_1 & c_1 & e_1\\
0 & 1 & d_1 & f_1\\
0 & 0 & 1 & 0\\
0 & 0 & 0 & 1
\end{bmatrix}.\]
From $\gamma(v_1^\alpha) = \gamma(v_1)^\alpha$ for $\alpha\in \Aut^c(\pi)$ of the shape
\[ \alpha =\begin{bmatrix}
1 & 0 & 0 & 0\\
0 & 1 & g & 0\\
0 & 0 & s & 0\\
0 & 0 & 0 & s^{-1}
\end{bmatrix} \mbox{ with  } s\in \mathbb{F}_p^\times\mbox{ and }g\in \mathbb{F}_p,\]
we get that $\gamma(v_1^\alpha) = \gamma(v_1)$ and
\[ \begin{bmatrix}
1 & b_1 & c_1 & e_1\\
0 & 1 & d_1 & f_1\\
0 & 0 &1 & 0\\
0 & 0 & 0 & 1
\end{bmatrix} = \begin{bmatrix}
1 & b_1 & gb_1 + sc_1 & s^{-1}e_1\\
0 & 1 & sd_1 & s^{-1}f_1\\
0 & 0 & 1 & 0\\
0 & 0 & 0 &1 
\end{bmatrix}.\]
This yields $b_1 = c_1 = d_1=e_1=f_1=0$. 
\item It is clear from Proposition \ref{auto3} that
\[v_1^{\alpha_{12}} = v_1v_2,\,\ v_1^{\alpha_{13}} = v_1v_3,\,\ v_1^{\alpha_{14}} = v_1v_4\]
for some $\alpha_{12},\alpha_{13},\alpha_{14}\in P$, and so it suffices to show that $\gamma(v_1)=1$ by Lemma \ref{gamma lemma}. Let us put
\[ \gamma(v_1) = \begin{bmatrix}
1 & b_1 & -d_1 & c_1\\
0 & 1 & 0 & 0\\
0 & c_1 & 1 & 0\\
0 & d_1 & 0 & 1
\end{bmatrix}.\]
From $\gamma(v_1^\alpha) = \gamma(v_1)^\alpha$ for $\alpha\in Q$ of the shape
\[ \alpha =\begin{bmatrix}
s& 0 & 0 & 0\\
0 & 1 & 0 & 0\\
0 & 0 & s & 0\\
0 & 0 & 0 &1
\end{bmatrix} \mbox{ with } s\in \mathbb{F}_p^\times,\]
we get that $\gamma(v_1^\alpha) = \gamma(v_1)^{s}$ and
\[\begin{bmatrix}
1 & sb_1 & -sd_1 & sc_1\\
0 & 1 & 0 & 0\\
0 & sc_1 & 1 & 0\\
0 & sd_1 & 0 & 1
\end{bmatrix}= \begin{bmatrix}
1 & s^{-1}b_1 & -d_1 & s^{-1}c_1\\
0 & 1 & 0 & 0\\
0 & s^{-1}c_1 & 1 & 0\\
0 & d_1 & 0 & 1\end{bmatrix}.\]
This implies that $d_1=0$. Since $p\geq 5$, there exists $s\in \mathbb{F}_p^\times$ with $s^2\neq 1$, and we see that $b_1=c_1=0$ as well.
\end{enumerate}
In all cases, we have shown that $\gamma$ is trivial.
 \end{proof}
 
 Therefore, we may apply Theorem \ref{pre thm} to obtain
 \begin{equation}\label{T(G)} T(G) \simeq S \rtimes \res(\mathcal{S}').\end{equation}
It remains to determine the structure of $S$ and $\res(\mathcal{S}')$.

 \subsection{A module-theoretic approach} 
 
Observe that by the universal property of $S^2V$, the symmetric square of $V$, there is a natural correspondence between
\begin{itemize}
\item symmetric bilinear forms $V\times V\rightarrow\Lambda^2V$,
\item linear maps $S^2V\rightarrow \Lambda^2V$.
\end{itemize}
Similarly, there is a natural correspondence between
\begin{itemize}
\item anti-symmetric bilinear forms $V\times V\rightarrow\Lambda^2V$,
\item linear maps $\Lambda^2V\rightarrow \Lambda^2V$.
\end{itemize}
Since we are writing addition in $V$ multiplicatively, let us denote multiplication in $S^2V$ by $*$ to avoid confusion. Then, both $S^2V$ and $\Lambda^2V$ are naturally $\Aut^c(\pi)$-modules via the action
\[ (u* v)^{\alpha} = u^\alpha * v^\alpha\mbox{ and }(u\wedge v)^\alpha = u^\alpha \wedge v^\alpha\]
for all $u,v\in V$ and $\alpha\in \Aut^c(\pi)$. Taking (\ref{Delta3}) into account, it follows that elements of $S$ and $S'$, respectively, correspond to $\Aut^c(\pi)$-module homomorphisms $S^2V\rightarrow \Lambda^2V$ and $\Lambda^2V\rightarrow\Lambda^2V$.

Let us first restrict the action to $Q$. An $\Aut^c(\pi)$-module homomorphism is in particular a $Q$-module homomorphism. The latter is easier to understand because matrices in $Q$ are all block diagonal, and so we easily see that both $S^2V$ and $\Lambda^2V$, as $Q$-modules, are decomposable as a direct sum of irreducible submodules. In the tables below, we list a basis for each irreducible component, and we indicate the action of an arbitrary $\alpha\in Q$ in matrix form with respect to the given basis. Here
\[ \alpha = \begin{bmatrix} s & 0 & 0 \\ 0 & 1 & 0 \\ 0 & 0 &t\end{bmatrix},\begin{bmatrix}
|A| &  \begin{matrix} 0 & 0 \end{matrix}\\
 \begin{matrix} 0 \\ 0 \end{matrix} & A
\end{bmatrix}\]
in cases (a),(b), respectively, while 
\[ \alpha = \begin{bmatrix}
s & 0 & \begin{matrix} 0 & 0 \end{matrix}\\
0 & 1 & \begin{matrix} 0 & 0 \end{matrix}\\
\begin{matrix} 0 \\ 0 \end{matrix} & \begin{matrix} 0 \\ 0 \end{matrix} & A
\end{bmatrix},\begin{bmatrix}
|A| & 0 & \begin{matrix} 0 & 0 \end{matrix}\\
0 & s & \begin{matrix} 0 & 0 \end{matrix}\\
\begin{matrix} 0 \\ 0 \end{matrix} & \begin{matrix} 0 \\ 0 \end{matrix} & A
\end{bmatrix},\begin{bmatrix}
|A| & 0 & \begin{matrix} 0 & 0 \end{matrix}\\
0 & 1 & \begin{matrix} 0 & 0 \end{matrix}\\
\begin{matrix} 0 \\ 0 \end{matrix} & \begin{matrix} 0 \\ 0 \end{matrix} & A
\end{bmatrix}\]
in cases (c),(d),(e), respectively. The variables $s,t$ here range over $\mathbb{F}_p^\times$, and $A$ ranges over $\GL_2(\mathbb{F}_p)$.

 \begingroup
\setlength{\tabcolsep}{10pt} 
\renewcommand{\arraystretch}{1.15}
 \begin{center}
 \begin{longtable}{ |c|c|}
 \hline
\multicolumn{2}{|c|}{\textbf{Components of $S^2V$ in case (a)}} \\
\hline
 Basis & Action of $\alpha\in Q$\\ \hline
 $v_1*v_1 $ & $s^2$ \\ 
 $v_1*v_2$ & $s$ \\ 
 $v_1*v_3$ & $st$ \\ 
 $v_2*v_2$ & $1$\\
 $v_2*v_3$ & $t$ \\
 $v_3*v_3$ & $t^2$\\
\hline\hline
\multicolumn{2}{|c|}{\textbf{Components of $\Lambda^2V$ in case (a)}}\\
\hline
 Basis & Action of $\alpha\in Q$ \\ \hline
 $v_1\wedge v_2 $ & $s$\\ 
 $v_1\wedge v_3$ & $st$  \\ 
 $v_2\wedge v_3$ & $t$\\ 
\hline
\end{longtable} 
  \begin{longtable}{ |c|c|}
 \hline
\multicolumn{2}{|c|}{\textbf{Components of $S^2V$ in case (b)}}\\
\hline
 Basis & Action of $\alpha\in Q$  \\ \hline
 $v_1*v_1 $ & $|A|^2$ \\ 
 $v_1*v_2,v_1*v_3$ & $|A|A$  \\ 
 $v_2*v_2, v_2*v_3,v_3*v_3$ & omitted  \\ 
\hline
\hline
\multicolumn{2}{|c|}{\textbf{Components of $\Lambda^2V$ in case (b)}}\\
\hline
 Basis & Action of $\alpha\in Q$ \\ \hline
 $v_1\wedge v_2 ,v_1\wedge v_3$ & $|A|A$  \\ 
 $v_2\wedge v_3$ & $|A|$  \\ 
\hline
\end{longtable}
 \begin{longtable}{ |c|c| }
 \hline
\multicolumn{2}{|c|}{\textbf{Components of $S^2V$ in case (c)}}\\
\hline
 Basis & Action of $\alpha\in Q$ \\ \hline
 $v_1*v_1 $ & $s^2$ \\ 
 $v_1*v_2$ & $s$ \\ 
 $v_1*v_3,v_1*v_4$ & $sA$  \\ 
 $v_2*v_2$ & $1$ \\
 $v_2*v_3,v_2*v_4$ & $A$ \\
 $v_3*v_3,v_3*v_4,v_4*v_4$ & omitted \\
\hline
\hline
\multicolumn{2}{|c|}{\textbf{Components of $\Lambda^2V$ in case (c)}} \\
\hline
 Basis & Action of $\alpha\in Q$  \\ \hline
 $v_1\wedge v_2 $ & $s$  \\ 
 $v_1\wedge v_3,v_1\wedge v_4$ & $sA$  \\ 
 $v_2\wedge v_3,v_2 \wedge v_4$ & $A$  \\ 
 $v_3\wedge v_4$ & $|A|$ \\
\hline
\end{longtable}
 \begin{longtable}{ |c|c| }
 \hline
\multicolumn{2}{|c|}{\textbf{Components of $S^2V$ in case (d)}}\\
\hline 
Basis & Action of $\alpha\in Q$ \\ \hline
 $v_1*v_1 $ & $|A|^2$ \\ 
 $v_1*v_2$ & $s|A|$  \\ 
 $v_1*v_3,v_1*v_4$ & $|A|A$  \\ 
 $v_2*v_2$ & $s^2$\\
 $v_2*v_3,v_2*v_4$ & $sA$  \\
 $v_3*v_3,v_3*v_4,v_4*v_4$ & omitted \\
\hline
\hline
\multicolumn{2}{|c|}{\textbf{Components of $\Lambda^2V$ in case (d)}}\\
\hline 
Basis & Action of $\alpha\in Q$  \\ \hline
 $v_1\wedge v_2 $ & $s|A|$ \\ 
 $v_1\wedge v_3,v_1\wedge v_4$ & $|A|A$  \\ 
 $v_2\wedge v_3,v_2 \wedge v_4$ & $sA$  \\ 
 $v_3\wedge v_4$ & $|A|$ \\
\hline
\end{longtable}
 \begin{longtable}{ |c|c| }
 \hline
\multicolumn{2}{|c|}{\textbf{Components of $S^2V$ in (e)}}\\
\hline 
Basis & Action of $\alpha\in Q$ \\ \hline
 $v_1*v_1 $ & $|A|^2$ \\ 
 $v_1*v_2$ & $|A|$  \\ 
 $v_1*v_3,v_1*v_4$ & $|A|A$  \\ 
 $v_2*v_2$ & $1$ \\
 $v_2*v_3,v_2*v_4$ & $A$  \\
 $v_3*v_3,v_3*v_4,v_4*v_4$ & omitted \\
\hline
\hline
\multicolumn{2}{|c|}{\textbf{Components of $\Lambda^2V$ in case (e)}}\\
\hline 
Basis & Action of $\alpha\in Q$  \\ \hline
 $v_1\wedge v_2 $ & $|A|$ \\ 
 $v_1\wedge v_3,v_1\wedge v_4$ & $|A|A$  \\ 
 $v_2\wedge v_3,v_2 \wedge v_4$ & $A$  \\ 
 $v_3\wedge v_4$ & $|A|$  \\
\hline
\end{longtable} 
\end{center} 
\endgroup

Under a $Q$-module homomorphism, an irreducible component of the domain either lies in the kernel or gets mapped to an isomorphic irreducible component of the codomain. From the stated action of $Q$, we can easily compare the isomorphism classes of the irreducible components of $S^2V$ and $\Lambda^2V$. Note that the omitted action does not matter because $\Lambda^2V$ does not have any $3$-dimensional irreducible component. The next two propositions are then immediate. 
 
 \begin{prop}\label{prelim prop sym}For any $\Delta\in S$, the following holds.
 \begin{enumerate}[label= $(\arabic*)$]
 \item In case (a), we have
\begin{align*}\Delta(v_1,v_1)&=1,\\
\Delta(v_2,v_2) &=1,\\
 \Delta(v_3,v_3)&=1.
\end{align*}
\item In case (b), we have
\begin{align*}
\Delta(v_1,v_1)& = 1,\\
\Delta(v_2,v_2) &= \Delta(v_2,v_3) =\Delta(v_3,v_3)=1.
\end{align*}
\item In cases (c),(d), and (e), we have
\begin{align*}
\Delta(v_1,v_1) &=1,\\
 \Delta(v_2,v_2) &=1,\\
 \Delta(v_3,v_3) &= \Delta(v_3,v_4)=\Delta(v_4,v_4) =1.
 \end{align*}
 \end{enumerate}
 \end{prop}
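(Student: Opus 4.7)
The plan is to exploit the module-theoretic dictionary established just before the proposition. A symmetric bilinear form $\Delta \in S$ corresponds to an $\Aut^c(\pi)$-module homomorphism $\varphi_\Delta : S^2V \to \Lambda^2V$, which in particular is a $Q$-module homomorphism. Since $Q$ is a product of $\mathbb{F}_p^\times$'s and $\GL_2(\mathbb{F}_p)$-factors, its action is semisimple, and the tables above already realize both $S^2V$ and $\Lambda^2V$ as explicit direct sums of irreducible $Q$-submodules. Schur's lemma then forces each irreducible summand of $S^2V$ either to lie in $\ker \varphi_\Delta$ or to map isomorphically onto an irreducible summand of $\Lambda^2V$ of the same isomorphism type.

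The body of the proof therefore consists of a case-by-case comparison of the $Q$-isomorphism classes appearing in $S^2V$ against those appearing in $\Lambda^2V$, singling out the summands of $S^2V$ that have no counterpart in $\Lambda^2V$. These summands lie in $\ker \varphi_\Delta$, and via the correspondence $\Delta(u,v) \leftrightarrow \varphi_\Delta(u * v)$ this translates directly into the vanishing relations asserted in (1), (2), (3).

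For the one-dimensional summands $\langle v_i * v_i\rangle$ the comparison is immediate. In case (a), for instance, $\langle v_1 * v_1\rangle$ carries the character $(s,t)\mapsto s^2$ of $Q\simeq \mathbb{F}_p^\times\times\mathbb{F}_p^\times$, while the 1-dimensional constituents of $\Lambda^2V$ carry $s$, $st$, $t$; specialising $t=1$ and varying $s$ shows $s^2$ is not among these, hence $\Delta(v_1,v_1)=1$. The analogous check—choosing $\alpha\in Q$ so as to separate characters—dispatches $\Delta(v_2,v_2)$ and $\Delta(v_3,v_3)$ in case (a), as well as the $\Delta(v_i,v_i)$ entries in cases (b)-(e), where the relevant characters involve $|A|^k$, $s^k$, and $s^j|A|^k$ for suitable exponents.

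The main obstacle is the three-dimensional summand of $S^2V$ spanned by $\{v_2 * v_2, v_2 * v_3, v_3 * v_3\}$ in case (b), and by $\{v_3 * v_3, v_3 * v_4, v_4 * v_4\}$ in cases (c)-(e), whose $Q$-action is a twist of the symmetric square of the standard representation of $\GL_2(\mathbb{F}_p)$. For $p$ odd this representation is irreducible, and inspecting the tables shows that $\Lambda^2V$ contains no irreducible $Q$-summand of dimension three in any of the cases. Consequently this three-dimensional summand must lie in $\ker \varphi_\Delta$, yielding the remaining equalities $\Delta(v_2,v_2)=\Delta(v_2,v_3)=\Delta(v_3,v_3)=1$ in case (b) and $\Delta(v_3,v_3)=\Delta(v_3,v_4)=\Delta(v_4,v_4)=1$ in cases (c)-(e). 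Once all these component-wise conclusions are assembled, the proposition is proved.
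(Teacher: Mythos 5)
Your argument is correct and is exactly the paper's: the proposition is derived there as an immediate consequence of comparing the isomorphism types of the irreducible $Q$-components of $S^2V$ and $\Lambda^2V$ listed in the tables, with the diagonal one-dimensional summands killed by the character comparison and the three-dimensional summand killed because $\Lambda^2V$ has no three-dimensional component. Your only addition is the (correct) justification that the three-dimensional piece is irreducible, being a twist of $\mathrm{Sym}^2$ of the standard $\GL_2(\mathbb{F}_p)$-module for odd $p$; just note that your blanket appeal to semisimplicity of the $Q$-action is unnecessary and in general false in defining characteristic --- what carries the argument is the explicit direct-sum decomposition into irreducibles recorded in the tables.
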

 
 \begin{prop}\label{prelim prop anti}
For any $\Delta\in S'$, the following holds.
 \begin{enumerate}[label= $(\arabic*)$]
\item In case (a), we have
\begin{align*}
\Delta(v_1,v_2) & \in \langle v_1\wedge v_2\rangle,\\
\Delta(v_1,v_3) & \in \langle v_1\wedge v_3\rangle,\\
\Delta(v_2,v_3) & \in \langle v_2\wedge v_3\rangle.
\end{align*}
\item In case (b), we have
\begin{align*}
\Delta(v_1,v_2),\Delta(v_1,v_3)& \in \langle v_1\wedge v_2,v_1\wedge v_3\rangle,\\
\Delta(v_2,v_3) & \in \langle v_2\wedge v_3\rangle.
\end{align*}
\item In cases (c) and (d), we have
\begin{align*}
 \Delta(v_1,v_2) & \in \langle v_1\wedge v_2\rangle,\\\
 \Delta(v_1,v_3),\Delta(v_1,v_4) & \in \langle v_1\wedge v_3, v_1\wedge v_4 \rangle,\\
 \Delta(v_2,v_3),\Delta(v_2,v_4) & \in \langle v_2\wedge v_3, v_2\wedge v_4 \rangle, \\
 \Delta(v_3,v_4) & \in \langle v_3\wedge v_4\rangle.
\end{align*}
\item In case (e), we have
\begin{align*}
 \Delta(v_1,v_2),\Delta(v_3,v_4) & \in \langle v_1\wedge v_2, v_3\wedge v_4\rangle,\\
 \Delta(v_1,v_3),\Delta(v_1,v_4) & \in \langle v_1\wedge v_3, v_1\wedge v_4 \rangle,\\
 \Delta(v_2,v_3),\Delta(v_2,v_4) & \in \langle v_2\wedge v_3, v_2\wedge v_4 \rangle.
\end{align*}
\end{enumerate}  
 \end{prop}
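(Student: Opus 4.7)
The plan is to exploit the module-theoretic reformulation sketched in the preceding paragraphs of the excerpt. By the universal property of the exterior square, every anti-symmetric bilinear form $\Delta: V\times V\to \Lambda^2V$ corresponds to a unique linear map $\tilde{\Delta}:\Lambda^2V\to\Lambda^2V$ with $\tilde{\Delta}(u\wedge v)=\Delta(u,v)$. Using $(u\wedge v)^{\hat{\alpha}}=u^\alpha\wedge v^\alpha$, the condition (\ref{Delta3}) translates exactly into the statement that $\tilde{\Delta}$ is $\Aut^c(\pi)$-equivariant for the action of $Q$ on $\Lambda^2V$ that is already displayed in the tables; in particular, $\tilde{\Delta}$ is a $Q$-module endomorphism of $\Lambda^2V$.

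Next, I would read off the $Q$-isotypic decomposition of $\Lambda^2V$ directly from the tables listed in the excerpt. Because a $Q$-module homomorphism of a semisimple module sends each isotypic component into itself, the image $\Delta(v_i,v_j)=\tilde{\Delta}(v_i\wedge v_j)$ must lie in the $\mathbb{F}_p$-span of those basis vectors $v_k\wedge v_\ell$ that belong to the same $Q$-isotype as $v_i\wedge v_j$. The proposition is thus reduced to recording, case by case, which of the listed $Q$-characters or $2$-dimensional $Q$-representations coincide.

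Inspecting the tables, in cases (a)--(d) every irreducible summand of $\Lambda^2V$ carries a pairwise non-isomorphic action of $Q$. For case (a) this is clear because the three characters $s$, $st$, $t$ are distinct as functions on the diagonal torus; in case (b) the $1$-dimensional character $|A|$ and the $2$-dimensional representation $|A|A$ are obviously non-isomorphic; and in cases (c),(d) the same sort of inspection (comparing dimensions, and evaluating at scalar matrices to separate $s$, $|A|$, $s|A|$ when relevant) shows that each summand is its own isotypic component. The constraints in items (1)--(3) then follow immediately.

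The only case requiring genuine attention is (e): there, the two $1$-dimensional summands $\langle v_1\wedge v_2\rangle$ and $\langle v_3\wedge v_4\rangle$ both carry the character $|A|$, so they merge into a single $2$-dimensional isotypic component $\langle v_1\wedge v_2,\, v_3\wedge v_4\rangle$, yielding the mixing relation in item (4). The remaining $2$-dimensional components $\langle v_1\wedge v_3,v_1\wedge v_4\rangle$ (with action $|A|A$) and $\langle v_2\wedge v_3,v_2\wedge v_4\rangle$ (with action $A$) are clearly non-isomorphic, which supplies the remaining constraints of (4). The only ``obstacle'' is thus the purely bookkeeping task of verifying these isomorphism classes of irreducible $Q$-representations, and the $|A|$-collision in case (e) is the sole nontrivial coincidence.
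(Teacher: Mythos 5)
Your proposal is correct and follows the paper's own argument essentially verbatim: identify $\Delta\in S'$ with a $Q$-equivariant endomorphism of $\Lambda^2V$, decompose $\Lambda^2V$ into the irreducible $Q$-summands listed in the tables, and note that such an endomorphism preserves isotypic components, with the only coincidence of isomorphism types being the two copies of the character $|A|$ on $\langle v_1\wedge v_2\rangle$ and $\langle v_3\wedge v_4\rangle$ in case (e). The paper leaves the comparison of isomorphism classes as ``immediate'' from the tables, whereas you spell out the verification (e.g.\ evaluating at scalar matrices), which is a harmless elaboration rather than a different route.
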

  
We may refine parts of Proposition \ref{prelim prop anti} as follows.

 \begin{prop}\label{scalar prop} For any $\Delta\in S'$, the following holds.
 \begin{enumerate}[label= $(\arabic*)$]
 \item In case (b), there exists $\lambda\in\mathbb{F}_p$ such that
 \[ \begin{cases}
 \Delta(v_1,v_2) = (v_1\wedge v_2)^\lambda,\\
 \Delta(v_1,v_3) = (v_1\wedge v_3)^\lambda.
 \end{cases}\]
 \item In cases (c),(d), and (e), there exist $\lambda_1,\lambda_2\in \mathbb{F}_p$ such that
\[\begin{cases}
\Delta(v_1,v_3) = (v_1\wedge v_3)^{\lambda_1} \\
\Delta(v_1,v_4) = (v_1\wedge v_4)^{\lambda_1}
\end{cases}\,\
\begin{cases}
\Delta(v_2,v_3) = (v_2\wedge v_3)^{\lambda_2},\\
\Delta(v_2,v_4) = (v_2\wedge v_4)^{\lambda_2}.
\end{cases}\]
 \end{enumerate}
  \end{prop}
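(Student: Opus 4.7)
The plan is to strengthen Proposition~\ref{prelim prop anti} by imposing the equivariance condition~(\ref{Delta3}) against further elements of $\Aut^c(\pi) = P \rtimes Q$. The key observation, read off from Propositions~\ref{auto2'},~\ref{auto1},~\ref{auto2}, and~\ref{auto3}, is that in each of the cases (b), (c), (d), (e), the subgroup $Q$ contains a copy of $\SL_2(\mathbb{F}_p)$ acting in the standard way on a distinguished pair of basis vectors---namely $\{v_2,v_3\}$ in case (b) and $\{v_3,v_4\}$ in cases (c), (d), (e)---while fixing $v_1$ (and, when needed, $v_2$ as well). Concretely, in case (b) one takes $A\in\SL_2(\mathbb{F}_p)$ so that $|A|=1$; in cases (c) and (e) one takes $A\in\GL_2(\mathbb{F}_p)$ with the auxiliary scalar parameter chosen to fix $v_1$; in case (d) one takes $A\in\SL_2(\mathbb{F}_p)$ together with $s=1$.

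For part~(1), the restriction of the map $v\mapsto \Delta(v_1,v)$ to $\langle v_2,v_3\rangle$ already takes values in $\langle v_1\wedge v_2,v_1\wedge v_3\rangle$ by Proposition~\ref{prelim prop anti}. Under the natural identification $v_k\leftrightarrow v_1\wedge v_k$, both spaces become the standard $\SL_2(\mathbb{F}_p)$-module, and the equivariance of $\Delta$ turns this restriction into an $\SL_2(\mathbb{F}_p)$-equivariant endomorphism of the standard representation. Since the latter is absolutely irreducible with endomorphism ring $\mathbb{F}_p$, Schur's lemma forces the endomorphism to be multiplication by some scalar $\lambda\in\mathbb{F}_p$, which is exactly the claim. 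As a hands-on alternative to Schur, it suffices to test equivariance against the unipotent matrix $A=\bigl[\begin{smallmatrix}1 & 0\\ 1 & 1\end{smallmatrix}\bigr]\in\SL_2(\mathbb{F}_p)$ and against a suitable non-trivial diagonal (or transposition-type) element of $Q$; these two choices together force the off-diagonal coefficients of $\Delta(v_1,v_k)$ in the basis $\{v_1\wedge v_2,v_1\wedge v_3\}$ to vanish and the diagonal coefficients to coincide.

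For part~(2), the argument is entirely parallel: the restriction of $\Delta(v_1,-)$ to $\langle v_3,v_4\rangle$ yields an $\SL_2(\mathbb{F}_p)$-equivariant map into $\langle v_1\wedge v_3,v_1\wedge v_4\rangle$, producing $\lambda_1$ by Schur's lemma, and a symmetric choice of parameters in $Q$ fixing $v_2$ instead of $v_1$ provides $\lambda_2$ from $\Delta(v_2,-)$ in the same way. The main obstacle I expect to face is purely the case-by-case bookkeeping of verifying that $Q$ indeed contains the required copies of $\SL_2(\mathbb{F}_p)$ with the necessary fixed basis vectors; this is immediate from inspection of the matrix descriptions in Section~\ref{group section}, but must be carried out in each of (b), (c), (d), (e) separately. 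Once that verification is complete, the rest of the proof collapses to the single representation-theoretic observation above.
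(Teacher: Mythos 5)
Your proposal is correct and follows essentially the same route as the paper: Proposition \ref{prelim prop anti} confines the relevant values of $\Delta$ to the $2$-dimensional irreducible components, equivariance under the copy of $\SL_2(\mathbb{F}_p)$ sitting inside $Q$ (which fixes $v_1$, resp.\ $v_1$ and $v_2$) makes the induced map an $\SL_2(\mathbb{F}_p)$-module endomorphism of the standard module, and the fact that only scalars centralize $\SL_2(\mathbb{F}_p)$ forces it to be multiplication by some $\lambda\in\mathbb{F}_p$. The only difference is cosmetic---the paper packages the map as an endomorphism $\delta$ of the component of $\Lambda^2V$ rather than as the partial map $\Delta(v_1,-)$---and your minor misdescription of case (e) (there one must take $|A|=1$ rather than adjust a separate scalar parameter) does not affect the argument.
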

 
 \begin{proof} Consider case (b). We know from Proposition \ref{prelim prop anti} that $\Delta$ has to induce a $Q$-module endomorphism 
 \[ \delta : \langle v_1\wedge v_2,v_1\wedge v_3\rangle \rightarrow \langle v_1\wedge v_2,v_1\wedge v_3 \rangle.\]
If $\delta$ is trivial, then simply take $\lambda=0$. If $\delta$ is non-trivial, then it has to be invertible because $\langle v_1\wedge v_2,v_1\wedge v_3\rangle$ is irreducible. Say $\delta$ is given by the matrix $M\in \GL_2(\mathbb{F}_p)$. But $M$ must commute with the action of $Q$ and observe that $Q$ restricts to an $\SL_2(\mathbb{F}_p)$-action on $\langle v_1\wedge v_2,v_1\wedge v_3\rangle$.
Since the only matrices that centralize $\SL_2(\mathbb{F}_p)$ are the scalar multiples of the identity, it follows that $M = \left[\begin{smallmatrix} \lambda& 0\\ 0 & \lambda\end{smallmatrix}\right]$
for some $\lambda\in\mathbb{F}_p^\times$. This proves (1), and the same argument may be applied to prove (2).\end{proof}

\subsection{Computation of $S$ and $S'$} We shall now compute $S$ and $S'$ by taking the action of $P$ into account.

First, notice that a symmetric bilinear form $\Delta : V\times V\rightarrow \Lambda^2V$ is uniquely determined by
  \[ \Delta(v_i,v_j)\mbox{ for }1\leq i \leq j \leq n.\]
The next observation shall also be useful.

\begin{lemma}\label{sym lemma}Let $\Delta \in S$ and let $1\leq i, j \leq n$. If
\begin{enumerate}[label = $(\arabic*)$]
\item $\Delta(v_i,v_i) = \Delta(v_j,v_j)=1$,
\item  $v_i^{\alpha }= v_iv_j$ for some $\alpha\in \Aut^c(\pi)$,
\end{enumerate}
then $\Delta(v_i,v_j) =\Delta(v_j,v_i)= 1$ also holds.
\end{lemma}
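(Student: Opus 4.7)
The plan is to exploit the equivariance condition (\ref{Delta3}) applied to the specific automorphism $\alpha$ with $v_i^\alpha = v_iv_j$, together with symmetry of $\Delta$ and the fact that $p$ is odd.

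First I would handle the trivial case $i=j$ (where the conclusion is immediate from hypothesis (1)) and assume $i\neq j$. Then I would apply (\ref{Delta3}) with $u=v=v_i$ to get
\[\Delta(v_iv_j,v_iv_j) = \Delta(v_i^\alpha,v_i^\alpha) = \Delta(v_i,v_i)^{\hat{\alpha}} = 1,\]
using hypothesis (1). Expanding the left-hand side by bilinearity (recalling that we are writing the additive operation in $V$ multiplicatively) yields
\[\Delta(v_i,v_i)\,\Delta(v_i,v_j)\,\Delta(v_j,v_i)\,\Delta(v_j,v_j) = 1.\]

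Next I would cancel the two ``diagonal'' terms using hypothesis (1) to reduce this to $\Delta(v_i,v_j)\Delta(v_j,v_i)=1$, and invoke $\Delta\in S$ to substitute $\Delta(v_j,v_i)=\Delta(v_i,v_j)$, obtaining $\Delta(v_i,v_j)^2 = 1$ in the abelian group $\Lambda^2V$. Since this group has exponent $p$ and $p$ is odd, $2$ is invertible modulo $p$, so $\Delta(v_i,v_j)=1$, and by symmetry $\Delta(v_j,v_i)=1$ as well.

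There is essentially no obstacle: the entire argument is a one-line application of bilinearity plus symmetry, with the odd prime hypothesis ensuring that the squaring map on $\Lambda^2V$ is injective. The only thing to be mindful of is the multiplicative notation for addition in $V$, so that $(v_iv_j, v_iv_j)$ genuinely expands into four terms via the bilinearity of $\Delta$.
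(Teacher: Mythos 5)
Your proof is correct and coincides with the paper's argument: apply the equivariance condition (\ref{Delta3}) to $\Delta(v_i,v_i)$ under the automorphism $\alpha$, expand $\Delta(v_iv_j,v_iv_j)$ by bilinearity, cancel the diagonal terms, use symmetry to obtain $\Delta(v_i,v_j)^2=1$, and conclude since $p$ is odd. The separate treatment of the case $i=j$ is a harmless extra precaution not present in the paper.
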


\begin{proof}By the hypothesis and the condition (\ref{Delta3}), we have
\begin{align*}
1 & = \Delta(v_i,v_i)^{\hat{\alpha}}\\
& = \Delta(v_i^{\alpha} ,v_i^{\alpha})\\
& = \Delta(v_iv_j,v_iv_j)\\
& = \Delta(v_i,v_i)\Delta(v_i,v_j)\Delta(v_j,v_i)\Delta(v_j,v_j)\\
&=\Delta(v_i,v_j)\Delta(v_j,v_i)\\
& = \Delta(v_i,v_j)^2,
\end{align*}
where the last equality holds because $\Delta$ is symmetric. Since $p$ is odd, we may take the square root and so $\Delta(v_i,v_j)=\Delta(v_j,v_i)=1$.
\end{proof}

\begin{prop}\label{S=1} We have $S=1$ in all cases (a),(b),(c),(d), and (e).
\end{prop}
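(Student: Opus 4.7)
The plan is to establish $\Delta(v_i,v_j)=1$ for all $1\le i\le j\le n$ case by case, leveraging Proposition~\ref{prelim prop sym}, Lemma~\ref{sym lemma}, and the explicit description $\Aut^c(\pi)=P\rtimes Q$ worked out in Propositions~\ref{auto1'}--\ref{auto3}. All diagonal entries $\Delta(v_i,v_i)$ already vanish by Proposition~\ref{prelim prop sym}, which also kills the off-diagonal $\Delta(v_2,v_3)$ in case~(b) and $\Delta(v_3,v_4)$ in cases~(c)--(e); so only the remaining off-diagonals need attention.

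The bulk of the work is a direct application of Lemma~\ref{sym lemma}, together with its trivial extension allowing $v_i^\alpha=v_iv_j^\lambda$ with $\lambda\neq 0$ (in the proof of the lemma, the identity $\Delta(v_i,v_j)^2=1$ simply becomes $\Delta(v_i,v_j)^{2\lambda}=1$, and $p$ odd still forces $\Delta(v_i,v_j)=1$). For each missing off-diagonal pair one inspects $P$ for an element of this form. Doing so handles every remaining pair in cases (b), (d), (e), as well as $(1,2),(2,3)$ in case (a) and $(1,2),(2,3),(2,4)$ in case (c). The pair $(1,3)$ in case (a) and the pairs $(1,3),(1,4)$ in case (c) are unreachable this way: every matrix in $\Aut^c(\pi)$ satisfies $a_{13}=a_{31}=0$ (plus $a_{14}=a_{41}=0$ in case (c)), so no $\alpha$ mixes $v_1$ with $v_3$ or $v_4$.

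For these residual entries, the plan is a two-stage module-theoretic argument. First, apply (\ref{Delta3}) with a diagonal $\alpha\in Q$ of shape $\mathrm{diag}(s,1,t)$ in case (a), and of shapes $\mathrm{diag}(s,1,t,1)$ and $\mathrm{diag}(s,1,1,t)$ in case (c). The identity $\Delta(v_1,v_j)^{\hat\alpha}=\Delta(v_1,v_j)^{st}$, compared against the $(s,t)$-weights of each basis wedge in $\Lambda^2V$, forces $\Delta(v_1,v_j)=(v_1\wedge v_j)^\mu$ for some $\mu\in\FF_p$, because $v_1\wedge v_j$ is the only basis wedge carrying the weight $st$. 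Second, pick $\alpha\in P$ sending $v_j$ to $v_jv_2$ (available in both cases) and apply (\ref{Delta3}) again. Using the already-established $\Delta(v_1,v_2)=1$ yields $(v_1\wedge v_j)^\mu=(v_1\wedge v_j)^\mu(v_1\wedge v_2)^\mu$, whence $\mu=0$.

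The delicate point, and essentially the only obstacle, is this weight analysis for the residual entries: one has to verify carefully that no basis wedge other than $v_1\wedge v_j$ shares the weight $st$ under the chosen diagonal $\alpha$, so that $\Delta(v_1,v_j)$ really collapses to a one-dimensional subspace before the final $P$-step is applied. Everything else reduces to bookkeeping against the explicit $P$ and $Q$ already computed in Section~\ref{group section}.
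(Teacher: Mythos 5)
Your proposal is correct and follows essentially the same route as the paper's proof: Proposition~\ref{prelim prop sym} and Lemma~\ref{sym lemma} dispose of all pairs except $(1,3)$ in case (a) and $(1,3),(1,4)$ in case (c), and for those residual entries you first pin $\Delta(v_1,v_j)$ down to $\langle v_1\wedge v_j\rangle$ by comparing $Q$-weights (the paper's comparison of irreducible $Q$-components of $S^2V$ and $\Lambda^2V$) and then kill the remaining scalar by applying (\ref{Delta3}) with an element of $P$. The only deviations are cosmetic choices of group elements (you fix $v_1$ and move $v_3$, the paper moves both; you treat $(1,4)$ in case (c) directly, the paper deduces it from $(1,3)$ via a $Q$-element), and your verification that no other basis wedge carries the weight $st$ checks out.
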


\begin{proof}Let $\Delta\in S$ be arbitrary. We consider each case separately.
\begin{enumerate}[label=(\alph*),wide=0pt]
\item It is clear from Proposition \ref{auto1'} that
\[ v_1^{\alpha_{12}} = v_1v_2\mbox{ and } v_3^{\alpha_{23}} = v_2v_3\]
for some $\alpha_{12},\alpha_{23}\in P$. We then have
\[ \Delta(v_i,v_j) = 1\mbox{ for all }1\leq i \leq j\leq 3\mbox{ with }(i,j)\neq (1,3) \]
by Proposition \ref{prelim prop sym} and Lemma \ref{sym lemma}. Comparing the irreducible components of $S^2V$ and $\Lambda^2V$ as $Q$-modules, we also see that
\[ \Delta(v_1,v_3) = (v_1\wedge v_3)^\lambda\]
for some $\lambda\in\mathbb{F}_p$. But consider the action of $\alpha\in P$ given by
\[ \alpha = \begin{bmatrix} 1 & 1 & 0 \\ 0 & 1 & 0 \\ 0 & 1 & 1\end{bmatrix}.\]
By the condition (\ref{Delta3}), we have
\begin{align*}
\Delta(v_1,v_3)^{\hat{\alpha}}  & = \Delta(v_1^\alpha,v_3^\alpha)\\
& = \Delta(v_1v_2,v_2v_3)\\
& = \Delta(v_1,v_2)\Delta(v_1,v_3)\Delta(v_2,v_2)\Delta(v_2,v_3)\\
& = \Delta(v_1,v_3).
\end{align*}
But the left hand side is equal to
\[(v_1v_2\wedge v_2v_3)^\lambda =  (v_1\wedge v_2)^\lambda  (v_2\wedge v_3)^\lambda\Delta(v_1,v_3).\]
It follows that $\lambda=0$ and so $\Delta(v_1,v_3)=1$ also holds.
\item It is clear from Proposition \ref{auto2'} that
\[ v_1^{\alpha_{12}} = v_1v_2\mbox{ and } v_1^{\alpha_{13}} = v_1v_3\]
for some $\alpha_{12},\alpha_{13}\in P$. We then have
\[ \Delta(v_i,v_j) = 1\mbox{ for all }1\leq i \leq j\leq 3 \]
by Proposition \ref{prelim prop sym} and Lemma \ref{sym lemma}. 
\item It is clear from Proposition \ref{auto1} that
\[ v_1^{\alpha_{12}} = v_1v_2,\, 
v_3^{\alpha_{23}} = v_2v_3,\, v_4^{\alpha_{24}} = v_2v_4\]
for some $\alpha_{12},\alpha_{23},\alpha_{24}\in P$. We then have
 \[ \Delta(v_i,v_j)=1\mbox{ for all }1\leq i \leq j \leq 4 \mbox{ with }(i,j)\not\in\{(1,3),(1,4)\}\]
by Proposition \ref{prelim prop sym} and Lemma \ref{sym lemma}. Comparing the irreducible components of $S^2V$ and $\Lambda^2V$ as $Q$-modules, we also see that
\[ \Delta(v_1,v_3),\Delta(v_1,v_4)\in \langle v_1\wedge v_3,v_1\wedge v_4\rangle\]
has to hold. Let us write
\[ \Delta(v_1, v_3) = (v_1\wedge v_3)^{\lambda}(v_1\wedge v_4)^{\kappa},\]
and consider the action of $\alpha_1\in P$ defined by
\[\alpha_1 =  \begin{bmatrix}1 & 1 & 0 & 0 \\
0 & 1 & 0 & 0\\
 0& 1 & 1 & 0\\
 0 & 0 & 0 & 1\end{bmatrix}.
 \]
Since $\Delta$ satisfies the condition (\ref{Delta3}), we get that
\begin{align*}
\Delta(v_1,v_3)^{\hat{\alpha}_1}
& = \Delta(v_1^{\alpha_1},v_3^{\alpha_1})\\
& = \Delta(v_1v_2,v_2v_3)\\
& =\Delta(v_1,v_2)\Delta(v_1,v_3)\Delta(v_2,v_2)\Delta(v_2,v_3)\\
& = \Delta(v_1,v_3).\end{align*}
But explicitly, the left hand side is given by
\[(v_1v_2\wedge v_2v_3)^\lambda (v_1v_2\wedge v_4)^{\kappa}  = (v_1\wedge v_2)^{\lambda} (v_2\wedge v_3)^\lambda(v_2\wedge v_4)^\kappa\Delta(v_1,v_3).\]
This shows that $\lambda = \kappa = 0$ and hence $\Delta(v_1,v_3) =1$. Since there exists $\alpha_2\in Q$ for which $v_1^{\alpha_2} = v_1$ and $v_3^{\alpha_2} = v_4$, we have
\[ 1 = \Delta(v_1,v_3)^{\hat{\alpha}_2} = \Delta(v_1^{\alpha_2},v_3^{\alpha_2} ) = \Delta(v_1,v_4).\]
We have thus shown that $\Delta(v_1,v_3) = \Delta(v_1,v_4)=1$ also holds.
\item It is clear from Proposition \ref{auto2} that 
\[ v_1^{\alpha_{12}} = v_1v_2,\,
v_1^{\alpha_{13}} = v_1v_3,\,
v_1^{\alpha_{14}} = v_1v_4,\,
v_2^{\alpha_{23}} = v_2v_3,\,
v_2^{\alpha_{24}} = v_2v_4\]
for some $\alpha_{12},\alpha_{13},\alpha_{14},\alpha_{23},\alpha_{24}\in P$. We then have
\[ \Delta(v_i,v_j) = 1\mbox{ for all }1\leq i \leq j\leq 4 \]
by Proposition \ref{prelim prop sym} and Lemma \ref{sym lemma}.  
\item It is clear from Proposition \ref{auto3} that
\[ v_1^{\alpha_{12}} = v_1v_2,\,
v_1^{\alpha_{13}} = v_1v_3,\,
v_1^{\alpha_{14}} = v_1v_4,\,
v_3^{\alpha_{23}} = v_2v_3,\,
v_4^{\alpha_{24}}= v_2v_4\]
for some $\alpha_{12},\alpha_{13},\alpha_{14},\alpha_{23},\alpha_{24}\in P$. We then have
\[ \Delta(v_i,v_j) = 1\mbox{ for all }1\leq i \leq j\leq 4 \]
by Proposition \ref{prelim prop sym} and Lemma \ref{sym lemma}.  
\end{enumerate}
In all cases, we have shown that $\Delta=1$, and so indeed $S=1$.
  \end{proof}
 
Next, note that an anti-symmetric bilinear form $\Delta :V \times V \rightarrow \Lambda^2V$ is uniquely determined by
\[ \Delta(v_i,v_j) \mbox{ for }1\leq i < j\leq n.\]
We also make the following observation.

\begin{lemma}\label{anti lemma}
Let $\Delta\in S'$ and let $1\leq i,j,k\leq n$ with $i\neq j,k$. If
\begin{enumerate}[label = $(\arabic*)$]
\item $\Delta(v_i,v_j) = (v_i\wedge v_j)^{\lambda_1}$ or equivalently $\Delta(v_j,v_i) = (v_j\wedge v_i)^{\lambda_1}$,
\item $\Delta(v_i,v_k) = (v_i\wedge v_k)^{\lambda_2}$ or equivalently $\Delta(v_k,v_i) = (v_k\wedge v_i)^{\lambda_2}$,
\item $v_i^\alpha = v_i,\, v_j^\alpha =v_jv_k$ for some $\alpha\in \Aut^c(\pi)$,\end{enumerate}
then $\lambda_1 = \lambda_2$ has to hold.
\end{lemma}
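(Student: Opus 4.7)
The approach is a direct calculation: apply the equivariance condition (\ref{Delta3}) to the pair $(v_i, v_j)$ using the automorphism $\alpha$ from hypothesis $(3)$, and expand both sides using bilinearity and the action of $\hat{\alpha}$ on wedge products.

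More concretely, the plan is first to compute the left-hand side $\Delta(v_i^\alpha, v_j^\alpha) = \Delta(v_i, v_j v_k)$ by bilinearity, obtaining $\Delta(v_i, v_j) \Delta(v_i, v_k) = (v_i \wedge v_j)^{\lambda_1} (v_i \wedge v_k)^{\lambda_2}$ thanks to hypotheses $(1)$ and $(2)$. Next I would compute the right-hand side $\Delta(v_i, v_j)^{\hat{\alpha}}$: using hypothesis $(1)$ and the definition of $\hat{\alpha}$ on $\Lambda^2 V$, this equals $((v_i \wedge v_j)^{\hat{\alpha}})^{\lambda_1} = (v_i^\alpha \wedge v_j^\alpha)^{\lambda_1} = (v_i \wedge v_j v_k)^{\lambda_1} = (v_i \wedge v_j)^{\lambda_1}(v_i \wedge v_k)^{\lambda_1}$.

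Equating the two expressions and cancelling the common factor $(v_i \wedge v_j)^{\lambda_1}$ yields $(v_i \wedge v_k)^{\lambda_2 - \lambda_1} = 1$. Since $i \neq k$, the element $v_i \wedge v_k$ is a non-trivial basis vector of $\Lambda^2 V$, hence has order $p$, and so $\lambda_1 \equiv \lambda_2 \pmod{p}$, which is the desired conclusion.

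There is no real obstacle here; this is a routine consistency check of the equivariance property (\ref{Delta3}) of $\Delta$, and the lemma essentially packages a calculation that will be invoked repeatedly when computing $S'$ in the various cases (a)--(e). The only thing to keep track of carefully is that hypothesis $(3)$ fixes $v_i$ (rather than moving it), which ensures that no extra terms of the form $\Delta(v_k, v_j)$ or $\Delta(v_k, v_k)$ appear on the left-hand side and spoil the comparison.
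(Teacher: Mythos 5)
Your proof is correct and is essentially identical to the paper's: both apply the equivariance condition (\ref{Delta3}) to the pair $(v_i,v_j)$ with the automorphism $\alpha$ from hypothesis $(3)$, expand $\Delta(v_i,v_jv_k)$ by bilinearity, and compare exponents of $v_i\wedge v_k$. The paper is slightly terser but the computation is the same.
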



\begin{proof}By the condition (\ref{Delta3}), we have
\[
\Delta(v_i,v_j)^{\hat{\alpha}} = \Delta(v_i^\alpha,v_j^\alpha)
= \Delta(v_i,v_jv_k)
=\Delta(v_i,v_j)\Delta(v_i,v_k). \]
Using the hypothesis, we rewrite this as
\[ (v_i\wedge v_j)^{\lambda_1}(v_i\wedge v_k)^{\lambda_1} = (v_i\wedge v_j)^{\lambda_1}( v_i\wedge v_k)^{\lambda_2},\] 
which implies that $\lambda_1 =\lambda_2$, as claimed.
\end{proof}

For each $\lambda\in\mathbb{F}_p$, as noted in Remark \ref{remark}, clearly
\[ \Delta_{[\lambda]} : V \times V\rightarrow \Lambda^2V;\,\ \Delta_{[\lambda]}(u,v) = (u\wedge v)^\lambda\]
is an anti-symmetric bilinear form satisfying (\ref{Delta3}), namely $\Delta_{[\lambda]}\in S'$.

\begin{prop}\label{S' prop}We have
\[ S' =\begin{cases}
  \{ \Delta_{[\lambda]} : \lambda\in \mathbb{F}_p\}&\mbox{in cases (a),(b),(c), and (d)},\\
  \{ \Delta_{[\lambda]}\Delta_{[\kappa]}^* : \lambda,\kappa\in \mathbb{F}_p\}&\mbox{in case (e)},
  \end{cases} \] 
where $\Delta_{[\kappa]}^* : V\times V\rightarrow \Lambda^2V$ denotes the anti-symmetric form defined by
\begin{align*}
\Delta_{[\kappa]}^*(v_1,v_2) & = (v_3\wedge v_4)^\kappa,&\Delta_{[\kappa]}^*(v_2,v_3) & = (v_2\wedge v_3)^{-\kappa},\\
\Delta_{[\kappa]}^*(v_1,v_3) & = (v_1\wedge v_3)^{-\kappa},&\Delta_{[\kappa]}^*(v_2,v_4)& = (v_2\wedge v_4)^{-\kappa},\\
\Delta_{[\kappa]}^*(v_1,v_4)& = (v_1\wedge v_4)^{-\kappa},&\Delta_{[\kappa]}^*(v_3,v_4) & = (v_1\wedge v_2)^{\kappa}.\end{align*}
\end{prop}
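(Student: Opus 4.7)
The plan is to take an arbitrary $\Delta \in S'$, apply the constraints already derived in Propositions \ref{prelim prop anti} and \ref{scalar prop} in order to reduce $\Delta$ to a small number of scalar parameters, and then use the equivariance condition (\ref{Delta3}) against carefully chosen elements of $P$ (with $Q$ as a backup when $P$ is too rigid) to force all remaining parameters to collapse, either to a single $\lambda$, or (in case (e)) to two parameters $\lambda,\kappa$. The reverse inclusion, namely that the described forms actually lie in $S'$, should follow by a direct verification on the generators of $\Aut^c(\pi)$.

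In cases (a), (c), (d), Lemma \ref{anti lemma} does most of the work. Proposition \ref{prelim prop anti} (refined by Proposition \ref{scalar prop} in (c), (d)) already writes $\Delta(v_i,v_j) = (v_i\wedge v_j)^{\lambda_{ij}}$ for each relevant pair, with the relations $\lambda_{13}=\lambda_{14}$ and $\lambda_{23}=\lambda_{24}$ in (c), (d). Whenever $\Aut^c(\pi)$ contains an element fixing $v_i$ and sending $v_j \mapsto v_j v_k$, Lemma \ref{anti lemma} yields $\lambda_{ij} = \lambda_{ik}$. Reading off explicit generators from Propositions \ref{auto1'}, \ref{auto1}, and \ref{auto2}, together with an obvious swap $v_3 \leftrightarrow v_4$ available in $Q$ in cases (c), (d), should link enough of the $\lambda_{ij}$ to collapse them all to a common $\lambda$, giving $\Delta = \Delta_{[\lambda]}$.

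Case (b) requires a small extra manoeuvre because $P$ fixes both $v_2$ and $v_3$, so Lemma \ref{anti lemma} cannot be used directly to link $\lambda_{23}$ to the common scalar $\lambda$ that Proposition \ref{scalar prop} attaches to $\Delta(v_1,v_2)$ and $\Delta(v_1,v_3)$. I will instead apply (\ref{Delta3}) to $\Delta(v_1,v_2)$ with the unipotent $\alpha\in P$ from Proposition \ref{auto2'} sending $v_1 \mapsto v_1 v_3$ (the generator with $c=1$): expanding $\Delta(v_1 v_3,\,v_2) = \Delta(v_1,v_2)\Delta(v_3,v_2)$ on one side and $(v_1 v_3\wedge v_2)^\lambda$ on the other yields the extra contribution $(v_3\wedge v_2)^{\lambda_{23}} = (v_3\wedge v_2)^\lambda$, forcing $\lambda_{23}=\lambda$.

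Case (e) is the main obstacle, since Proposition \ref{prelim prop anti} allows $\Delta(v_1,v_2)$ and $\Delta(v_3,v_4)$ to range over the same two-dimensional space $\langle v_1\wedge v_2,\,v_3\wedge v_4\rangle$, introducing four free parameters $a,b,c,d$ with two genuine off-diagonal terms; together with $\lambda_1,\lambda_2$ from Proposition \ref{scalar prop} we start with six unknowns. I plan to test (\ref{Delta3}) against the three unipotents from Proposition \ref{auto3} with, in turn, $c=1$, $d=1$, and $b=1$ (and the corresponding induced changes in $v_1$) applied to the various $\Delta(v_1,v_j)$ and to $\Delta(v_3,v_4)$. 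Each substitution should give a linear identity in $\Lambda^2V$ whose coefficients pin down $\lambda_1 = \lambda_2 = a - b$, $a=d$ and $b=c$, leaving exactly two free parameters; relabelling $\lambda = a = d$ and $\kappa = b = c$ and comparing with the explicit formulae defining $\Delta_{[\kappa]}^*$ will give $\Delta = \Delta_{[\lambda]}\Delta_{[\kappa]}^*$. For the reverse inclusion, the $\Delta_{[\lambda]}$ are $\GL(V)$-equivariant and therefore obviously in $S'$, while the equivariance of $\Delta_{[\kappa]}^*$ has to be checked directly on the generators in Proposition \ref{auto3}, using that the characters through which $Q$ acts on $\langle v_1\wedge v_2\rangle$ and on $\langle v_3\wedge v_4\rangle$ are both equal to $|A|$.
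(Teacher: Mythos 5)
Your proposal follows essentially the same route as the paper: reduce $\Delta$ to a handful of scalar parameters via Propositions \ref{prelim prop anti} and \ref{scalar prop}, collapse them to a single $\lambda$ (resp.\ to $\lambda,\kappa$ in case (e)) by imposing equivariance under explicit unipotent elements of $P$, and verify the reverse inclusion directly. One small remark: in case (b) your hand computation with the generator sending $v_1\mapsto v_1v_3$ is exactly Lemma \ref{anti lemma} applied with $i=2$, $j=1$, $k=3$ (the lemma only requires \emph{some} $v_i$ to be fixed while $v_j\mapsto v_jv_k$, so the fact that $P$ fixes $v_2$ and $v_3$ is precisely what makes it applicable), hence no extra manoeuvre is actually needed there.
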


\begin{proof}Let $\Delta\in S'$ be arbitrary. We consider each case separately.
\begin{enumerate}[label=(\alph*),wide=0pt]
\item[(a),(b)] By Propositions \ref{prelim prop anti} and \ref{scalar prop}, we know that
\begin{align*} \Delta(v_1,v_2) &= (v_1\wedge v_2)^{\lambda_1}\\ 
\Delta(v_1,v_3) &= (v_1\wedge v_3)^{\lambda_2}\\
\Delta(v_2,v_3) &= (v_2\wedge v_3)^{\lambda_3}
\end{align*}
for some $\lambda_1,\lambda_2,\lambda_3 \in \mathbb{F}_p$. In case (a), by Proposition \ref{auto1'}, we have
\[ \begin{cases}
v_1^{\alpha_{12}} = v_1\\
v_3^{\alpha_{12}} = v_2v_3
\end{cases}\,\ \begin{cases}
v_3^{\alpha_{23}} = v_3\\
v_1^{\alpha_{23}} = v_1v_2
\end{cases}\]
for some $\alpha_{12},\alpha_{23}\in P$. In case (b), we already know from Proposition  \ref{scalar prop} that $\lambda_1=\lambda_2$, and by Proposition \ref{auto2'}, we have
\[ \begin{cases}
v_3^{\alpha_{23}} = v_3\\
v_1^{\alpha_{23}} = v_1v_2
\end{cases}\]
for some $\alpha_{23}\in P$. In both cases, we get that
\[\lambda :=\lambda_1 = \lambda_2 = \lambda_3\]
by Lemma \ref{anti lemma}. This shows that $\Delta = \Delta_{[\lambda]}$, as claimed.
\item[(c),(d)] By Propositions \ref{prelim prop anti} and \ref{scalar prop}, we know that
\begin{align*} \Delta(v_1,v_2) &= (v_1\wedge v_2)^{\lambda_1}&\Delta(v_2,v_3) & = (v_2\wedge v_3)^{\lambda_3}\\ 
\Delta(v_1,v_3) &= (v_1\wedge v_3)^{\lambda_2} & \Delta(v_2,v_4) &= (v_2\wedge v_4)^{\lambda_3}\\
\Delta(v_1,v_4) &= (v_1\wedge v_4)^{\lambda_2}&\Delta(v_3,v_4) &= (v_3\wedge v_4)^{\lambda_4}
\end{align*}
for some $\lambda_1,\lambda_2,\lambda_3,\lambda_4 \in \mathbb{F}_p$. In case (c), by Proposition \ref{auto1}, we have
\[ \begin{cases}
v_1^{\alpha_{12}} = v_1\\
v_3^{\alpha_{12}} = v_2v_3
\end{cases}\,\
\begin{cases}
v_3^{\alpha_{23}} = v_3\\
v_1^{\alpha_{23}} = v_1v_2
\end{cases}
\,\
\begin{cases}
v_4^{\alpha_{34}} = v_4\\
v_3^{\alpha_{34}} = v_2v_3
\end{cases}\]
for some $\alpha_{12},\alpha_{23},\alpha_{34}\in P$. In case (d), by Proposition \ref{auto2}, we have
\[ \begin{cases}
v_1^{\alpha_{12}} = v_1\\
v_2^{\alpha_{12}} = v_2v_3
\end{cases}\,\
\begin{cases}
v_3^{\alpha_{23}} = v_3\\
v_1^{\alpha_{23}} = v_1v_2
\end{cases}
\,\
\begin{cases}
v_4^{\alpha_{34}} = v_4\\
v_2^{\alpha_{34}} = v_2v_3
\end{cases}\]
for some $\alpha_{12},\alpha_{23},\alpha_{34}\in P$. In both cases, we get that 
\[\lambda :=\lambda_1 = \lambda_2 = \lambda_3= \lambda_4\]
by Lemma \ref{anti lemma}. This shows that $\Delta = \Delta_{[\lambda]}$, as claimed.
\item[(e)] By Propositions \ref{prelim prop anti} and \ref{scalar prop}, we know that 
\begin{align*} \Delta(v_1,v_2) &= (v_1\wedge v_2)^{\lambda_1}(v_3\wedge v_4)^{\kappa_1}&\Delta(v_2,v_3) & = (v_2\wedge v_3)^{\lambda_3}\\ 
\Delta(v_1,v_3) &= (v_1\wedge v_3)^{\lambda_2} & \Delta(v_2,v_4) &= (v_2\wedge v_4)^{\lambda_3}\\
\Delta(v_1,v_4) &= (v_1\wedge v_4)^{\lambda_2}&\Delta(v_3,v_4) &= (v_1\wedge v_2)^{\kappa_4}(v_3\wedge v_4)^{\lambda_4}
\end{align*}
 for some $\lambda_1,\lambda_2,\lambda_3,\lambda_4,\kappa_1,\kappa_4\in \mathbb{F}_p$. Consider $\alpha\in P$ given by
\[ \alpha = \begin{bmatrix} 1 & 0 & 0 & 1\\
0 & 1 & 0 & 0\\
 0 & 1 & 1 & 0 \\ 
 0 & 0 & 0 & 1\end{bmatrix},\]
and we compute that
 \begin{align*}
\Delta(v_1,v_2)^{\hat{\alpha}}& = (v_1v_4\wedge v_2)^{\lambda_1}(v_2v_3\wedge v_4)^{\kappa_1} \\
&= \Delta(v_1,v_2)(v_4\wedge v_2)^{\lambda_1-\kappa_1},\\
\Delta(v_1^\alpha,v_2^\alpha) & = \Delta(v_1v_4,v_2) \\
&=\Delta(v_1,v_2)(v_4\wedge v_2)^{\lambda_3},\\
\Delta(v_1,v_3)^{\hat{\alpha}} & = (v_1v_4\wedge v_2v_3)^{\lambda_2} \\
&= \Delta(v_1,v_3)(v_1\wedge v_2)^{\lambda_2}(v_4\wedge v_2)^{\lambda_2}(v_4\wedge v_3)^{\lambda_2},\\
\Delta(v_1^\alpha,v_3^\alpha) & = \Delta(v_1v_4,v_2v_3) \\
&= \Delta(v_1,v_3)(v_1\wedge v_2)^{\lambda_1-\kappa_4}(v_4\wedge v_2)^{\lambda_3}(v_4\wedge v_3)^{\lambda_4-\kappa_1},\\
\Delta(v_3,v_4)^{\hat{\alpha}} & = (v_1v_4\wedge v_2)^{\kappa_4}(v_2v_3\wedge v_4)^{\lambda_4}\\
& = \Delta(v_3,v_4)(v_2\wedge v_4)^{\lambda_4-\kappa_4},\\
\Delta(v_3^\alpha,v_4^\alpha) & = \Delta(v_2v_3,v_4)\\
&= \Delta(v_3,v_4)(v_2\wedge v_4)^{\lambda_3}.
\end{align*}
Since the condition (\ref{Delta3}) has to hold, we deduce that
\[ \lambda_3= \lambda_1 - \kappa_1,\,\
\lambda_2 = \lambda_1-\kappa_4 =\lambda_3=\lambda_4-\kappa_1,\,\ \lambda_3 = \lambda_4-\kappa_4.\]
Solving this system of equations, we get that
\[ \lambda := \lambda_1 = \lambda_4 ,\,\ \kappa:=\kappa_1=\kappa_4,\,\ \lambda_2 =\lambda_3 = \lambda -\kappa.\]
This shows that $\Delta = \Delta_{[\lambda]}\Delta_{[\kappa]}^*$. Conversely, for any $\lambda,\kappa\in\mathbb{F}_p$, we know that $ \Delta_{[\lambda]}\in S'$ already and it is straightforward to check that $\Delta_{[\kappa]}^*$ also satisfies (\ref{Delta3}), so then $\Delta_{[\lambda]}\Delta_{[\kappa]}^*\in S'$.
 \end{enumerate}
 This completes the proof.
\end{proof}
   
\subsection{The structure of $T(G)$} We shall now prove Theorem \ref{thm1}. We already know from (\ref{T(G)}) and Proposition \ref{S=1} that
\[ T(G) \simeq \res(\mathcal{S}').\]
In cases (a),(b),(c), and (d), the theorem follows because we have
\[ \res(\mathcal{S}') \simeq \mathbb{F}_p^\times\]
by Remark \ref{remark} and Proposition \ref{S' prop}. In case (e), by Proposition \ref{S' prop}, the elements of $S'$  are precisely the bilinear forms
\[ \Delta_{[\sigma]}: V\times V\rightarrow\Lambda^2V ;\,\  \Delta_{[\sigma]}(u,v) = (u\wedge v)^\sigma.\]
Here $\sigma$ is any endomorphism on $\Lambda^2V$ of the form
\begin{equation}\label{tau}
 \begin{bmatrix}
\lambda & &  & &&\kappa\\
 & \lambda-\kappa & & &&\\
 & & \lambda-\kappa & & &\\
 & & & \lambda-\kappa & &\\
 & & & &\lambda-\kappa &\\
\kappa & & & &&\lambda
\end{bmatrix} \mbox{ with }\lambda,\kappa\in \mathbb{F}_p,\end{equation}
written with respect to the basis
\[ v_1\wedge v_2, v_1\wedge v_3, v_1\wedge v_4,v_2\wedge v_3,v_2\wedge v_4,v_3\wedge v_4\]
of $\Lambda^2V$. By \cite[Example 3.4]{LMH}, we know that $N_{\Delta_{[\sigma]}}\simeq G$ occurs only for $1+2\sigma\in \GL(\Lambda^2V)$. Let us make a change of variables $\tau = 1+2\sigma$, and consider $\tau_{\lambda,\kappa}\in \GL(\Lambda^2V)$ of the form (\ref{tau}) but with the restriction $\kappa\neq\pm\lambda$. Observe that then
\[
\eta_{\lambda,\kappa}   = \begin{bmatrix}
\lambda+\kappa &&&\\
&(\lambda+\kappa)^{-1} &&\\
&&1&\\
&&&1
\end{bmatrix},\]
written with respect to the basis $v_1,v_2,v_3,v_4$ of $V$, in which case
\[\hat{\eta}_{\lambda,\kappa} = \begin{bmatrix}
1 &&&&&\\
 &\lambda+\kappa&&&&\\
 &&\lambda+\kappa&&&\\
 &&&(\lambda+\kappa)^{-1}&&\\
 &&&&(\lambda+\kappa)^{-1}&\\
 &&&&&1
\end{bmatrix},\]
yields a solution to $\pi\hat{\eta}_{\lambda,\kappa}\tau_{\lambda,\kappa} = \eta_{\lambda,\kappa}\pi$. From (\ref{S'}), we deduce that
\[ \res(\mathcal{S}') \simeq \{(\eta_{\lambda,\kappa},\hat{\eta}_{\lambda,\kappa}\tau_{\lambda,\kappa}) : \lambda,\kappa\in\mathbb{F}_p\mbox{ with }\kappa\neq\pm\lambda\}.  \]
It is straightforward to verify that
\[ \eta_{\lambda_{1},\kappa_1} \eta_{\lambda_{2},\kappa_{2}} = \eta_{\lambda,\kappa},\,\
 \hat{\eta}_{\lambda_1,\kappa_1}\tau_{\lambda_1,\kappa_1}\hat{\eta}_{\lambda_2,\kappa_2}\tau_{\lambda_2,\kappa_2} = \hat{\eta}_{\lambda,\kappa}\tau_{\lambda,\kappa}\]
 for any $\lambda_1,\lambda_2,\kappa_1,\kappa_2\in \mathbb{F}_p$ with $\kappa_1\neq\pm\lambda_1$ and $\kappa_2\neq\pm\lambda_2$, where
 \[\begin{bmatrix} \lambda & \kappa\\ \kappa &\lambda \end{bmatrix}= \begin{bmatrix} \lambda_1 &\kappa_1\\\kappa_1&\lambda_1\end{bmatrix}
 \begin{bmatrix}\lambda_2& \kappa_2\\ \kappa_2 & \lambda_2\end{bmatrix}
 =\begin{bmatrix} \lambda_1\lambda_2 + \kappa_1\kappa_2 & \lambda_1\kappa_2 +\lambda_2\kappa_1\\\lambda_1\kappa_2 +\lambda_2\kappa_1&\lambda_1\lambda_2 + \kappa_1\kappa_2 \end{bmatrix}.\]
It follows that $\res(\mathcal{S}')$ is isomorphic to the subgroup
\[ \left\{ \begin{bmatrix}\lambda & \kappa \\ \kappa & \lambda\end{bmatrix}  : \lambda,\kappa\in\mathbb{F}_p\mbox{ with }\kappa\neq\pm\lambda\right\}\]
of $\GL_2(\mathbb{F}_p)$, or conjugating by $\left[\begin{smallmatrix}1 & -1\\ 1 & 1 \end{smallmatrix}\right]$, the subgroup
\[ \left\{ \begin{bmatrix}\lambda + \kappa& 0 \\  0 & \lambda- \kappa\end{bmatrix}  : \lambda,\kappa\in\mathbb{F}_p\mbox{ with }\kappa\neq\pm\lambda\right\}\]
of $\GL_2(\mathbb{F}_p)$. This decomposes as
\[ \left\{ \begin{bmatrix} \lambda & 0 \\ 0 & 1 \end{bmatrix}: \lambda\in \mathbb{F}_p^\times \right\}\times  \left\{ \begin{bmatrix} 1 & 0 \\ 0 & \kappa \end{bmatrix}: \kappa\in \mathbb{F}_p^\times \right\}\]
and so is isomorphic to $\mathbb{F}_p^\times \times \mathbb{F}_p^\times$, as claimed in (e).

\bibliographystyle{amsalpha}
 
\bibliography{Refs}

\end{document}